\newtheorem{thm}{Theorem}
\newtheorem{prp}{Proposition}
\newtheorem{cor}{Corollary}
\newtheorem{lem}{Lemma}
\newtheorem*{thmA}{Theorem A}
\newcommand{\dC}{\mathbb C}                             
\newcommand{\dR}{\mathbb R}                             
\newcommand{\dN}{\mathbb N}                             
\newcommand{\dZ}{\mathbb Z}                             
\newcommand{\ra}{\rightarrow}                           
\newcommand{\abs}[1]{\left| #1 \right|}                 
\newcommand{\Abs}[1]{\left\| #1 \right\|}               
\newcommand{\floor}[1]{\left\lfloor #1 \right\rfloor}   
\newcommand{\ceil}[1]{\left\lceil #1 \right\rceil}      
\newcommand{\p}[1]{\left( #1 \right)}                   
\DeclareMathOperator{\e}{\mathrm{e}}                    
\newcommand{\ud}{\,\mathrm{d}}                          
\begin{document}

\title[Piatetski-Shapiro via Beatty sequences]{Piatetski-Shapiro sequences via Beatty sequences}
\author[L. Spiegelhofer]{Lukas Spiegelhofer}
\address{Institut f\"ur Diskrete Mathematik und Geometrie,
Technische Universit\"at Wien,
Wiedner Hauptstrasse 8--10,
1040 Wien, Austria}
\email{lukas.spiegelhofer@tuwien.ac.at}
\date{}
\begin{abstract}
Integer sequences of the form $\lfloor n^c\rfloor$,
where $1<c<2$,
can be locally approximated by sequences of the form
$\lfloor n\alpha+\beta\rfloor$ in a very good way.
Following this approach, we are led to an
estimate of the difference
\[\sum_{n\leq x}\varphi\left(\lfloor n^c\rfloor\right)
-
\frac 1c\sum_{n\leq x^c}\varphi(n)n^{\frac 1c-1},\]
which measures the deviation of the mean value of $\varphi$
on the subsequence $\lfloor n^c\rfloor$ from the expected value,
by an expression involving exponential sums.
As an application we prove that for $1<c\leq 1.42$
the subsequence of the
Thue-Morse sequence indexed by $\lfloor n^c\rfloor$
attains both of its values with asymptotic density $1/2$.
\end{abstract}
\subjclass[2010]{Primary 11B83; Secondary 11A63}

\keywords{Thue-Morse sequence, Beatty sequences, Piatetski-Shapiro sequences}
\maketitle
\section{Introduction}
Piatetski-Shapiro sequences
are sequences of the form $\p{\floor{n^c}}_{n\geq 1}$, where $c>1$ is not an integer.
They are named after I. Piatetski-Shapiro, who proved the following Prime Number Theorem (see~\cite{P53}):
If $1<c<\frac {12}{11}$, then
\begin{equation}\label{eqn:PS_thm}
\abs{\left\{n\leq x: \floor{n^c}\text{ is prime}\right\}}\sim \frac{x}{c\log x}.
\end{equation}
The range for $c$ has been extended several times, the currently best known upper bound being $c<\frac{2817}{2426}$
obtained by Rivat and Sargos~\cite{RS01}.
It is expected that the asymptotic formula~(\ref{eqn:PS_thm})
holds for all $c\in (1,2)$, an expectation that is backed up by the fact that it is true for
almost all $c\in [1,2]$ with respect to the Lebesgue measure (see~\cite{LW75}).

For a collection of various arithmetic results on Piatetski-Shapiro sequences see the article
\cite{BBBSW09} by Baker et al.
For example in that article it is proved in detail that for $1<c<\frac{149}{87}$
the number of squarefree integers of the form $\floor{n^c}$ behaves as expected:
for $c$ in this range we have
\begin{equation}\label{eqn:squarefree}\nonumber
\abs{\left\{n\leq x: \floor{n^c}\text{ is squarefree}\right\}}=\frac 6{\pi^2}x+O\p{x^{1-\varepsilon}}
.
\end{equation}
According to that paper, this result was sketched by Cao and Zhai~\cite{CZ08} before.

A more basic question is to ask for the distribution of $\floor{n^c}$ in residue classes.
In this case it is known that for all noninteger $c>1$,
all positive integers $m$ and all $a\in\dZ$ we have
\begin{equation}\label{eqn:residue_classes}\nonumber
\abs{\{n\leq x:\floor{n^c}\equiv a \bmod m\}}=\frac xm+O\p{x^{1-\varepsilon}}
\end{equation}
for some $\varepsilon=\varepsilon(c)$ that can be given explicitly,
see Deshouillers~\cite{D73} and Morgenbesser~\cite{M11}.

Another line of research was initiated by Mauduit and Rivat~\cite{MR95} which concerns the behaviour of
$q$-multiplicative functions on Piatetski-Shapiro sequences.
For an integer $q\geq 2$, a function $\varphi:\dN\ra\dC$ is called $q$-\emph{multiplicative}
if for all $a\geq 0$, $k\geq 0$ and for $0\leq b<q^k$ we have
$\varphi\p{q^ka+b}=\varphi\p{q^ka}\varphi(b)$.
The function $\e\p{\alpha s_q(n)}$, where $s_q$ denotes the
sum-of-digits function in base $q$, and the trigonometric monomial
$\e\p{\alpha n}$ are examples of $q$-multiplicative functions.
Gelfond~\cite{G68} solved the problem
of describing the distribution of the values $s_q(n)$ in residue classes,
where $n$ itself is restricted to a residue class,
and posed the analogous problem of describing the distribution of
$s_q(P(n))$ in residue classes, where
$P$ is a polynomial of degree greater than one such that $P(\dN)\subseteq \dN$.
The study of $q$-multiplicative functions on Piatetski-Shapiro sequences
can be seen as a step towards the resolution of this question,
in the same way that the Piatetski-Shapiro Prime Number Theorem is
an approach to unsolved problems such as proving that there are
infinitely many prime numbers of the form $n^2+1$.
In~\cite{MR05} Mauduit and Rivat proved the following theorem.
\begin{thmA}[Mauduit and Rivat]
Let $c\in \p{1,7/5}$ and $\gamma=1/c$.
For all $\delta\in \p{0,(7-5c)/9}$
there exists a constant $C=C(\gamma,\delta)$ such that
for all $q$-multiplicative functions $\chi$ 
and all $x\geq 1$ we have
\begin{equation}\label{eqn:mr05}
\abs{
    \sum_{1\leq n\leq x}\chi\p{\floor{n^c}}
  -
    \sum_{1\leq m\leq x^c}\gamma m^{\gamma-1}\chi(m)
}
\leq
C(\gamma,\delta)
x^{1-\delta}
.
\end{equation}
\end{thmA}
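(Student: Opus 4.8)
The plan is to follow the classical Piatetski--Shapiro circle of ideas: pass from a sum over $n$ to a sum over $m=\floor{n^c}$ weighted by a counting function, separate off the expected main term, reduce the remainder to exponential sums, and estimate those sums by combining van der Corput's inequality with the $q$-multiplicativity of $\chi$. Throughout we may assume $\abs{\chi}\le 1$, indeed that $\chi$ is unimodular (the case of interest), as otherwise the statement is vacuous or follows by normalisation.

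\emph{Step 1: reduction to exponential sums.}
For $m\ge 1$ the number of $n$ with $\floor{n^c}=m$ is the number of integers in $[m^\gamma,(m+1)^\gamma)$, that is $\ceil{(m+1)^\gamma}-\ceil{m^\gamma}$; regrouping the sum over $n$ accordingly and discarding the bounded contribution of the single boundary value $m\asymp x^c$ gives
\[
\sum_{1\le n\le x}\chi\p{\floor{n^c}}
=\sum_{1\le m\le x^c}\chi(m)\p{\ceil{(m+1)^\gamma}-\ceil{m^\gamma}}+O(1).
\]
Using $\ceil{t}=t+\tfrac12+\psi(-t)$ with $\psi(t)=\{t\}-\tfrac12$, and $(m+1)^\gamma-m^\gamma=\gamma m^{\gamma-1}+O\p{m^{\gamma-2}}$ together with $\sum_{m\ge1}\abs{\chi(m)}m^{\gamma-2}=O(1)$ (by $\abs\chi\le1$ and $\gamma<1$), the main term $\sum_{1\le m\le x^c}\gamma m^{\gamma-1}\chi(m)$ splits off and we are reduced to bounding the sawtooth sums
\[
\sum_{1\le m\le x^c}\chi(m)\,\psi\p{-m^\gamma}
\qquad\text{and}\qquad
\sum_{1\le m\le x^c}\chi(m)\,\psi\p{-(m+1)^\gamma}.
\]
After a dyadic split $M<m\le 2M$ and Vaaler's trigonometric approximation to $\psi$ with a free parameter $H$, each such sum is, up to an error $O\p{M/H}$, a $1/h$-weighted combination over $1\le h\le H$ of the exponential sums
\[
T_M(h):=\sum_{M<m\le 2M}\chi(m)\,\e\p{h\,m^{\gamma}},
\]
together with the analogous sums in which $\chi$ is replaced by $1$ (the classical Piatetski--Shapiro sums, an easier special case of what follows). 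It thus suffices to prove $\abs{T_M(h)}\ll h^{\vartheta}M^{1-\eta}$ for all dyadic $M\le x^c$, with some $\eta=\eta(c)>0$ and a small $\vartheta\ge 0$, and then to pick $H$ as a suitable power of $x$.

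\emph{Step 2: van der Corput and $q$-multiplicativity.}
Apply the Weyl--van der Corput inequality to $T_M(h)$ with shift range $R$:
\[
\abs{T_M(h)}^2\ll\frac{M^2}{R}+\frac{M}{R}\sum_{1\le r<R}\Abs{\,\sum_{m}\chi(m+r)\,\overline{\chi(m)}\;\e\p{h\big((m+r)^\gamma-m^\gamma\big)}}.
\]
Split the base-$q$ digits of $m$ at a scale $q^\lambda>R$, writing $m=q^\lambda a+b$ with $0\le b<q^\lambda$. For every $m$ outside an exceptional set of size $\ll (R/q^\lambda)M$, adding $r<R$ causes no carry past the $\lambda$-th digit, so $m+r=q^\lambda a+(b+r)$ and, by $q$-multiplicativity and $\abs{\chi}=1$,
\[
\chi(m+r)\overline{\chi(m)}
=\chi\p{q^\lambda a}\chi(b+r)\,\overline{\chi\p{q^\lambda a}}\,\overline{\chi(b)}
=\chi(b+r)\overline{\chi(b)},
\]
\emph{independent of the high digits $a$}. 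Hence the inner sum equals, up to the exceptional-set error,
\[
\sum_{0\le b<q^\lambda}\chi(b+r)\overline{\chi(b)}\sum_{a}\e\p{h\big((q^\lambda a+b+r)^\gamma-(q^\lambda a+b)^\gamma\big)},
\]
and the $a$-sum is now a pure phase sum over $\asymp M/q^\lambda$ integers. Since its phase is a difference of $\gamma$-th powers, the derivatives in $a$ are small and of constant sign — the second derivative has size $\asymp h\,q^{2\lambda}r\,M^{\gamma-3}$ — so van der Corput's second-derivative test (further van der Corput steps would enlarge the range of $c$) bounds the $a$-sum nontrivially; bounding $\sum_b$ by $q^\lambda$ then produces genuine cancellation in $T_M(h)$.

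\emph{Step 3: optimisation, and the main obstacle.}
Collecting the estimates, $\abs{T_M(h)}^2$ is controlled by the van der Corput diagonal $M^2/R$, the carry error $\asymp (R/q^\lambda)M^2$, and $q^\lambda$ times the van der Corput bound for the $a$-sum summed over $1\le r<R$; summing the resulting estimate over dyadic $M\le x^c$ and over $1\le h\le H$ with weights $1/h$, and then optimising the three parameters $R$, $q^\lambda$ and $H$ as powers of $x$, yields the theorem. The crux — and the reason the argument breaks at $c=7/5$, with admissible saving $\delta<(7-5c)/9$ once all exponents have been tracked and the geometric dyadic series summed — is exactly this optimisation: the carry analysis forces $q^\lambda$ to be much larger than $R$ (so that carries are rare), while the high-digit phase sum has length only $\asymp M/q^\lambda$ and hence forbids $q^\lambda$ from being too large, and in addition everything must be uniform in $\chi$ and essentially uniform in $h$, which is why $\vartheta$ must be kept small and the $1/h$-weighted sum over $h\le H$ costs only a small power of $x$. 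Balancing these opposing requirements is the single genuinely delicate point; the boundary contributions from $m\asymp x^c$ and $n\asymp x$, and the passage between $\psi$ and its truncated Fourier series, are routine and are absorbed into the final $O\p{x^{1-\delta}}$.
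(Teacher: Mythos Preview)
Theorem~A is not proved in this paper. It is stated in the introduction as a known result of Mauduit and Rivat~\cite{MR05}, and the present paper's contribution is precisely to \emph{replace} the approach you sketch by a different one (approximating $\floor{n^c}$ locally by Beatty sequences $\floor{n\alpha+\beta}$, leading to Theorem~1 and Proposition~1), which in the special case of the Thue--Morse sequence pushes the exponent $7/5$ to $1.42$ (Theorem~2). So there is no ``paper's own proof'' of Theorem~A to compare against here.

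That said, your outline is a faithful summary of the method actually used in~\cite{MR05}: the reduction via $\psi$ and Vaaler's approximation in Step~1 is standard, and the heart of the argument in Step~2 --- van der Corput differencing followed by the carry lemma for $q$-multiplicative functions to strip off the high digits, leaving a pure phase sum in the high-digit variable --- is exactly the mechanism Mauduit and Rivat exploit. Your Step~3 correctly identifies the tension that produces the barrier at $c=7/5$, though you do not actually carry out the optimisation, so as written the proposal is a plan rather than a proof: the claim that the optimisation yields precisely $\delta<(7-5c)/9$ is asserted, not verified. If you intend this as a self-contained proof you would need to write out the exponent bookkeeping explicitly.

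The contrast with the present paper is instructive: instead of applying van der Corput directly to $\sum\chi(m)\e(hm^\gamma)$, the paper linearises $n^c$ on short intervals, converts everything to sums of $\varphi$ over Beatty sequences, and ultimately reduces the problem to controlling the $L^1$-norm $\int_0^1\sup_x\bigl|\sum_{x<m\le x+z}\varphi(m)\e(m\theta)\bigr|\,d\theta$. This buys access to $L^1$ estimates such as the Fouvry--Mauduit bound (Lemma~1 here), which is what permits going beyond $7/5$ for the Thue--Morse sequence.
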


Morgenbesser~\cite{M11}
gave a nontrivial bound for the sum
$\sum\e\p{\alpha s_q\p{\floor{n^c}} }$ for all noninteger $c>1$,
provided only that $q$ is large enough (depending on $c$).
Deshouillers, Drmota and Morgenbesser~\cite{DDM12} investigated
subsequences of automatic sequences of the form $\floor{n^c}$ for $c<7/5$
by generalizing the method from~\cite{MR05}.
Mauduit and Rivat~\cite{MR09} gave a complete description
of the distribution of the sum of digits of squares in residue classes,
thus solving the conjecture of Gelfond for the case that $P(X)=X^2$.
The problem of proving~(\ref{eqn:mr05}) for the case that $c\geq 7/5$ is not an integer,
$\chi(n)=\e\p{\alpha s_q(n)}$ and $q$ is small could not be solved, however.

In the present article we follow a new approach to
problems on Piatetski-Shapiro sequences.
This approach is based on the idea of approximating the function $x^c$
by a family of tangents $x\alpha+\beta$, each restricted to a small interval.
Let $\delta\in (0,1-c/2)$ and $\varepsilon>0$ be given.
Then by linear approximation we can choose for $x_0\geq 1$ some
$\alpha$ and $\beta$ in such a way that
$\abs{x^c-x\alpha-\beta}<\varepsilon$ if $\abs{x-x_0}<Cx^\delta$,
where $C$ does not depend on $x_0$.
It seems therefore likely that $\floor{n^c}=\floor{n\alpha+\beta}$
for most integers $n$ in such an interval.
These observations are made precise by the lemmas in Section~\ref{subsection_proof_prp_1}.

Algebraic properties of the function $x\mapsto x^c$ are not needed for such an approximation.
Correspondingly our method can be adapted to treat functions from a larger class,
defined by certain conditions on the derivatives.
Functions like $x^c\log^\eta x$ or $x^c\exp\p{\log^\varepsilon x}$,
where $1<c<2$, $\eta\in\dR$ and $0\leq\varepsilon<1$,
are contained in this class as well as
linear combinations with positive coefficients of its elements.

A sequence of integers of the form $\p{\floor{n\alpha+\beta}}_{n\geq 1}$, where $\alpha>0$, is called a
\emph{(non-homogeneous) Beatty sequence}.
They are named after S. Beatty, who posed a problem (concerning the homogeneous case)
in the American Mathematical Monthly in 1926 (see~\cite{B26}), which essentially states that
for irrational $\alpha_1, \alpha_2>1$ such that $\frac 1{\alpha_1}+\frac 1{\alpha_2}=1$ the sequences
$\p{\floor{n\alpha_1}}_{n\geq 1}$ and $\p{\floor{n\alpha_2}}_{n\geq 1}$ form a partition of the set of positive integers.
This fact was already found in 1894 by Rayleigh~\cite[pp.122--123]{R1894} and correspondingly it is called
Rayleigh's Theorem or Beatty's Theorem.
We refer to~\cite{BS11} for some references to the newer literature concerning Beatty sequences.

We consider a bounded arithmetic function $\varphi$ and a differentiable function
$f:\dR^+\ra\dR^+$ satisfying $f'>0$ and other conditions on its derivatives and ask whether it is true that
\begin{equation}\label{eqn:substitution_rule}
        \sum_{A<n\leq 2A}{
          \varphi\p{\floor{f(n)}}
        }
      -
        \sum_{f(A)<m\leq f(2A)}{
          \varphi(m)\p{f^{-1}}'(m)
       }
    =
      o(A)
\end{equation}
as $A\ra\infty$.
The two terms on the left hand side resemble the terms involved in the change of variables in an integral.
Heuristically, we expect therefore that ``well behaved'' functions $\varphi$ yield a small error term on the right hand side.
This expectation is in general very difficult to verify,
which is obvious from the observation that,
for instance, (\ref{eqn:PS_thm})
can be reduced to a statement of the form~(\ref{eqn:substitution_rule}).

The main result of this paper, based on the method of approximating $\floor{n^c}$ by Beatty sequences
and the approximation of the periodic Bernoulli polynomial $\psi(x)=x-\floor{x}-\frac 12$ by trigonometric polynomials,
is a sufficient condition for the statement~(\ref{eqn:substitution_rule}) to hold.
More precisely we give an upper bound on the error term that involves the exponential sum
$\sum \varphi(m)e(m\theta)$ over short intervals.

We give several application of this theorem.
The first application is an improvement of the bound $7/5=1.4$ in Theorem A
to the value $1.42$
in the case that $\chi$ is the Thue-Morse sequence,
which expresses the parity of the number of ones in the binary representation of a natural number.
In order to prove this result, we use an estimate of the $L^1$-norm of the corresponding exponential sum (as a function in $\theta$) given by Fouvry and Mauduit~\cite{FM96}.

Another application concerns the joint distribution of sum-of-digits functions
on Piatetski-Shapiro sequences.
It is another problem posed in the paper~\cite{G68} by Gelfond to prove that
if $q_1,q_2\geq 2$, $m_1,m_2\geq 1$ and $l_1,l_2$ are integers
such that $(q_1,q_2)=1$, $(m_1,q_1-1)=1$ and $(m_2,q_2-1)=1$,
there exists $\varepsilon>0$ such that
\begin{multline}\label{eqn:joint_introduction}
    \abs{
      \{n\leq x:
        s_{q_1}(n)\equiv l_1\bmod m_1\text{ and }
        s_{q_2}(n)\equiv l_2\bmod m_2
      \}
    }
\\
  =
    \frac{x}{m_1m_2}
  +
    O\p{x^{1-\varepsilon}}
.
\end{multline}
This statement was proved by Kim~\cite{K99}, but a weaker form of this result,
specifically with a non-explicit error term,
was provided by B\'esineau long before (see~\cite{B72}).
To the author's knowledge the problem
of proving a result such as~(\ref{eqn:joint_introduction})
for subsequences $\floor{n^c}$ of the integers
has not been dealt with in the literature before.
We obtain such a result for all $c$ in the interval $(1,18/17)$.
In the proof we make (besides the main theorem)
use of discrete Fourier coefficients related to the sum-of-digits function.
These Fourier coefficients have proven to be an excellent tool
for treating problems related to the sum of digits (see~\cite{MR09,MR10})
and can also be used in this context.
We also note that their use leads to an alternative method of proving~(\ref{eqn:joint_introduction}).

As the third application we prove a result on the distribution in residue classes
of the Zeckendorf sum-of-digits function $s_Z$ evaluated on Piatetski-Shapiro sequences.
By the well-known theorem of Zeckendorf~\cite{Z72} every positive integer $n$ can be represented uniquely as a sum of non-consecutive Fibonacci numbers.
The number of summands in this representation is called the \emph{Zeckendorf sum-of-digits} of $n$, which we denote by $s_Z(n)$.
We prove that for integers $m\geq 1$ and $a$
and for all $c\in (1,4/3)$
there exists $\varepsilon>0$ such that
\begin{equation}\label{eqn:zeckendorf_residue_classes_introduction}\nonumber
    \abs{\left\{n\leq x: s_Z\p{\floor{n^c}}\equiv a\bmod m\right\}}
  =
    \frac xm
  +
    O\p{x^{1-\varepsilon}}
.
\end{equation}
In this article, we denote the set of positive real numbers by $\dR^+$
and the set of nonnegative integers by $\dN$.
For $x\in\dR$ we write $\e(x)=\e^{2\pi ix}$, $\Abs{x}=\min_{n\in\dZ}\abs{n-x}$ and $\{x\}=x-\floor{x}$.
Conditions like $i<n$ under a summation or product sign are to be read as $0\leq i<n$.
\section{Main results}
The main result is an estimate of the error term in~(\ref{eqn:substitution_rule}) for a special class of functions $f$.
\begin{thm}\label{thm_1}
Assume that $f$ is a two times continuously differentiable real valued function on $\dR^+$
such that
$f,f',f''>0$
and that there exist $c_1\geq 1/2$ and $c_2>0$
such that for $0<x\leq y\leq 2x$ we have
$c_1f''(x)\leq f''(y)\leq c_2f''(x)$.
Let $A_0\geq 2$ be such that $f'(A_0)\geq 1$.
There exists a constant $C=C(f)$ such that
for all complex valued arithmetic functions $\varphi$ bounded by $1$,
 for all integers $A\geq A_0$ and for all $z>0$ we have
\begin{multline}\label{eqn:expsum_conclusion}
    \frac 1A
    \abs{
        \sum_{A<n\leq 2A}{
          \varphi\p{\floor{f(n)}}
        }
      -
        \sum_{f(A)<m\leq f(2A)}{
          \varphi(m)\p{f^{-1}}'(m)
        }
    }
\\
  \leq
    C
    \p{
        \frac{f''(A)}{f'(A)^2}z^2
      +
        f'(A)(\log A)^3 J(A,z)
    }
,
\end{multline}
where
\begin{equation}\label{eqn:expsum_integral}
    J(A,z)
  =
    \int_0^1{
      \sup_{
        f(A)<x\leq f(2A)
      }
      \frac 1z
      \abs{
        \sum_{x<m\leq x+z}{
          \varphi(m)\e\p{m\theta}
        }
      }
    }
    \ud \theta
.
\end{equation}
\end{thm}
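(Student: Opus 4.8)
The plan is to exploit the local approximation of $f$ by linear functions. Fix $A \geq A_0$ and split the dyadic interval $(A, 2A]$ into consecutive blocks $I_j$ of length roughly $\Delta = c\, f'(A)/f''(A) \cdot z^{1/2}$ (a parameter to be optimized against $z$), chosen so that on each block the second-order Taylor remainder of $f$ is smaller than, say, $1/4$. Concretely, on a block $I_j$ centered at a point $n_j$, write $f(n) = \alpha_j n + \beta_j + E_j(n)$ with $\alpha_j = f'(n_j)$, and the error $E_j(n)$ controlled by $\sup_{I_j} f'' \cdot \Delta^2 \ll c_2 f''(A)\Delta^2$. The first move is to replace $\varphi(\floor{f(n)})$ by $\varphi(\floor{\alpha_j n + \beta_j})$ on each block; the number of $n$ where $\floor{f(n)} \neq \floor{\alpha_j n + \beta_j}$ is bounded, using the lemmas promised in Section~\ref{subsection_proof_prp_1}, by $O(|I_j|$ times the density of $n$ for which $\{\alpha_j n + \beta_j\}$ lies within $O(c_2 f''(A)\Delta^2)$ of an integer$)$, which in turn is controlled by a standard discrepancy/exponential-sum estimate for the Beatty-type sequence $\floor{\alpha_j n + \beta_j}$. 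Summing the contributions of all $\Theta(A/\Delta)$ blocks gives the error term $\frac{f''(A)}{f'(A)^2} z^2$ after the optimization (the $z^2$ absorbs the block-length and the smoothing-width factors).

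Next I would handle the ``clean'' sum $\sum_{n \in I_j} \varphi(\floor{\alpha_j n + \beta_j})$. Using $\floor{\alpha_j n + \beta_j} = \alpha_j n + \beta_j - \frac12 - \psi(\alpha_j n + \beta_j)$ and the classical Vaaler/Beurling approximation of $\psi$ by a trigonometric polynomial of degree $H$, one opens $\varphi(\floor{\cdot})$ — more precisely one writes $\varphi(m)$ evaluated at $m = \floor{\alpha_j n + \beta_j}$, detects the constraint $m = \floor{\alpha_j n + \beta_j}$ by an indicator $\mathbf{1}_{0 \le \{\alpha_j n + \beta_j - m\} < \text{(something)}}$, Fourier-expands this indicator up to height $H \asymp z$, and is left with sums of the shape $\sum_{m} \varphi(m) \e(m\theta)$ over windows of length comparable to $\alpha_j |I_j| \asymp z$, weighted by Fourier coefficients of size $O(1/|h|)$ with a tail loss $O(1/H)$. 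Summing over $|h| \le H$ produces the $\log$-factor, summing over the $A/\Delta$ blocks produces another, and the supremum over the window position $x \in (f(A), f(2A)]$ together with the normalization by $z$ is exactly what is packaged into $J(A,z)$; the prefactor $f'(A)(\log A)^3$ comes from the Jacobian $\alpha_j \asymp f'(A)$, the sum over $h$, the sum over blocks, and one more logarithm from dyadically decomposing $h$.

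The main obstacle I anticipate is bookkeeping the two competing error sources so that they combine into the stated clean bound: the block length $\Delta$ must be small enough that the linearization error is $\ll 1$ (forcing $\Delta \ll f'(A)/(f''(A))^{1/2}$ up to the $z$-dependence), yet large enough that we do not pay too much in the number of blocks; and the truncation height $H$ in the Vaaler approximation must be tuned to the window length $\asymp z$. Getting the powers of $z$ and the logarithmic factors to come out as $\frac{f''(A)}{f'(A)^2} z^2$ and $f'(A)(\log A)^3$ respectively — rather than, say, with an extra $\log$ or an extra power of $z$ — requires care in how the Fourier tail $O(1/H)$, the discrepancy estimate for the Beatty sequence, and the double sum over $(h, j)$ interact. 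A secondary technical point is the uniformity in $\varphi$: since $\varphi$ is an arbitrary function bounded by $1$ and we are not allowed any cancellation beyond what is visible in $J(A,z)$, every step must be an honest triangle-inequality bound except where we genuinely invoke the exponential sum $\sum \varphi(m)\e(m\theta)$, and the boundary terms at the endpoints of each block and of $(f(A), f(2A)]$ must be absorbed without loss. The hypothesis $c_1 \geq 1/2$ is exactly what guarantees $f''$ does not decay too fast across a dyadic block, so that $f''(A)$ is a legitimate proxy for $f''$ throughout $(A, 2A]$, and I would use it precisely at the point of estimating the linearization error uniformly over all blocks.
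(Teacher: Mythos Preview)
Your outline has the right two-step architecture---linearize $f$ on short blocks, then open the Beatty indicator by a Vaaler-type trigonometric approximation---and the window length $\alpha_j|I_j|\asymp z$ (hence $\Delta\asymp z/f'(A)$, not the formula you wrote at the top) is the correct choice. But there is a genuine missing mechanism.

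When you fix the slope $\alpha_j=f'(n_j)$ and appeal to a ``standard discrepancy/exponential-sum estimate'' for $\{\alpha_j n+\beta_j\}$, you are implicitly claiming control of $\sum_{r\le R}\tfrac1r\bigl|\sum_{n\in I_j}\e(nr\alpha_j)\bigr|$ uniformly in $j$. For a \emph{fixed} slope this is simply false in general: nothing prevents $f'(n_j)$ from being very close to a rational with small denominator, in which case the inner sum is $\asymp|I_j|$ and the block contributes at full strength. The paper's device is not to pick one tangent per block but to \emph{integrate} the pointwise inequality over all $\alpha\in[f'(a_i),f'(a_{i+1})]$; since the left-hand side is independent of $\alpha$, one may average the right-hand side, and then $\int\min\{K,\Abs{r\alpha}^{-1}\}\,\mathrm d\alpha$ is uniformly small by Lemma~\ref{lem_integral_1}. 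This is exactly what produces the estimate~\eqref{eqn:kuzmin_landau_integral} in the proof of Proposition~\ref{prp_1}, and without it the linearization error term is uncontrolled.

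The same $\alpha$-averaging is indispensable a second time, at the step where you want to pass from the Vaaler output---sums $\sum_m\varphi(m)\e(-mh/\alpha_j)$ indexed by a \emph{discrete} grid of frequencies $h/\alpha_j$---to the continuous $L^1$-quantity $J(A,z)=\int_0^1\sup_x\bigl|\sum_m\varphi(m)\e(m\theta)\bigr|\,\mathrm d\theta$. In the paper (see the treatment of $S_1(\alpha,h)$ leading to~\eqref{eqn:thm_fourth_step}) this is achieved by substituting $\theta=-h/\alpha$ inside the already-present integral over $\alpha\in[f'(A),f'(2A)]$, then invoking Lemma~\ref{lem_vinogradov} to adjust the summation window; the resulting $\theta$-integral is precisely $J(A,z)$. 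With $\alpha_j$ frozen you have only a sum over lattice points in frequency space and no visible route to the integral. So the averaging over the slope is not bookkeeping but the engine that makes both halves of the argument go through; your sketch should incorporate it explicitly.
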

Theorem~\ref{thm_1} is a consequence of the following result,
which provides a way to prove a discrete substitution rule
by solving a problem about the behaviour of $\varphi$ on Beatty sequences.
\begin{prp}\label{prp_1}
Assume that $f$ is a two times continuously differentiable real valued function on $\dR^+$
such that
$f,f',f''>0$,
and that there exist $c_1\geq 1/2$ and $c_2>0$
such that for $0<x\leq y\leq 2x$ we have
$c_1f''(x)\leq f''(y)\leq c_2f''(x)$.
There exists $C=C(f)$ such that
for all complex valued arithmetic functions $\varphi$ bounded by $1$, for
all $A\geq 2$ and $K>0$
we have
\begin{multline}\label{eqn:essential_conclusion}
    \frac 1A
    \abs{
        \sum_{A<n\leq 2A}{
          \varphi\p{\floor{f(n)}}
        }
      -
        \sum_{f(A)<m\leq f(2A)}{
          \varphi(m)\p{f^{-1}}'(m)
        }
    }
\\
  \leq
    C
    \p{
        f''(A)K^2
      +
        \frac{(\log A)^2}{K}
      +
        I(A,K)
    }
,
\end{multline}
where $I(A,K)$ is defined by
\begin{multline}\label{eqn:essential_integral}
    I(A,K)
  =
    \frac 1{f'(2A)-f'(A)}
\\
  \times
    \int_{f'(A)}^{f'(2A)}{
      \sup_{
          f(A)<\beta\leq f(2A)
      }
      \frac 1K
      \abs{
          \sum_{0<n\leq K}{
            \varphi\p{\floor{n\alpha+\beta}}
          }
        -
          \frac 1\alpha
          \sum_{\beta<m\leq \beta+K\alpha}{
            \varphi(m)
          }
      }
    }
    \ud \alpha
.
\end{multline}
\end{prp}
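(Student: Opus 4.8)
The plan is to prove Proposition~\ref{prp_1} by using linear approximation to replace $f$ by its tangent lines on short intervals, then summing up the resulting Beatty-sequence errors. Split the range $A<n\leq 2A$ into roughly $A/K$ consecutive blocks $B_j$ of length $K$, say $B_j=(A+jK,A+(j+1)K]$. On each block, set $\alpha_j=f'(A+jK)$ (or the slope at the left endpoint) and $\beta_j=f(A+jK)-\alpha_j(A+jK)$, so that the tangent line $\ell_j(x)=x\alpha_j+\beta_j$ satisfies $\ell_j(A+jK)=f(A+jK)$. By Taylor's theorem and the hypothesis $c_1f''(x)\leq f''(y)\leq c_2f''(x)$ on $[x,2x]$, the error $|f(n)-\ell_j(n)|$ for $n\in B_j$ is at most $C f''(A) K^2$; this is where the two-sided control on $f''$ is essential, since it guarantees $f''$ does not vary too wildly across the whole range $(A,4A]$ that we touch.

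Next I would control the effect of replacing $\floor{f(n)}$ by $\floor{\ell_j(n)}$. The two floors differ only when $f(n)$ and $\ell_j(n)$ straddle an integer; since $|f(n)-\ell_j(n)|\leq \eta_j:=Cf''(A)K^2$, for each block the number of exceptional $n$ is $O(\eta_j K + 1)$ on average over a shift, but more cleanly one argues: $|\varphi(\floor{f(n)})-\varphi(\floor{\ell_j(n)})|\leq 2\cdot\mathbf{1}[\|\ell_j(n)\|<\eta_j \text{ or nearby}]$, and summing the indicator over $n\in B_j$ and then over $j$ using equidistribution of $(\ell_j(n))$ contributes $O(f''(A)K^2\cdot A + (A/K))$ — hence after dividing by $A$ the terms $f''(A)K^2$ and $(\log A)^2/K$ (the second absorbing the $A/K$ count, with a logarithmic loss from handling the boundary/bad blocks and from comparing $\frac1{\alpha}$ sums). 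Simultaneously one must match the continuous comparison term: replace $\sum_{f(A)<m\leq f(2A)}\varphi(m)(f^{-1})'(m)$ blockwise by $\sum_j \frac1{\alpha_j}\sum_{\beta_j+\alpha_j(A+jK)<m\leq \beta_j+\alpha_j(A+(j+1)K)}\varphi(m)$, with the error in this replacement again bounded by $Cf''(A)K^2 + C(\log A)^2/K$ using $|(f^{-1})'(m)-1/\alpha_j|\leq Cf''(A)/f'(A)^2$ on the $j$-th image block together with $|\varphi|\leq1$.

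It remains to bound, after all these reductions, the sum over blocks of the genuinely Beatty-type discrepancies. For each $j$,
\[
\frac1A\cdot\frac{K}{?}\cdot\frac1K\abs{\sum_{0<n\leq K}\varphi(\floor{n\alpha_j+\beta_j'})-\frac1{\alpha_j}\sum_{\beta_j'<m\leq\beta_j'+K\alpha_j}\varphi(m)}
\]
(for an appropriate shifted intercept $\beta_j'\in(f(A),f(2A)]$) is exactly a quantity controlled by the supremum inside $I(A,K)$. Since the slopes $\alpha_j$ range over $[f'(A),f'(2A)]$ and are spaced by roughly $f''(A)K$, the average over $j$ of these discrepancies is comparable to the average of the corresponding supremum over $\alpha\in[f'(A),f'(2A)]$; converting the sum over $j$ into the normalized integral $\frac1{f'(2A)-f'(A)}\int_{f'(A)}^{f'(2A)}$ requires that the integrand does not oscillate too fast in $\alpha$, which follows because changing $\alpha$ by $O(f''(A)K)$ changes $n\alpha$ by $O(f''(A)K^2)$ for $n\leq K$ — the same small quantity already absorbed above. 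This step, the passage from the discrete average over the $O(A/K)$ tangent slopes to the continuous average defining $I(A,K)$ (and doing so while the intercepts $\beta_j'$ are allowed to be the worst case, which is why the supremum over $\beta$ appears), is the main technical obstacle; everything else is Taylor expansion and counting near-integer values with $|\varphi|\leq1$. Collecting the three contributions $f''(A)K^2$, $(\log A)^2/K$, and $I(A,K)$ yields~(\ref{eqn:essential_conclusion}).
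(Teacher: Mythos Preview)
Your block decomposition, the role of Taylor's theorem, and the split into a floor-replacement error, a Beatty discrepancy, and a ``replace $(f^{-1})'$ by $1/\alpha$'' error are all the right moves and match the paper's $T_1,T_2,T_3,T_4$ split. The genuine gap is in how you handle the first of these. You fix a single slope $\alpha_j=f'(A+jK)$ per block and then assert that ``equidistribution of $(\ell_j(n))$'' bounds the number of $n\in B_j$ with $\lfloor f(n)\rfloor\neq\lfloor\ell_j(n)\rfloor$ by $O(\eta_jK+1)$. But for a \emph{fixed} $\alpha_j$ there is no such bound: if $\alpha_j$ happens to be rational (or very well approximable) with small denominator, the fractional parts $\{n\alpha_j+\beta_j\}$ cluster, and the count can be of order $K$ regardless of how small $\eta_j$ is. Nothing in the hypotheses prevents $f'(A+jK)$ from landing on such bad slopes. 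The Erd\H{o}s--Tur\'an inequality gives here only
\[
\#\{n\in B_j:\|n\alpha_j+\beta_j\|<\eta_j\}\le 2\eta_jK+\frac{K}{R}+\sum_{1\le r\le R}\frac1r\Bigl|\sum_{n\in B_j}e(nr\alpha_j)\Bigr|,
\]
and the last sum is exactly where the Diophantine nature of $\alpha_j$ enters.

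The paper's cure is not to fix $\alpha_j$ at all, but to \emph{average} the entire blockwise identity over $\alpha\in[f'(a_i),f'(a_{i+1})]$ before summing over blocks. This is the key manoeuvre you are missing. After Erd\H{o}s--Tur\'an, the $T_1$ contribution becomes
\[
\frac{1}{f'(a_{i+1})-f'(a_i)}\int_{f'(a_i)}^{f'(a_{i+1})}\sum_{1\le r\le R}\frac1r\Bigl|\sum_{n\in B_i}e(nr\alpha)\Bigr|\,d\alpha,
\]
and the inner integral of $\min\{K,\|r\alpha\|^{-1}\}$ over an interval of length $\asymp f''(A)K$ is bounded uniformly by $O((rf''(A)K+1)\log K)$; summing over $i$ and over $r\le R=A$ produces precisely the $(\log A)^2/K$ term. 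No Diophantine information about any individual slope is needed. Simultaneously, the $T_2$ contributions (your Beatty discrepancies) are already integrals over adjacent $\alpha$-intervals that concatenate \emph{exactly} into the integral defining $I(A,K)$, so what you call ``the main technical obstacle'' --- passing from a discrete average over the $\alpha_j$ to the continuous one --- simply does not arise. In short: integrate in $\alpha$ first, and both of your problematic steps disappear at once.
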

\section{Applications}
In the proofs of our applications, concerning sum-of-digits functions, we make use of bounds for the exponential sum $\sum_{x<m\leq x+z}\varphi(m)\e\p{m\theta}$
that are independent of the value of $x$.
Moreover, for simplicity we concentrate on the case that $f(x)=x^c$, although
it would be possible to derive analogous results for a larger class of functions,
as we noted in the introduction.
We state a corollary of Theorem~\ref{thm_1} that is adjusted to this situation.
\begin{cor}\label{cor_1}
Let $\varphi$ be a complex valued arithmetic function bounded by $1$.
If
$a\in(0,1]$
and $C$
are such that
\begin{equation}\label{eqn:cor_expsum_integral}
    \int_0^1{
      \sup_{
        x\geq 0
      }{
        \abs{
          \sum_{x<m\leq x+z}{
            \varphi(m)\e\p{m\theta}
          }
        }
      }
    }
    \ud \theta
  \leq
    Cz^a
\end{equation}
for $z\geq 1$, then
for all $c\in (1,2)$ and all $\eta\in\p{0,\frac{2-(a+1)c}{3-a}}$
there is a $C_1=C_1(a,c,C,\eta)$ such that
\begin{equation}\label{eqn:cor_expsum_conclusion}
    \frac 1N
    \abs{
        \sum_{1\leq n\leq N}{
          \varphi\p{\floor{n^c}}
        }
      -
        \frac 1c
        \sum_{1\leq m\leq N^c}{
          \varphi(m)m^{\frac 1c-1}
        }
    }
  \leq
    C_1N^{-\eta}
\end{equation}
for $N\geq 1$.
\end{cor}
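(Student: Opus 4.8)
The plan is to derive Corollary~\ref{cor_1} from Theorem~\ref{thm_1} by a dyadic decomposition together with the specific choice $f(x)=x^c$ and an appropriate choice of the free parameter $z$. First I would record the elementary estimates for $f(x)=x^c$ with $1<c<2$: one has $f'(x)=cx^{c-1}$, $f''(x)=c(c-1)x^{c-2}>0$, and the ratio condition $c_1f''(x)\leq f''(y)\leq c_2f''(x)$ for $x\leq y\leq 2x$ holds with constants depending only on $c$ (since $f''$ is a positive power function). One also has $f'(A_0)\geq 1$ for all $A_0$ larger than some $A_0(c)$, so Theorem~\ref{thm_1} applies to every dyadic block $(A,2A]$ with $A\geq A_0(c)$. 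For such a block, $f''(A)/f'(A)^2 \asymp A^{-c}$ and $f'(A)\asymp A^{c-1}$, so the right-hand side of~(\ref{eqn:expsum_conclusion}) becomes $\ll A^{-c}z^2 + A^{c-1}(\log A)^3 J(A,z)$.

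Next I would bound $J(A,z)$ using hypothesis~(\ref{eqn:cor_expsum_integral}). Since the supremum in~(\ref{eqn:cor_expsum_integral}) is over \emph{all} $x\geq 0$, it dominates the supremum over $f(A)<x\leq f(2A)$ appearing in $J(A,z)$, and therefore $J(A,z)\leq \frac 1z\cdot Cz^a = Cz^{a-1}$ for $z\geq 1$. Plugging this in, the dyadic error is $\ll A^{-c}z^2 + CA^{c-1}(\log A)^3 z^{a-1}$. Now I would optimize in $z$: balancing $A^{-c}z^2$ against $A^{c-1}z^{a-1}$ (ignoring logs) gives $z^{3-a}\asymp A^{2c-1}$, i.e. $z\asymp A^{(2c-1)/(3-a)}$, which is $\geq 1$ for $A\geq 1$ since $2c-1>0$. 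With this choice each of the two main terms is of size (up to logs) $A^{-c}\cdot A^{2(2c-1)/(3-a)} = A^{(-c(3-a)+2(2c-1))/(3-a)} = A^{(c(a+1)-2)/(3-a)}$. The exponent is $-\frac{2-(a+1)c}{3-a}$, which is negative precisely when $(a+1)c<2$; since $a\leq 1$ and $c<2$ one can arrange this, and for any $\eta$ strictly smaller than $\frac{2-(a+1)c}{3-a}$ the block estimate is $\ll_\eta A^{-\eta}$ after the logarithmic factor is absorbed. (The constraint $a\in(0,1]$ is exactly what keeps $3-a>0$ and the exponent $(2c-1)/(3-a)$ positive.)

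Finally I would assemble the dyadic blocks. Writing $(1,N]$ as a union of $O(\log N)$ intervals $(A,2A]$ with $A$ ranging over powers of $2$ up to $N$ (and handling the finitely many initial $n$ with $A<A_0(c)$ trivially, as they contribute $O(1)$), and correspondingly splitting $(1,N^c]$ into the images $(f(A),f(2A)]=(A^c,(2A)^c]$, the left-hand side of~(\ref{eqn:cor_expsum_conclusion}) is at most $\frac 1N\sum_{A}\big|\text{block}_A\big|$. Each block is $\ll A\cdot A^{-\eta'}$ for any $\eta'<\frac{2-(a+1)c}{3-a}$, and summing the geometric-type series $\sum_{A=2^j\leq N} A^{1-\eta'} \ll N^{1-\eta'}$ gives the bound $\ll N^{-\eta'}$; choosing $\eta<\eta'$ absorbs the implied constant and yields~(\ref{eqn:cor_expsum_conclusion}). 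The only mild subtlety—the main bookkeeping point rather than a real obstacle—is checking that the ``main term'' $\frac 1c\sum_{m\leq N^c}\varphi(m)m^{1/c-1}$ really decomposes compatibly with the dyadic blocks, i.e. that $\sum_{f(A)<m\leq f(2A)}\varphi(m)(f^{-1})'(m)$ with $(f^{-1})'(m)=\frac 1c m^{1/c-1}$ telescopes over the dyadic ranges to the full sum over $m\leq N^c$; this is immediate since the intervals $(A^c,(2A)^c]$ for $A=1,2,4,\dots$ partition $(1,N^c]$ up to the negligible initial segment. I expect the genuinely delicate point to be only the verification that the exponent arithmetic produces exactly the stated range $\eta\in\big(0,\frac{2-(a+1)c}{3-a}\big)$, and that the optimizing $z$ stays $\geq 1$ uniformly in $A$, both of which follow from $a\in(0,1]$ and $1<c<2$.
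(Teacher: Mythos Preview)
Your proposal is correct and follows essentially the same approach as the paper: apply Theorem~\ref{thm_1} to $f(x)=x^c$ on dyadic blocks, use the hypothesis to bound $J(A,z)\leq Cz^{a-1}$, choose $z=A^{(2c-1)/(3-a)}$ to balance the two error terms, absorb the $(\log A)^3$ factor by taking an exponent strictly below $\frac{2-(a+1)c}{3-a}$, and sum the resulting geometric series over dyadic scales. The paper's proof differs only cosmetically (it sums over $N/2^i$ rather than over $A=2^j\leq N$, and it spells out the passage from integer to real $A$), so your argument matches it.
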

\begin{proof}
For $A>0$ we write
\begin{equation}\label{eqn:dfn_F}
    F(A)
  =
    \abs{
        \sum_{A<n\leq 2A}{
          \varphi\p{\floor{n^c}}
        }
      -
        \frac 1c
        \sum_{A^c<m\leq (2A)^c }{
          \varphi(m)m^{\frac 1c-1}
        }
    }
.
\end{equation}
Let $1<c<2$ and set
$z=A^\frac{2c-1}{3-a}$ for $A\geq 2$.
From
hypothesis~(\ref{eqn:cor_expsum_integral})
and Theorem~\ref{thm_1} it follows
by a short calculation that
for all integers $A\geq 2$ and all $\varepsilon>0$ we have
\begin{equation}\label{eqn:errorterm_rho}
    F(A)
  \ll
    A^{1-\rho+\varepsilon}
\end{equation}
with the choice $\rho=\frac{2-c(a+1)}{3-a}$.
The implied constant in~(\ref{eqn:errorterm_rho}) may depend on $a, c, C$ and $\varepsilon$.
Altering the summation limits in~(\ref{eqn:dfn_F}) to
$\floor{A}<n\leq \floor{2A}$ and $\floor{A}^c<m\leq \floor{2A}^c$ respectively
introduces an error term of $O(1)$, which is neglegible.
Therefore~(\ref{eqn:errorterm_rho}) holds for all real $A\geq 2$ and $\varepsilon>0$.
We have $F(A)=0$ for $A<\frac 12$, and it is clear that $F(A)$ is bounded for $0<A\leq 2$.
From these observations and~(\ref{eqn:errorterm_rho}) it follows that $F(A)\ll A^{1-\rho+\varepsilon}$
for all $A>0$.
Since $\rho-\varepsilon<1$ we
get
\begin{multline*}
    \abs{
        \sum_{1\leq n\leq N}{
          \varphi\p{\floor{n^c}}
        }
      -
        \frac 1c
        \sum_{1\leq m\leq N^c}{
          \varphi(m)m^{\frac 1c-1}
        }
    }
\\
  =
    \abs{
      \sum_{i\geq 1}\p{
          \sum_{
            N/{2^i}<n\leq N/{2^{i-1}}
          }{
            \varphi\p{\floor{n^c}}
          }
        -
        \frac 1c
          \sum_{
            \p{N/{2^i}}^c<n\leq\p{N/{2^{i-1}} }^c
          }{
            \varphi(m)m^{\frac 1c-1}
          }
      }
    }
\\
  \leq
    \sum_{i\geq 1}F\p{\frac N{2^i}}
  \ll
    CN^{1-\rho+\varepsilon}
.
\end{multline*}
From this the assertion follows.
\end{proof}
\subsection{The Thue-Morse sequence}
In our first application
we are interested in the special case that the function $\varphi$
is the Thue-Morse sequence in the form $\varphi(n)=(-1)^{s_2(n)}$,
where $s_2(n)$ denotes the sum of digits of $n$ in base $2$.
\begin{thm}[The Thue-Morse sequence on $\floor{n^c}$]\label{app_1}
There exists
$a\in [0,0.4076)$
such that
for all $c\in (1,2)$ and all $\eta\in\p{0,\frac{2-(a+1)c}{3-a}}$
there is a constant $C=C(c,\eta)$ such that for all $N\geq 2$
\begin{equation}\label{eqn:nc_estimate}\nonumber
    \frac 1N
    \abs{
      \sum_{1\leq n\leq N}{
        (-1)^{s_2\p{\floor{n^c}} }
      }
    }
  \leq
    CN^{-\eta}
.
\end{equation}
In particular, for $1<c\leq 1.42$ there exist
$\eta>\max\left\{0,(7-5c)/9\right\}$ and $C$ such that this estimate holds.
\end{thm}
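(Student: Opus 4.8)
The plan is to derive Theorem~\ref{app_1} directly from Corollary~\ref{cor_1}. The only thing to be checked is the hypothesis~(\ref{eqn:cor_expsum_integral}): we must exhibit some $a\in[0,0.4076)$ such that
\[
    \int_0^1{
      \sup_{x\geq 0}{
        \abs{
          \sum_{x<m\leq x+z}{(-1)^{s_2(m)}\e\p{m\theta}}
        }
      }
    }
    \ud\theta
  \leq
    Cz^a
\]
for all $z\geq 1$. Once this is in place, Corollary~\ref{cor_1} immediately yields, for every $c\in(1,2)$ and every $\eta\in\p{0,\frac{2-(a+1)c}{3-a}}$, a constant $C=C(c,\eta)$ with $\frac 1N\abs{\sum_{n\leq N}(-1)^{s_2(\floor{n^c})}}\leq C_1 N^{-\eta}$, which is the first display of the theorem.

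First I would reduce the $L^1$-estimate above to a known bound of Fouvry and Mauduit~\cite{FM96} on the $L^1$-norm of the Thue-Morse exponential sum. The classical object there is $\int_0^1\abs{\sum_{m<M}(-1)^{s_2(m)}\e(m\theta)}\ud\theta$, which is known to be $O(M^{\lambda})$ for an explicit exponent $\lambda<0.4076$. To pass from sums over $[0,M)$ to sums over an arbitrary window $(x,x+z]$ with the supremum over $x$ inside the integral, I would (i) split $(x,x+z]$ at the nearest multiple of a suitable power of $2$, say $2^k$ with $2^k\asymp z$, so that the window is covered by $O(1)$ intervals each of which is a union of dyadic blocks $[a2^j,(a+1)2^j)$ with $j\le k$, of which there are $O(k)=O(\log z)$ many; (ii) exploit the self-similar structure $\sum_{m<2^j}(-1)^{s_2(a2^j+m)}\e((a2^j+m)\theta)=(-1)^{s_2(a)}\e(a2^j\theta)\sum_{m<2^j}(-1)^{s_2(m)}\e(m\theta)$, so each dyadic block contributes a unimodular constant times the "pure" sum of length $2^j$; (iii) sum the resulting $O(\log z)$ terms, use $\int_0^1\abs{\sum_{m<2^j}(-1)^{s_2(m)}\e(m\theta)}\ud\theta\ll 2^{j\lambda}\ll z^{\lambda}$, and absorb the logarithmic loss by choosing $a$ slightly larger than $\lambda$ but still $<0.4076$. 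This gives~(\ref{eqn:cor_expsum_integral}) with that $a$.

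For the final sentence, I would specialize: for $1<c\leq 1.42$ I must produce $\eta>\max\{0,(7-5c)/9\}$ with the estimate holding, i.e. I need $\frac{2-(a+1)c}{3-a}>\frac{7-5c}{9}$ (and the left side positive) for all such $c$. Since the admissible range of $\eta$ is the open interval $\p{0,\frac{2-(a+1)c}{3-a}}$, it suffices to verify the strict inequality $\frac{2-(a+1)c}{3-a}>(7-5c)/9$ at $c=1.42$ with $a=0.4076$ (both sides are decreasing in $c$ and the left side decreases no faster, so the worst case is the endpoint), which is a short numerical check; one then picks any $\eta$ strictly between the two quantities. Positivity of $\frac{2-(a+1)c}{3-a}$ at $c=1.42$, $a=0.4076$ is likewise immediate since $2-(a+1)c\approx 2-1.42\cdot 1.4076>0$.

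The main obstacle is step~(iii) of the reduction: controlling the logarithmic factor $\log z$ that arises from decomposing a generic window into dyadic blocks, so that the final exponent stays strictly below $0.4076$ rather than merely below $0.4076+o(1)$. This is handled by noting that the Fouvry–Mauduit exponent $\lambda$ is strictly smaller than $0.4076$, leaving room to absorb the $(\log z)^{O(1)}$ loss into $z^{a-\lambda}$ for any fixed $a\in(\lambda,0.4076)$; no genuine new harmonic analysis is needed beyond their estimate.
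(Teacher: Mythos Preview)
Your reduction of the $L^1$-estimate to the Fouvry--Mauduit bound via dyadic decomposition and the self-similarity of $(-1)^{s_2}$ is essentially the paper's argument (its Proposition~\ref{prp_app_1}). One small remark: the $\log z$ loss you worry about in step~(iii) is not actually there, since $\sum_{j\le k}2^{j\lambda}\ll 2^{k\lambda}\ll z^\lambda$ is geometric; the paper gets $a=1+\frac{\log\rho}{\log 2}$ on the nose. But your slack does no harm.

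There is, however, a genuine gap. Corollary~\ref{cor_1} does \emph{not} yield
\[
\frac 1N\Bigl|\sum_{1\le n\le N}(-1)^{s_2(\floor{n^c})}\Bigr|\le C_1N^{-\eta}
\]
directly: it only bounds the \emph{difference}
\[
\frac 1N\Bigl|\sum_{1\le n\le N}(-1)^{s_2(\floor{n^c})}-\frac 1c\sum_{1\le m\le N^c}(-1)^{s_2(m)}m^{1/c-1}\Bigr|.
\]
You must separately show that the second sum is $O(N^{1-\eta})$. This is easy---the Thue--Morse partial sums $\sum_{m<M}(-1)^{s_2(m)}$ are bounded (indeed vanish for even $M$), and partial summation then gives $\frac1N\sum_{m\le N^c}(-1)^{s_2(m)}m^{1/c-1}\ll N^{-c}$, which is negligible---but it is a required step that your write-up omits.

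A smaller slip: in the final paragraph your monotonicity argument is backwards. You correctly observe that the left side $\frac{2-(a+1)c}{3-a}$ decreases no faster in $c$ than the right side $(7-5c)/9$; but that means the difference (left minus right) is \emph{non-decreasing} in $c$, so the worst case for the inequality is at $c$ near $1$, not at $c=1.42$. The check at $c=1.42$ is still needed, but for positivity of the left side alone (ensuring the admissible $\eta$-interval is nonempty). The inequality itself should be checked near $c=1$, or simply at both endpoints since everything is linear in $c$.
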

In order to prove this, we want to apply Corollary~\ref{cor_1} and therefore we have to find
an estimate for the expression on the left hand side of~(\ref{eqn:cor_expsum_integral}).
We use the following statement which
follows from Th\'eor\`eme 3 and inequality (1.5) in the paper~\cite{FM96} by Fouvry and Mauduit.
\begin{lem}\label{lem_FM}
There exists a real number $\rho\in(0.6543,0.6632)$
such that
\[
    \int_0^1{
      \prod_{0\leq k<\lambda}{
        \abs{\sin\p{2^k\pi \theta}}
      }
    }
    \ud \theta
  \asymp
    \rho^\lambda
\]
for all $\lambda\geq 0$.
\end{lem}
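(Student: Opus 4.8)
The plan is to connect the integral $\int_0^1 \prod_{0\le k<\lambda}|\sin(2^k\pi\theta)|\,\mathrm d\theta$ to the $L^1$-norm of the Thue--Morse exponential sum and then invoke the quoted results of Fouvry and Mauduit. The classical identity is that the trigonometric polynomial associated with the Thue--Morse sequence factors: for $\lambda\ge 0$ one has
\[
    \sum_{0\le n<2^\lambda}(-1)^{s_2(n)}\e(n\theta)
  =
    \prod_{0\le k<\lambda}\p{1-\e\p{2^k\theta}}
,
\]
which follows by induction from the base-$2$ digit recursion $s_2(2n)=s_2(n)$, $s_2(2n+1)=s_2(n)+1$. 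Taking absolute values and using $|1-\e(t)|=2|\sin(\pi t)|$ gives
\[
    \abs{\sum_{0\le n<2^\lambda}(-1)^{s_2(n)}\e(n\theta)}
  =
    2^\lambda\prod_{0\le k<\lambda}\abs{\sin\p{2^k\pi\theta}}
.
\]
So the integral in the lemma is, up to the factor $2^{-\lambda}$, exactly the $L^1$-norm over $[0,1]$ of this exponential sum. First I would record this identity carefully, since it is the bridge to the literature.

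Next I would cite the precise statements from~\cite{FM96}. Their Th\'eor\`eme 3 establishes that the sequence $2^{-\lambda}\Abs{\sum_{n<2^\lambda}(-1)^{s_2(n)}\e(n\cdot)}_{L^1}$ behaves like $\rho^\lambda$ for a constant $\rho$, and inequality (1.5) there pins down numerical bounds for $\rho$, namely that $\rho$ lies strictly between the stated values $0.6543$ and $0.6632$ (these come from evaluating finitely many of the integrals and using sub/super-multiplicativity of the sequence to sandwich the limit). Combining their normalization with the identity above, the factor $2^\lambda$ cancels and we obtain $\int_0^1\prod_{0\le k<\lambda}|\sin(2^k\pi\theta)|\,\mathrm d\theta\asymp\rho^\lambda$ with the same $\rho\in(0.6543,0.6632)$. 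I would be a little careful about the normalization convention (whether~\cite{FM96} integrates over $[0,1]$ or $\dR/\dZ$, and whether the exponential is $\e(n\theta)$ or $\e(-n\theta)$; both give the same $L^1$-norm), and about the meaning of $\asymp$ (two-sided inequality with absolute implied constants, uniform in $\lambda$, including the trivial cases $\lambda=0,1$).

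The main obstacle here is not really mathematical difficulty but fidelity of citation: one must make sure that what is called $\rho$ in~\cite{FM96} is exactly the base of the geometric decay, that inequality (1.5) indeed gives the numerical window claimed, and that the product formula is being applied with matching conventions so that no stray power of $2$ survives. Once the identity $2^\lambda\prod_{k<\lambda}|\sin(2^k\pi\theta)|=|\sum_{n<2^\lambda}(-1)^{s_2(n)}\e(n\theta)|$ is in place and the Fouvry--Mauduit asymptotic is quoted in the right normalization, the lemma follows immediately. I would therefore structure the proof as: (i) prove the product formula by induction, (ii) rewrite the integral as $2^{-\lambda}$ times the relevant $L^1$-norm, (iii) quote Th\'eor\`eme 3 and (1.5) of~\cite{FM96} to conclude.
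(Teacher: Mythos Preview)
Your proposal is correct and matches the paper's approach: the paper does not give a self-contained proof of this lemma at all, but simply states that it ``follows from Th\'eor\`eme~3 and inequality~(1.5)'' in Fouvry--Mauduit~\cite{FM96}, exactly as you propose to cite. The product identity you spell out as a bridge is in fact derived separately in the paper (see the proof of Proposition~\ref{prp_app_1}, equation~(\ref{eqn:sinprod})), so your step~(i) is not strictly needed here---the cited statements in~\cite{FM96} already concern the product of sines directly---but including it does no harm.
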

The number $\rho$ is clearly uniquely determined.
No simple representation of $\rho$ seems to be known
and in fact the above bounds were obtained with the help of numerical computations.
The authors of the cited article also remark that evaluating the numerical value of the integral for
about a dozen values of $\lambda$
(by means of splitting up the interval $[0,1]$ into $2^{\lambda}$ subintervals of equal length
and using the fact that for $k<\lambda$ the function $\sin\p{2^k\pi \theta}$ has a constant sign on each of them)
suggests that $\rho=0.661\ldots$.
From Lemma~\ref{lem_FM} we deduce the following estimate, which is the main component of the proof of Theorem~\ref{app_1}.
\begin{prp}\label{prp_app_1}
Let $\rho$ be defined as in Lemma~\ref{lem_FM}.
Then uniformly for $z\geq 1$ we have
\begin{equation}\label{eqn:app_1}\nonumber
\int_0^1 \sup_{x\geq 0} \abs{ \sum_{x<m\leq x+z} (-1)^{s_2(m)}\e\p{m\theta} }\ud \theta\ll z^{1+\frac{\log \rho}{\log 2}}.
\end{equation}
\end{prp}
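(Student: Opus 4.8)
The plan is to reduce the supremum over $x$ and the exponential sum to the product $\prod_{0\le k<\lambda}\abs{\sin(2^k\pi\theta)}$ appearing in Lemma~\ref{lem_FM}, and then integrate. First I would set $\lambda=\floor{\log_2 z}$, so that $2^\lambda\asymp z$, and split the interval $(x,x+z]$ into blocks according to the residue of $m$ modulo $2^\lambda$. The key structural fact is that the Thue-Morse sequence $(-1)^{s_2(m)}$ is $2$-multiplicative: writing $m=2^\lambda a+b$ with $0\le b<2^\lambda$ one has $(-1)^{s_2(m)}=(-1)^{s_2(a)}(-1)^{s_2(b)}$. This lets me factor the relevant sums over a full block of length $2^\lambda$, and the classical identity $\sum_{0\le b<2^\lambda}(-1)^{s_2(b)}\e(b\theta)=\prod_{0\le k<\lambda}\p{1-\e(2^k\theta)}$ produces exactly $2^\lambda\prod_{0\le k<\lambda}\abs{\sin(2^k\pi\theta)}$ in absolute value.

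Next I would deal with the fact that $(x,x+z]$ need not be a union of complete residue blocks of length $2^\lambda$, and that its endpoints are arbitrary. Since $z\asymp 2^\lambda$, the interval $(x,x+z]$ meets only $O(1)$ of the blocks $(2^\lambda j, 2^\lambda(j+1)]$; on each of them the sum of $(-1)^{s_2(m)}\e(m\theta)$ over the part of the block lying in $(x,x+z]$ is an incomplete exponential-type sum. The standard way to handle an incomplete sum of a $q$-multiplicative weight times $\e(m\theta)$ over a sub-interval of a length-$2^\lambda$ block is to further decompose dyadically inside the block: a partial sum $\sum_{2^\lambda j<m\le y}$ with $2^\lambda j<y\le 2^\lambda(j+1)$ can be written as $O(\lambda)$ complete sums over sub-blocks of lengths $2^{k}$, $k<\lambda$, plus boundary terms. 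Each such complete sub-block sum is, up to the fixed phase $(-1)^{s_2(j)}\e(2^\lambda j\theta)$ and a shift, bounded by $2^{k}\prod_{0\le i<k}\abs{\sin(2^i\pi\theta)}$. Taking absolute values and the supremum over $x$, I obtain a pointwise-in-$\theta$ bound of the shape
\[
\sup_{x\ge 0}\abs{\sum_{x<m\le x+z}(-1)^{s_2(m)}\e(m\theta)}
\ll
\sum_{0\le k\le\lambda} 2^{k}\prod_{0\le i<k}\abs{\sin(2^i\pi\theta)}
,
\]
with an absolute implied constant, since only boundedly many blocks contribute.

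Finally I would integrate this bound over $\theta\in[0,1]$ and invoke Lemma~\ref{lem_FM} term by term:
\[
\int_0^1 2^{k}\prod_{0\le i<k}\abs{\sin(2^i\pi\theta)}\ud\theta
\asymp
2^{k}\rho^{k}
=
(2\rho)^{k}
.
\]
Since $\rho<0.6632<1/2^{0}$ but $2\rho>1$, the geometric sum $\sum_{0\le k\le\lambda}(2\rho)^k$ is dominated by its last term, giving $\ll (2\rho)^\lambda\asymp z^{1+\log_2\rho}$ because $2^\lambda\asymp z$ and $(2\rho)^\lambda=2^\lambda\rho^\lambda=2^\lambda 2^{\lambda\log_2\rho}\asymp z\cdot z^{\log_2\rho}$. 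This is exactly the claimed bound $z^{1+\log\rho/\log 2}$. The main obstacle is the careful bookkeeping in the second step: one must verify uniformly in $x$ that an arbitrary incomplete sum over $(x,x+z]$ really does reduce to $O(\lambda)$ complete sub-block sums of the controlled form, keeping all phase factors and endpoint contributions under control so that the implied constant is genuinely independent of $x$ and of $z$; the number-theoretic input (the product formula for the complete sum, and Lemma~\ref{lem_FM}) is then essentially plug-and-play.
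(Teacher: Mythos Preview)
Your proposal is correct and follows essentially the same argument as the paper: both establish the product formula $\abs{\sum_{m\in L}(-1)^{s_2(m)}\e(m\theta)}=2^k\prod_{i<k}\abs{\sin(2^i\pi\theta)}$ on dyadic blocks $L$ of length $2^k$, perform a dyadic decomposition of an arbitrary interval so that each scale $2^k$ with $k\leq\log_2 z$ occurs $O(1)$ times, and then integrate term by term via Lemma~\ref{lem_FM} and sum the resulting geometric series in $2\rho>1$. The only cosmetic difference is that you first fix $\lambda=\floor{\log_2 z}$ and decompose inside the $O(1)$ ambient blocks of length $2^\lambda$, whereas the paper decomposes the interval directly; the resulting pointwise bound and the final estimate are identical.
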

\begin{proof}
If $L$ is an interval of the form
$[\ell2^\lambda,(\ell+1)2^\lambda)$,
where $\ell$ and $\lambda$ are nonnegative integers, we have
the equality
\begin{equation}\label{eqn:expsum_product}
    \abs{\sum_{m\in L}(-1)^{s_2(m)}\e(m\theta)}
  =
    \prod_{0\leq k<\lambda}\abs{1-\e\p{2^k \theta}}
.
\end{equation}
This is clear for $\lambda=0$.
If $\lambda>0$, then by the relations $s_2(2m)=s_2(m)$ and $s_2(2m+1)=s_2(2m)+1$ we have
\begin{multline*}
    \abs{\sum_{m\in L}(-1)^{s_2(m)}\e(m\theta)}
\\
  =
    \abs{
      \sum_{\ell2^{\lambda-1}\leq m<(\ell+1)2^{\lambda-1}}{
        \p{
            (-1)^{s_2(2m)}\e(2m\theta)
          +
            (-1)^{s_2(2m+1)}\e((2m+1)\theta)
        }
      }
    }
\\
  =
    \abs{(1-e(\theta))}
    \abs{
      \sum_{\ell2^{\lambda-1}\leq m<(\ell+1)2^{\lambda-1}}{
        (-1)^{s_2(m)}\e(2m\theta)
      }
    }
\end{multline*}
from which~(\ref{eqn:expsum_product}) follows by induction.
Using the trigonometric identity $\abs{1-\e(\theta)}=2\abs{\sin(\pi \theta)}$ we get
\begin{equation}\label{eqn:sinprod}
    \abs{\sum_{m\in L}(-1)^{s_2(m)}\e(m\theta)}
  =
    2^\lambda\prod_{0\leq k<\lambda}\abs{\sin\p{2^k\pi \theta}}.
\end{equation}
If $L$ is any finite nonempty interval of nonnegative integers, we use dyadic decomposition of $L$
in the form of the following statement:
Let $a<b$ be nonnegative integers.
There exists a decomposition
$a=a_0\leq\ldots\leq a_L=b_L\leq\ldots\leq b_0=b$ such that
for $j<L$ we have
$a_{j+1}-a_j\in\{0,2^j\}$, $2^j\mid a_j$ and $b_j-b_{j+1}\in\{0,2^j\}$ and $2^j\mid b_j$.

To prove this, one first establishes the special case that $a<2^K\leq b<2^{K+1}$ for some $K$ and
obtains the general case by adding a multiple of $2^{K+1}$.
We skip the details of the proof since we will return to a very similar problem in Section~\ref{subsection_Zeckendorf}.
We can therefore decompose $L$ into intervals of the form
$[\ell2^\lambda,(\ell+1)2^\lambda)$
in such a way that
for each $\lambda$ there are at most $2$ such intervals of length $2^\lambda$.
From this we obtain, using~(\ref{eqn:sinprod}), that
\begin{equation}\label{eqn:monotone_bound}\nonumber
    \abs{\sum_{m\in L}(-1)^{s_2(m)}\e(m\theta)}
  \ll
    \sum_{0\leq \lambda\leq \frac{\log\abs{L}}{\log 2}}2^\lambda
    \prod_{0\leq k<\lambda}
    \abs{\sin\p{2^k\pi \theta}}.
\end{equation}
By Lemma~\ref{lem_FM} (note that in particular $2\rho>1$) this implies
\begin{multline}\label{eqn:integral_to_exponent}\nonumber
    \int_0^1{
      \sup_{
          x\geq 0
      }{
        \abs{
          \sum_{x<m\leq x+z}{
            (-1)^{s_2(m)}\e\p{m\theta}
          }
        }
      }
    }
    \ud \theta
  \ll
    \sum_{\lambda\leq \frac{\log (z+1)}{\log 2}}
    2^\lambda
    \int_0^1{
      \prod_{k<\lambda}
      \abs{\sin\p{2^k\pi \theta}}
    }
    \ud \theta
\\
  \ll
    \sum_{\lambda\leq \frac{\log (z+1)}{\log 2}}{
      2^\lambda
      \rho^\lambda
    }
  \ll
    (2\rho)^{\frac{\log (z+1)}{\log 2}+1}
  \ll
    (2\rho)^{\frac{\log z}{\log 2}}
  =
    z^{1+\frac{\log\rho}{\log 2}}
\end{multline}
for all $z\geq 1$.
\end{proof}
\begin{proof}[Proof of Theorem~\ref{app_1}]
Note first that
$1+\frac{\log\rho}{\log 2}<0.4076$
according to the estimate $\rho<0.6632$.
Combining Proposition~\ref{prp_app_1} and Corollary~\ref{cor_1} we get the following statement:
there exists $a<0.4076$ such that
for all $c\in (1,2)$ and all $\eta\in\p{0,\frac{2-(a+1)c}{3-a}}$
there exists $C$ such that
for all $N\geq 2$ we have
\begin{equation}\label{eqn:nc_substitution_rule}
    \frac 1N
    \abs{
        \sum_{1\leq n\leq N}{
          (-1)^{s_2\p{\floor{n^c}} }
        }
      -
        \frac 1c
        \sum_{1\leq m\leq N^c}{
          (-1)^{s_2(m)}m^{\frac 1c-1}
        }
    }
  \leq
    C
    N^{-\eta}
.
\end{equation}
To prove the main statement, it remains to eliminate the second sum
in this inequality.
For all nonnegative integers $K$ we have
$\sum_{m<2K}(-1)^{s_2(m)}=0$,
therefore it follows by partial summation that
\[
    \frac 1N
    \sum_{1\leq m\leq N^c}{
      (-1)^{s_2(m)}m^{\frac 1c-1}
    }
  \ll
    \frac 1N
    \p{N^c}^{\frac 1c-1}
    \sup_{1\leq u\leq N^c}{
      \abs{
        \sum_{1\leq m\leq u}{
          (-1)^{s_2(m)}
        }
      }
    }
  \ll
    N^{-c}
.
\]
This quantity is dominated by the error term, so we may remove the second sum in~(\ref{eqn:nc_substitution_rule}).
To finish the proof, we note
that $2-(a+1)c>0$
and $\frac{7-5c}{9}<\frac{2-(a+1)c}{3-a}$
for $c\leq 1.42$ and $a<0.4076$.
\end{proof}
We remark that our method even yields a value around $1.425$ for the upper bound on $c$,
if indeed $\rho$ is around $0.661$ as the computations suggest.
In~\cite[p.579]{FM96}, an analogous remark on the dependence of a parameter on $\rho$ is made.
\subsection{The joint distribution of sum-of-digits functions}
For integers $q\geq 2$ and $n\geq 0$ we denote by $s_q(n)$ the sum-of-digits of $n$ in base $q$.
In this section we prove the following independence result of sum-of-digits functions
with respect to coprime bases $q_1$ and $q_2$.
\begin{thm}[Joint distribution of sum-of-digits functions on $\floor{n^c}$]\label{app_2}
Let $q_1,q_2\geq 2$, $m_1,m_2,\geq 1$ and $l_1,l_2$ be integers
such that $(q_1,q_2)=1$, $(m_1,q_1-1)=1$ and $(m_2,q_2-1)=1$.
Let $1<c<18/17$. There exists $\varepsilon>0$ such that
\begin{multline}\label{eqn:joint}
    \abs{
      \{n\leq x:
        s_{q_1}\p{\floor{n^c}}\equiv l_1\bmod m_1
        \text{ and }
        s_{q_2}\p{\floor{n^c}}\equiv l_2\bmod m_2
      \}
    }
\\
  =
    \frac{x}{m_1m_2}
  +
    O\p{x^{1-\varepsilon}}
.
\end{multline}
\end{thm}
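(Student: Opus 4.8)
The plan is to reduce the counting problem in~\eqref{eqn:joint} to an estimate of the form provided by Corollary~\ref{cor_1}, applied simultaneously to suitable $q$-multiplicative functions. First I would detect the congruence conditions by characters: writing $e_m(t)=\e(t/m)$, we have
\[
    \mathbf 1[s_{q_1}(k)\equiv l_1\bmod m_1]\,\mathbf 1[s_{q_2}(k)\equiv l_2\bmod m_2]
  =
    \frac 1{m_1m_2}
    \sum_{0\leq j_1<m_1}\sum_{0\leq j_2<m_2}
    e_{m_1}\!\p{j_1(s_{q_1}(k)-l_1)}
    e_{m_2}\!\p{j_2(s_{q_2}(k)-l_2)}
.
\]
Substituting $k=\floor{n^c}$, summing over $n\leq x$ and extracting the main term (the pair $j_1=j_2=0$, which contributes $x/(m_1m_2)+O(1)$), the theorem follows once we show that for each pair $(j_1,j_2)\neq(0,0)$ the sum
\[
    \sum_{n\leq x}
    \varphi_{j_1,j_2}\p{\floor{n^c}},
  \qquad
    \varphi_{j_1,j_2}(k)=e_{m_1}\!\p{j_1 s_{q_1}(k)}\,e_{m_2}\!\p{j_2 s_{q_2}(k)}
,
\]
is $O(x^{1-\varepsilon})$. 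The arithmetic hypotheses $(m_i,q_i-1)=1$ guarantee that, when $j_i\not\equiv 0$, the one-base function $e_{m_i}(j_i s_{q_i}(\cdot))$ is a nontrivial $q_i$-multiplicative function, which is the standard input for getting cancellation.

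By Corollary~\ref{cor_1} it suffices to verify hypothesis~\eqref{eqn:cor_expsum_integral} for $\varphi=\varphi_{j_1,j_2}$ with some exponent $a\in(0,1]$ good enough that $\frac{2-(a+1)c}{3-a}>0$ on the range $1<c<18/17$; since $18/17$ forces $a$ close to but below $\frac 2c-1\approx\frac{16}{18}$, it will be enough to obtain~\eqref{eqn:cor_expsum_integral} with any $a<\frac{17}{18}$, and in fact I expect the natural bound to be of the shape $z^{a+\varepsilon}$ with $a$ slightly above $8/9$ coming from the coprimality of $q_1,q_2$. So the heart of the matter is:
\[
    \int_0^1
      \sup_{x\geq 0}
      \abs{
        \sum_{x<m\leq x+z}
          \varphi_{j_1,j_2}(m)\,\e(m\theta)
      }
    \ud\theta
  \ll
    z^{a+\varepsilon}
.
\]
This is where the discrete Fourier coefficients of the sum-of-digits function enter, as announced in the introduction. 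The strategy is to van der Corput / Fourier-expand each factor $e_{m_i}(j_i s_{q_i}(m))$ on blocks of length $q_i^{\lambda_i}$: on such a block the function is, up to a unimodular constant, a linear combination of additive characters $\e(m a/q_i^{\lambda_i})$, and the Fourier coefficients of $e_{m_i}(j_i s_{q_i}(\cdot))$ decay like a power $q_i^{-c_i\lambda_i}$ with $c_i>0$ because $q_i\geq 2$ and the function is nontrivial. Since $(q_1,q_2)=1$, one can choose $\lambda_1,\lambda_2$ with $q_1^{\lambda_1}\asymp q_2^{\lambda_2}\asymp z$ and use the Chinese Remainder Theorem to combine the two Fourier expansions on a common block of length $q_1^{\lambda_1}q_2^{\lambda_2}$; this reduces the exponential sum with $\e(m\theta)$ to a (double) sum of geometric progressions whose $L^1$-norm in $\theta$ is controlled, the product of the two Fourier decay factors producing exactly the power saving $z^{-\delta}$, $\delta=1-a>0$. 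I would also need the dyadic decomposition of an arbitrary interval into at most $O(1)$ blocks of each admissible length (the $q_1^{\lambda_1}q_2^{\lambda_2}$-analogue of the argument in the proof of Proposition~\ref{prp_app_1}), which is routine and is deferred, like there, to the Zeckendorf section.

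Finally, once~\eqref{eqn:cor_expsum_integral} is established with the requisite $a$, Corollary~\ref{cor_1} gives
\[
    \sum_{1\leq n\leq N}\varphi_{j_1,j_2}\p{\floor{n^c}}
  =
    \frac 1c
    \sum_{1\leq m\leq N^c}\varphi_{j_1,j_2}(m)\,m^{\frac1c-1}
  +
    O\p{N^{1-\eta}}
\]
for every $\eta\in\bigl(0,\tfrac{2-(a+1)c}{3-a}\bigr)$, and the remaining main-term sum $\sum_{m\leq N^c}\varphi_{j_1,j_2}(m)m^{1/c-1}$ is disposed of by partial summation using the classical estimate $\sum_{m\leq u}\varphi_{j_1,j_2}(m)\ll u^{1-\kappa}$ for nontrivial $(j_1,j_2)$ (again a consequence of the nontriviality of the two $q_i$-multiplicative factors together with $(q_1,q_2)=1$), which makes that contribution $O(N^{1-\kappa})$ and hence absorbed. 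Summing over the finitely many pairs $(j_1,j_2)$ and taking $x=N$ yields~\eqref{eqn:joint} with $\varepsilon=\min\{\eta,\kappa\}>0$. I expect the main obstacle to be the joint Fourier-analytic estimate in the second paragraph: keeping the power of $z$ small enough to survive up to $c=18/17$ requires the decay exponents $c_1,c_2$ from the one-base Fourier coefficients to be used essentially optimally and the two-base combination via CRT to lose nothing, so the bookkeeping of the exponents — rather than any single hard inequality — is the delicate point.
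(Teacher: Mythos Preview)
Your overall architecture --- detect the congruences by additive characters, feed each nontrivial character $\varphi_{j_1,j_2}$ into Corollary~\ref{cor_1}, then kill the weighted sum $\sum_m\varphi_{j_1,j_2}(m)m^{1/c-1}$ by partial summation --- is exactly what the paper does, and the identification of $a=8/9$ as the exponent that produces the threshold $c<18/17$ is correct (your side remark ``any $a<17/18$'' is a slip: one needs $a\leq 8/9$, since $2/(a+1)=18/17$ precisely when $a=8/9$).

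The gap is in your plan for the integral estimate. There is no ``$q_1^{\lambda_1}q_2^{\lambda_2}$-adic'' decomposition analogous to the dyadic one in Proposition~\ref{prp_app_1}: for $s_{q_1}$ to split as a constant plus $s_{q_1,\lambda_1}$ you need a block aligned to $q_1^{\lambda_1}$, and for $s_{q_2}$ you need alignment to $q_2^{\lambda_2}$; a block aligned to both has length a multiple of $q_1^{\lambda_1}q_2^{\lambda_2}$, and with your choice $q_1^{\lambda_1}\asymp q_2^{\lambda_2}\asymp z$ this is $\asymp z^2$, so no such block fits inside an interval of length $z$. More generally, the nesting that makes single-base dyadic decomposition work fails for two coprime bases, so this step is not ``routine''.

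The paper (Proposition~\ref{prp_independence_exponential_sum}) circumvents this not by a block decomposition but by van~der~Corput's inequality (Lemma~\ref{lem_vdc}): after squaring and differencing one is left with $s_{q_i}(n+r)-s_{q_i}(n)$, and for $|r|<R$ this difference genuinely coincides with its truncated version $s_{q_i,\lambda_i}(n+r)-s_{q_i,\lambda_i}(n)$ for all but $O(zR/q_i^{\lambda_i})$ values of $n$ (Lemma~\ref{lem_f_lambda}) --- no alignment of the interval is needed. Only then does one Fourier-expand the truncated, genuinely periodic functions and invoke the Chinese Remainder Theorem to collapse the $n$-sum; optimising $R\asymp z^{2/9}$ and $q_i^{\lambda_i}\asymp z^{4/9}$ yields the exponent $8/9$. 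Your passing mention of ``van der Corput'' is the right keyword, but it has to replace, not supplement, the block-decomposition idea.
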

Generalizing this theorem (and its proof) to more than two bases is straightforward,
however the upper bound on $c$ that we can obtain using our method has then to be adjusted.
In order to prove Theorem~\ref{app_2}, we estimate the relevant integral
as well as the integrand at $\theta=0$.
\begin{prp}\label{prp_independence_exponential_sum}
Let $q_1,q_2\geq 2$ be relatively prime integers.
There exists $C=C(q_1,q_2)$ such that for all $\alpha,\beta\in\dR$ and $z\geq 1$ we have
\begin{equation}\label{eqn:independence_exponential_sum}
    \int_0^1{
      \sup_{x\geq 0}{
        \abs{
          \sum_{x<n\leq x+z}{
            \e\p{\alpha s_{q_1}(n)+\beta s_{q_2}(n)+n\theta}
          }
        }
      }
    }
    \ud\theta
  \leq
    C
    z^{8/9}
.
\end{equation}
Moreover, we have
\begin{equation}\label{eqn:independence_sup_bound}
  \sup_{x\geq 0}{
    \abs{
      \sum_{x<n\leq x+z}{
        \e\p{\alpha s_{q_1}(n)+\beta s_{q_2}(n)}
      }
    }
  }
\leq
  C_1z^{1-\eta(\alpha)}
\end{equation}
for $z\geq 1$,
where $\eta(\alpha)=\frac{\Abs{(q_1-1)\alpha}^2}{15\log q_1}$
and $C_1$ may depend on $\alpha,\beta,q_1$ and $q_2$.
\end{prp}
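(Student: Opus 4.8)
The plan is to reduce the sum over a window $(x,x+z]$ to sums over full $q$-adic blocks, then decouple the two bases via the coprimality of $q_1$ and $q_2$, and finally estimate the resulting product of one-base exponential sums. First I would carry out a dyadic decomposition of the interval $(x,x+z]$, exactly in the spirit of the argument for the Thue-Morse sequence in Proposition~\ref{prp_app_1}: the key point is that both sum-of-digits functions behave multiplicatively along blocks, but now only along blocks whose length is a common power, i.e.\ along intervals of the form $[\ell Q^\lambda,(\ell+1)Q^\lambda)$ with $Q=q_1q_2$ (or, more carefully, along intervals of length $q_1^{a}q_2^{b}$ that are aligned to both radices). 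Using the digit-additivity $s_{q_i}(q_i^k a+b)=s_{q_i}(a)+s_{q_i}(b)$ one factors the sum over such a block as a product of complete sums $\sum_{0\le d<q_i}\e(\alpha_i d)$-type factors coming from the $q_i$-digits, times a contribution from the other base's digits which we treat trivially. The number of blocks needed is $O(\log z)$ per scale, so after this reduction it suffices to bound, for each $\lambda$ with $Q^\lambda\le z$, the integral over $\theta$ of the complete block sum of length roughly $Q^\lambda$, and then sum a geometric-type series in $\lambda$.

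For the first inequality~(\ref{eqn:independence_exponential_sum}) the heart of the matter is the $\theta$-integral of the block sum. Here I would invoke the coprimality of $q_1$ and $q_2$: a complete residue system modulo $q_1^a q_2^b$ is a product of complete residue systems modulo $q_1^a$ and modulo $q_2^b$ by CRT, so the block sum factors (up to controllable error) as a product of a sum involving $\e(\alpha s_{q_1}(n_1)+n_1\theta q_2^b)$ over $n_1<q_1^a$ and a sum involving $\e(\beta s_{q_2}(n_2)+n_2\theta q_1^a)$ over $n_2<q_2^b$. Each of these one-base exponential sums with a linear term is a classical object: $\bigl|\sum_{n<q^k}\e(\alpha s_q(n)+n\vartheta)\bigr|=\prod_{0\le j<k}\bigl|\sum_{0\le d<q}\e(\alpha d+d q^j\vartheta)\bigr|$, a product of $k$ factors each of which is a geometric sum of modulus $\le q$ and on average (integrating in $\vartheta$) considerably smaller. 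One then uses an $L^1$-in-$\theta$ bound of Mauduit–Rivat type (as in~\cite{MR09,MR10}) for such products to extract a power saving $z^{8/9}$; the exponent $8/9$ should come out of balancing the number of scales $O(\log z)$ against the per-scale gain and the dyadic loss, exactly as $(2\rho)^{\log z/\log 2}$ arose in Proposition~\ref{prp_app_1}. The numerology $8/9$ (rather than something sharper) reflects that here we only have coprimality to play with and no analogue of the sharp constant $\rho$.

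For the second inequality~(\ref{eqn:independence_sup_bound}), which is the integrand at $\theta=0$, there is no $\theta$-integration and the job is a pointwise bound on $\sum_{x<n\le x+z}\e(\alpha s_{q_1}(n)+\beta s_{q_2}(n))$. After the same dyadic reduction, a complete block of length $q_1^a q_2^b$ contributes (via CRT) a product $\bigl(\prod_{j<a}\sum_{d<q_1}\e(\alpha d)\bigr)\bigl(\prod_{j<b}\sum_{d<q_2}\e(\beta d)\bigr)$, so the $q_1$-part alone is $\prod_{j<a}\bigl|\sum_{d<q_1}\e(\alpha d)\bigr|=\bigl|\tfrac{\sin\pi q_1\alpha}{\sin\pi\alpha}\bigr|^{a}$ up to the $\beta$-factor which is $\le q_2^b$. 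A standard lower bound $\bigl|\tfrac{\sin\pi q_1\alpha}{\sin\pi\alpha}\bigr|\le q_1\exp(-c\Abs{(q_1-1)\alpha}^2)$ for an absolute $c$ (the $(q_1-1)$ appearing because the sum has $q_1$ terms with common difference, so it is $\Abs{q_1\alpha-\Abs{\alpha}}$ that matters, morally $\Abs{(q_1-1)\alpha}$) then gives the exponent $1-\eta(\alpha)$ with $\eta(\alpha)=\Abs{(q_1-1)\alpha}^2/(15\log q_1)$; the constant $15$ is whatever absolute constant falls out of that elementary trigonometric estimate after accounting for the $O(\log z)$ scales and the dyadic loss.

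The main obstacle I anticipate is the bookkeeping of the dyadic/mixed-radix decomposition when $q_1$ and $q_2$ are different: an interval aligned to the $q_1$-adic grid is generally not aligned to the $q_2$-adic grid, so "complete block" has to be interpreted as a block of length $q_1^a q_2^b$ starting at a multiple of $q_1^a q_2^b$, and one must check that an arbitrary window $(x,x+z]$ decomposes into $O(\log z)$ such blocks — this is the analogue of the decomposition lemma stated (and deferred to Section~\ref{subsection_Zeckendorf}) in the Thue-Morse proof, and it is the place where the proof is least automatic. Once that combinatorial skeleton is in place, the analytic estimates are routine applications of the product formula for one-base digit exponential sums and the Mauduit–Rivat $L^1$-bounds.
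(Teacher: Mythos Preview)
Your plan has a genuine gap at the step you treat as routine, namely the CRT factoring of the block sum. On a block $[\ell Q^\lambda,(\ell+1)Q^\lambda)$ with $Q=q_1q_2$ (or more generally an interval of length $q_1^aq_2^b$ starting at a multiple of $q_1^aq_2^b$), write $n=\ell Q^\lambda+m$ with $0\le m<Q^\lambda$. The additivity $s_{q_1}(q_1^ka+b)=s_{q_1}(a)+s_{q_1}(b)$ requires $b<q_1^k$, but here $m$ ranges up to $q_1^\lambda q_2^\lambda-1\gg q_1^\lambda$, so $s_{q_1}(\ell Q^\lambda+m)\ne s_{q_1}(\ell Q^\lambda)+s_{q_1}(m)$ in general: carries propagate past position $\lambda$. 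Equivalently, $s_{q_1}(m)$ is \emph{not} a function of $m\bmod q_1^\lambda$ on $[0,Q^\lambda)$, so the CRT bijection $m\leftrightarrow(m\bmod q_1^\lambda,\,m\bmod q_2^\lambda)$ does not make the summand factor. The claimed product formula for the block sum, and with it the whole argument for both~(\ref{eqn:independence_exponential_sum}) and~(\ref{eqn:independence_sup_bound}), therefore collapses. (The combinatorial decomposition into $O(\log z)$ $Q$-adic blocks, which you flagged as the main obstacle, is actually fine; it is the analytic core that fails.)

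The paper's proof avoids this by never attempting to make $s_{q_i}(n)$ itself periodic. Instead it applies van der Corput's inequality to produce differences $s_{q_i}(n+r)-s_{q_i}(n)$, then uses a carry-propagation lemma (Lemma~\ref{lem_f_lambda}) to replace these differences by their truncated analogues $s_{q_i,\lambda_i}(n+r)-s_{q_i,\lambda_i}(n)$ for most $n$. The truncated function $s_{q_i,\lambda_i}$ \emph{is} $q_i^{\lambda_i}$-periodic, so one can expand in discrete Fourier series and only then invoke CRT to isolate the diagonal contribution; the Fourier coefficients are controlled by Lemma~\ref{lem_fourier_coeff_estimate}. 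The exponents $8/9$ and $\eta(\alpha)$ come from balancing the van der Corput parameter $R$ against the truncation levels $\lambda_1,\lambda_2$, not from a geometric sum over scales.
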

In the proof of this proposition we make use of the truncated sum-of-digits function
$s_{q,\lambda}$, which adds up the first $\lambda$ digits
of the base-$q$ representation of a nonnegative integer $n$.
That is,
if $n=\sum_{i\geq 0}\varepsilon_iq^i$ and $\varepsilon_i\in\{0,\ldots,q-1\}$ for all $i$, then
\[
    s_{q,\lambda}(n)
  =
    \sum_{0\leq i<\lambda}\varepsilon_i
  =
    s_q\p{n\bmod q^\lambda}
.
\]
For convenience we extend $s_{q,\lambda}$ to a $q^\lambda$-periodic function on $\dZ$.
By periodicity, we can represent the function $\e\p{\alpha s_{q,\lambda}(n)}$
with the aid of the discrete Fourier transform.
For integers $q\geq 2$, $\lambda\geq 0$ and $n$ we have
\begin{equation}\label{eqn:fourier_transform}
\e\p{\alpha s_{q,\lambda}(n)}
=
\sum_{h<q^\lambda}\e\p{hnq^{-\lambda}}F_{q,\lambda}(h,\alpha)
\end{equation}
and
\begin{equation}\label{eqn:fourier_transform2}
\e\p{-\alpha s_{q,\lambda}(n)}
=
\sum_{h<q^\lambda}\e\p{hnq^{-\lambda}}\overline{F_{q,\lambda}(-h,\alpha)}
,
\end{equation}
where
\begin{equation*}
F_{q,\lambda}(h,\alpha)
=
\frac 1{q^\lambda}\sum_{u<q^\lambda}\e\p{\alpha s_{q,\lambda}(u)-huq^{-\lambda}}
.
\end{equation*}
The Fourier coefficients $F_{q,\lambda}(h,\alpha)$ may be estimated uniformly in $h$
using the following lemma~(\cite[Lemme 9]{MR09}).
\begin{lem}\label{lem_fourier_coeff_estimate}
Let $q,\lambda \geq 2$ and $h$ be integers and $\alpha\in\dR$. Then
\begin{equation}\label{eqn:fourier_coeff_estimate}\nonumber
\abs{F_{q,\lambda}(h,\alpha)}
\leq
\e^{\pi^2/48}q^{-c_q\Abs{(q-1)\alpha}^2\lambda}
,
\end{equation}
where
\begin{equation}\label{eqn:fourier_coeff_exponent}\nonumber
c_q=\frac{\pi^2}{12\log q}\p{1-\frac{2}{q+1}}
.
\end{equation}
\end{lem}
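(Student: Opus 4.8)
The plan is to reduce the estimate to a product bound for geometric sums, exploiting that both $s_{q,\lambda}$ and the map $u\mapsto uq^{-\lambda}$ act linearly on the base-$q$ digits of $u$. Writing $u=\sum_{0\le i<\lambda}u_iq^i$ with digits $u_i\in\{0,\dots,q-1\}$ we have $s_{q,\lambda}(u)=\sum_i u_i$ and $uq^{-\lambda}=\sum_i u_iq^{i-\lambda}$, hence $\alpha s_{q,\lambda}(u)-huq^{-\lambda}=\sum_i u_i\p{\alpha-hq^{i-\lambda}}$, so the sum defining $F_{q,\lambda}(h,\alpha)$ factors over the digits:
\[
    F_{q,\lambda}(h,\alpha)
  =
    \prod_{0\le i<\lambda}{\frac 1q\sum_{0\le b<q}\e\p{b\p{\alpha-hq^{i-\lambda}}}}.
\]
Putting $\gamma_i=\alpha-hq^{i-\lambda}$ and summing each geometric progression gives $\abs{F_{q,\lambda}(h,\alpha)}=\prod_{0\le i<\lambda}\Phi_q(\gamma_i)$, where $\Phi_q(t)=\abs{\sin\p{\pi qt}}/\p{q\abs{\sin\p{\pi t}}}$ for $t\notin\dZ$ and $\Phi_q(t)=1$ for $t\in\dZ$.

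Next I would prove a sharp pointwise bound for the kernel $\Phi_q$. From the factorization $\Phi_q(t)=\prod_{1\le k<q}\abs{1-\sin^2\p{\pi t/2}/\sin^2\p{\pi k/(2q)}}$ and the classical evaluation $\sum_{1\le k<q}\sin^{-2}\p{\pi k/(2q)}=2\p{q^2-1}/3$ one reads off $\Phi_q(t)=1-\kappa_q t^2+O\p{t^4}$ as $t\to0$, with the \emph{optimal} constant $\kappa_q=\pi^2\p{q^2-1}/6$. The aim is to upgrade this to: $\Phi_q(t)\le\exp\p{-\kappa_q\Abs{t}^2}$ for all $t$ with $\Abs{t}\le\varepsilon_q$, where $\varepsilon_q$ is fixed by $\kappa_q\varepsilon_q^2=\log q$; and, away from that range, the cruder bound $\Phi_q(t)\le\vartheta_q$ for $\Abs{t}\ge\varepsilon_q$, with $\vartheta_q<1$ bounded away from $1$ uniformly in $q$ (the kernel is already small once $\Abs{t}$ exceeds a constant multiple of $1/q$).

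To combine these with the product over $i$, note that the $\gamma_i$ cannot all lie near $\dZ$: from $\gamma_{i+1}=q\gamma_i-(q-1)\alpha$ we get $\Abs{q\gamma_i-\gamma_{i+1}}=\Abs{(q-1)\alpha}$, hence $q\Abs{\gamma_i}+\Abs{\gamma_{i+1}}\ge\Abs{(q-1)\alpha}$ and so $\max\p{\Abs{\gamma_i},\Abs{\gamma_{i+1}}}\ge\Abs{(q-1)\alpha}/(q+1)$ for every $i$. Therefore the set of indices $i<\lambda$ with $\Abs{\gamma_i}<\Abs{(q-1)\alpha}/(q+1)$ contains no two consecutive integers, so at least $\floor{\lambda/2}$ of the $\gamma_i$ satisfy $\Abs{\gamma_i}\ge\Abs{(q-1)\alpha}/(q+1)$. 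Keeping only those factors in the product (the rest being $\le1$): in the range $\Abs{(q-1)\alpha}/(q+1)\le\varepsilon_q$ the Gaussian bound gives
\[
    \abs{F_{q,\lambda}(h,\alpha)}
  \le
    \exp\p{-\kappa_q\frac{\Abs{(q-1)\alpha}^2}{(q+1)^2}\floor{\lambda/2}}
,
\]
and since $\kappa_q/(q+1)^2=\tfrac{\pi^2}{6}\cdot\tfrac{q-1}{q+1}$, $\floor{\lambda/2}\ge(\lambda-1)/2$ and $\Abs{(q-1)\alpha}^2\le\tfrac14$, the right side is $\le\exp\p{\tfrac{\pi^2}{48}\cdot\tfrac{q-1}{q+1}}q^{-c_q\Abs{(q-1)\alpha}^2\lambda}\le\e^{\pi^2/48}q^{-c_q\Abs{(q-1)\alpha}^2\lambda}$ with exactly $c_q=\tfrac{\pi^2}{12\log q}\p{1-\tfrac2{q+1}}$; in the complementary range these $\floor{\lambda/2}$ factors are each $\le\vartheta_q$, which yields an even smaller bound since $c_q$ is small. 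That gives the claim.

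The main obstacle is the sharp kernel bound in the second step: the constant $\kappa_q=\pi^2(q^2-1)/6$ must be the optimal one, since the naive estimate $\sin^2\pi x\ge4\Abs{x}^2$ only yields $\kappa_q=2(q^2-1)/3$, short by the factor $\pi^2/4$, which would not reproduce $c_q$. Establishing $\Phi_q(t)\le\exp\p{-\kappa_q\Abs{t}^2}$ with this constant throughout $\Abs{t}\le\varepsilon_q$ (and the easy matching bound for larger $\Abs{t}$), uniformly in $q$, is the genuinely technical part; it is exactly the content of the cited Lemme 9 of~\cite{MR09}, which we may invoke. With it in hand, the bookkeeping in the remaining steps is routine.
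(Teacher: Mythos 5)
The paper does not prove this lemma at all: it is quoted verbatim as Lemme~9 of~\cite{MR09} and used as a black box, so there is no in-paper argument to compare against. Your sketch is a plausible reconstruction of the likely internal proof, and the bookkeeping is correct: the digit-by-digit factorization of $F_{q,\lambda}(h,\alpha)$ into a product of normalized Dirichlet kernels $\Phi_q(\gamma_i)$, the identity $\gamma_{i+1}=q\gamma_i-(q-1)\alpha$ giving $\Abs{(q-1)\alpha}\leq q\Abs{\gamma_i}+\Abs{\gamma_{i+1}}$ and hence $\max\p{\Abs{\gamma_i},\Abs{\gamma_{i+1}}}\geq\Abs{(q-1)\alpha}/(q+1)$, the count of at least $\floor{\lambda/2}$ ``good'' indices, and the arithmetic $\kappa_q/(q+1)^2=\tfrac{\pi^2}{6}\cdot\tfrac{q-1}{q+1}$, $\floor{\lambda/2}\geq(\lambda-1)/2$, $\Abs{(q-1)\alpha}^2\leq\tfrac 14$ all combine to reproduce $c_q=\tfrac{\pi^2}{12\log q}\p{1-\tfrac 2{q+1}}$ with the prefactor $\e^{\pi^2/48}$ exactly.

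The one genuine defect is the closing appeal to authority: you punt the key kernel estimate $\Phi_q(t)\leq\exp\p{-\kappa_q\Abs{t}^2}$ with the optimal $\kappa_q=\pi^2(q^2-1)/6$ to ``the cited Lemme~9 of~\cite{MR09}'', but the present lemma \emph{is} Lemme~9, so that is circular. What you actually need to cite or prove is a pointwise bound on the single Dirichlet kernel (a sub-lemma inside the Mauduit--Rivat proof, not its final statement), and as you correctly note the crude $\sin^2(\pi x)\geq 4\Abs{x}^2$ would lose a factor $\pi^2/4$ and fail to reproduce $c_q$. Since both the paper and your sketch ultimately lean on~\cite{MR09} for this point, the comparison is somewhat moot, but your reduction of the lemma to that single pointwise estimate, and everything surrounding it, is sound.
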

We prove the following lemma on the truncated sum-of-digits function,
which is a way of expressing the idea that addition of an integer $r$ to $n$
should only change digits at low positions in most cases.
\begin{lem}\label{lem_f_lambda}
Let $q\geq 2$, $\lambda\geq 0$ and $r$ be integers and
let $I$ be a finite interval in $\dN$ such that $I+r\subseteq \dN$.
Then
\[
  \abs{\{n\in I:s_q(n+r)-s_q(n)\neq s_{q,\lambda}(n+r)-s_{q,\lambda}(n)\}}
\leq
  \abs{I}\frac{\abs{r}}{q^\lambda}+\abs{r}
.
\]
\end{lem}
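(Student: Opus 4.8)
The plan is to understand precisely when adding $r$ to $n$ affects the high-order digits (position $\geq\lambda$), and to show that this can only happen when $n$ lies near a multiple of $q^\lambda$ (within distance $\abs{r}$ of a "carry boundary"), together with a global count of such boundaries. First I would observe that for any integer $n$ we can split $n = q^\lambda\lfloor n/q^\lambda\rfloor + (n\bmod q^\lambda)$, so that $s_q(n) = s_q(\lfloor n/q^\lambda\rfloor) + s_{q,\lambda}(n)$, and similarly $s_q(n+r) = s_q(\lfloor (n+r)/q^\lambda\rfloor) + s_{q,\lambda}(n+r)$. Subtracting, the desired identity $s_q(n+r)-s_q(n) = s_{q,\lambda}(n+r)-s_{q,\lambda}(n)$ holds exactly when $s_q(\lfloor (n+r)/q^\lambda\rfloor) = s_q(\lfloor n/q^\lambda\rfloor)$, and in particular it holds whenever $\lfloor (n+r)/q^\lambda\rfloor = \lfloor n/q^\lambda\rfloor$, i.e.\ whenever $n$ and $n+r$ lie in the same block $[kq^\lambda,(k+1)q^\lambda)$.

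So the "bad" set is contained in the set of $n\in I$ for which $n$ and $n+r$ straddle a multiple of $q^\lambda$ (in either direction, according to the sign of $r$). The next step is to count these: for each integer $k$, the $n$ with $\lfloor n/q^\lambda\rfloor \neq \lfloor (n+r)/q^\lambda\rfloor$ crossing the boundary $kq^\lambda$ form an interval of length at most $\abs{r}$ (namely those $n$ in $[kq^\lambda - \abs{r}, kq^\lambda)$ if $r>0$, and an analogous interval if $r<0$). The number of multiples of $q^\lambda$ that are relevant to $I$ — i.e.\ that could be straddled by some $n\in I$ together with $n+r$ — is at most $\abs{I}/q^\lambda + 1$: the interval $I$ meets at most $\lfloor\abs{I}/q^\lambda\rfloor + 1$ consecutive blocks, hence at most that many block boundaries are in play. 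Multiplying the per-boundary bound $\abs{r}$ by the number of boundaries $\abs{I}/q^\lambda + 1$ yields exactly $\abs{I}\abs{r}/q^\lambda + \abs{r}$, which is the claimed bound.

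The only point requiring care — and the step I expect to be the mildly delicate one — is the bookkeeping of the sign of $r$ and the off-by-one count of how many blocks $I$ meets, to make sure the clean bound $\abs{I}/q^\lambda + 1$ (rather than $+2$) suffices; this is just a matter of choosing the half-open convention for the blocks consistently with the half-open interval $I$ and noting that a boundary contributes only if it is genuinely crossed by a pair $(n,n+r)$ with $n\in I$. Everything else is immediate from the additivity of $s_q$ across the $q^\lambda$-splitting. I would also remark that the trivial bound $\abs{I}$ already settles the case $\abs{r}\geq q^\lambda$ or $\abs{I}\leq\abs{r}$, so one may freely assume $\abs{r}<q^\lambda$ and $\abs{I}>\abs{r}$ if that simplifies the exposition, though it is not necessary.
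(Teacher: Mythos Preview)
Your proposal is correct and follows essentially the same approach as the paper: both split $n=q^\lambda\lfloor n/q^\lambda\rfloor+(n\bmod q^\lambda)$ to see that the identity can only fail when adding $r$ pushes $n$ across a multiple of $q^\lambda$, then count such $n$. The paper reduces to $r\geq 0$ by shifting $I$ and phrases the count directly in terms of residue classes (the bad $n$ satisfy $n\bmod q^\lambda\in\{q^\lambda-r,\ldots,q^\lambda-1\}$), which sidesteps the ``$+1$ versus $+2$'' boundary-bookkeeping you flagged; you may find that phrasing cleaner when writing up the delicate step.
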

\begin{proof}
It is sufficient to assume that $r$ is nonnegative,
since the other case then follows by shifting the interval $I$.

For a nonnegative integer $n$, there exist unique $t$ and $u$ such that $n=tq^\lambda+u$,
where $u<q^\lambda$. Clearly we have $s_q(n)=s_q(t)+s_q(u)$ and $s_{q,\lambda}(n)=s_q(u)$.
If $n\equiv k \bmod q^\lambda$ for some $k$ such that $0\leq k<q^\lambda-r$,
then $s_q(n+r)=s_q(t)+s_q(u+r)$ and $s_{q,\lambda}(n+r)=s_q(u+r)$,
therefore $s_q(n+r)-s_q(s)=s_{q,\lambda}(n+r)-s_{q,\lambda}(n)$.
It remains therefore to show that
\hfill\\
$\abs{\{n\in I:
q^\lambda-r\leq n\bmod q^\lambda<q^\lambda
\}}$ $\leq \abs{I}r/q^\lambda+r$,
which is not difficult.
\end{proof}
The inequality of van der Corput is well known.
For our purposes, we will employ it in the following form.
\begin{lem}\label{lem_vdc}
Let $I$ be a finite interval in $\dZ$ and
let $a_n\in\dC$ for $n\in I$.
Then
\[
\abs{\sum_{n\in I}a_n}^2\leq
\frac{\abs{I}-1+R}R\sum_{0\leq\abs{r}<R}\p{1-\frac{\abs{r}}R}
\sum_{\substack{n\in I\\n+r\in I}}a_{n+r}\overline{a_n}
\]
for all integers $R\geq 1$.
\end{lem}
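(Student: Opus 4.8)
The plan is to give the classical proof of van der Corput's inequality, taking care only to track the constants so that they come out exactly as stated. First I would extend the sequence by setting $a_n=0$ for every $n\in\dZ\setminus I$; then every sum below may be taken over $n\in\dZ$ without change. Write $S=\sum_{n\in I}a_n$ and $N=\abs{I}$.

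The first step is an averaging identity followed by Cauchy--Schwarz. For each fixed $j$ with $0\leq j<R$, shifting the summation variable gives $\sum_{m\in\dZ}a_{m-j}=S$, hence
\[
RS=\sum_{0\leq j<R}\;\sum_{m\in\dZ}a_{m-j}=\sum_{m\in\dZ}\;\sum_{0\leq j<R}a_{m-j}.
\]
The inner sum over $j$ vanishes unless $m$ lies in $J:=I+\{0,1,\ldots,R-1\}$, which is an interval consisting of exactly $N+R-1$ integers. Applying Cauchy--Schwarz on $J$ and then enlarging the outer sum to all of $\dZ$ (all terms being nonnegative) yields
\[
\abs{RS}^2\leq\abs{J}\sum_{m\in J}\Bigl|\sum_{0\leq j<R}a_{m-j}\Bigr|^2\leq(N+R-1)\sum_{m\in\dZ}\Bigl|\sum_{0\leq j<R}a_{m-j}\Bigr|^2.
\]

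The second step is to expand the square and reorganize it by the difference $r=k-j$. Writing the squared modulus as $\sum_{0\leq j<R}\sum_{0\leq k<R}a_{m-j}\overline{a_{m-k}}$, substituting $n=m-k$ so that $m-j=n+r$ with $r=k-j$, and noting that for each $r$ with $\abs{r}<R$ there are exactly $R-\abs{r}$ pairs $(j,k)$ with $k-j=r$, one obtains
\[
\sum_{m\in\dZ}\Bigl|\sum_{0\leq j<R}a_{m-j}\Bigr|^2=\sum_{0\leq\abs{r}<R}(R-\abs{r})\sum_{n\in\dZ}a_{n+r}\overline{a_n}=\sum_{0\leq\abs{r}<R}(R-\abs{r})\sum_{\substack{n\in I\\n+r\in I}}a_{n+r}\overline{a_n},
\]
the last equality because $a$ is supported on $I$. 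Substituting this into the previous display and dividing by $R^2$ gives precisely the assertion, since $(N+R-1)(R-\abs{r})/R^2=\frac{\abs{I}-1+R}{R}\p{1-\frac{\abs{r}}{R}}$.

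There is no analytic difficulty here, the inequality being entirely elementary, so the ``main obstacle'' is really just the bookkeeping in the last step: keeping the complex conjugate attached to the correct factor, verifying that each difference $r$ occurs with multiplicity exactly $R-\abs{r}$, and checking that $\abs{J}=N+R-1$ \emph{exactly} so that the constant $\abs{I}-1+R$ is not weakened. A convenient consistency check is that the double sum on the right-hand side equals $\sum_{m\in\dZ}\bigl|\sum_{0\leq j<R}a_{m-j}\bigr|^2$, which is manifestly real and nonnegative, in agreement with the left-hand side $\abs{S}^2$.
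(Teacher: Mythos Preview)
Your proof is correct and is exactly the standard argument for van der Corput's inequality. The paper itself does not give a proof of this lemma; it simply states it as ``well known'' and uses it as a black box, so there is nothing to compare against.
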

\begin{proof}[Proof of Proposition~\ref{prp_independence_exponential_sum}]
To estimate the left hand side of~(\ref{eqn:independence_exponential_sum}),
we introduce two parameters to be chosen later, $\lambda_1$ and $\lambda_2$.
Rounding off $z$ to the nearest multiple $M$ of $q_1^{\lambda_1}q_2^{\lambda_2}$
introduces an error term $O\p{q_1^{\lambda_1}q_2^{\lambda_2}}$.
Let $x\geq 0$, $z\geq 1$ and let $R\in [1,z]$ be an integer.
Then by van der Corput's inequality we get
\begin{multline*}
  \abs{
    \sum_{x<n\leq x+M}{
      \e\p{\alpha s_{q_1}(n)+\beta s_{q_2}(n)+n\theta}
    }
  }^2
\ll
  \frac zR
  \sum_{\abs{r}<R}\p{1-\frac{\abs{r}}R}
\\
  \times
  \sum_{x<n,n+r\leq x+M}
  \e\bigl(
      \alpha\p{s_{q_1}(n+r)-s_{q_1}(n)}
    +
      \beta\p{s_{q_2}(n+r)-s_{q_2}(n)}
    +
      r\theta
  \bigr)
.
\end{multline*}
Applying Lemma~\ref{lem_f_lambda} in order to
replace $s_{q_1}$ and $s_{q_2}$
by $s_{q_1,\lambda_1}$ and $s_{q_2,\lambda_2}$ respectively
and omitting the summation condition $x<n+r\leq x+M$ afterwards
we get an error term
$O\p{zR+z^2R\p{1/q_1^{\lambda_1}+1/q_2^{\lambda_2}} }$ and
after inserting equations~(\ref{eqn:fourier_transform}) and~(\ref{eqn:fourier_transform2}) it remains to estimate the quantity
\begin{multline}\label{eqn:independence_big_estimate}
  \frac z{R^2}
  \sum_{\substack{h_1,k_1<q_1^{\lambda_1}\\h_2,k_2<q_2^{\lambda_2} }}
  F_{q_1,\lambda_1}(h_1,\alpha)\overline{F_{q_1,\lambda_1}(-k_1,\alpha)}
  F_{q_2,\lambda_2}(h_2,\beta)\overline{F_{q_2,\lambda_2}(-k_2,\beta)}
\\
  \times
  \!\!\!
  \sum_{x<n\leq x+M}
  \!\!\!
  \e\p{n
    \!
    \p{\frac{h_1+k_1}{q_1^{\lambda_1}}+\frac{h_2+k_2}{q_2^{\lambda_2}} }
    \!
  }
  \sum_{\abs{r}<R}\p{R-\abs{r}}\e\p{r
  \!
  \p{\frac{h_1}{q_1^{\lambda_1}}+\frac{h_2}{q_2^{\lambda_2}}+\theta}
  \!
  }
.
\end{multline}
By our choice of $M$ and by the Chinese Remainder Theorem,
the contribution of the case that
$\p{h_1+k_1,h_2+k_2}\not\equiv (0,0)\bmod \p{q_1^{\lambda_1},q_2^{\lambda_2}}$ is $0$.
Using the identity
\[
  \sum_{\abs{r}<R}(R-\abs{r})\e(rx)
=
  \p{\sum_{r<R}\e(rx)}^2
,
\]
we see that~(\ref{eqn:independence_big_estimate}) is bounded by the expression
\begin{multline}\label{eqn:independence_first_part}
  \!\!
  \frac{z^2}{R^2}
  \!\!
  \sum_{
    \substack{
        h_1<q_1^{\lambda_1}
      \\
        h_2<q_2^{\lambda_2}
    }
  }
  \!\!
  \abs{F_{q_1,\lambda_1}(h_1,\alpha)}^2
  \abs{F_{q_2,\lambda_2}(h_2,\beta)}^2
  \abs{
    \sum_{\abs{r}<R}{
      \e\p{r\p{\frac{h_1}{q_1^{\lambda_1}}+\frac{h_2}{q_2^{\lambda_2}}+\theta}}
    }
  }^2
,
\end{multline}
which is independent of $x$.
In order to prove the first part of Proposition \ref{prp_independence_exponential_sum},
we use the Cauchy-Schwarz inequality,
Parseval's identity and
the identity
\[
\int_0^1\abs{\sum_{r\in I}\e\p{r(t+\theta)}}^2\ud\theta
=
\abs{I}
\]
and collect the error terms to arrive at the estimate
\begin{multline}
    \int_0^1{
      \sup_{x\geq 0}{
        \abs{
          \sum_{x<n\leq x+z}{
            \e\p{\alpha s_{q_1}(n)+\beta s_{q_2}(n)+n\theta}
          }
        }
      }
    }
    \ud\theta
\\
  =
    O\p{
        q_1^{\lambda_1}q_2^{\lambda_2}
      +
        z^{1/2}R^{1/2}
      +
        zR^{1/2}\p{q_1^{-\lambda_1/2}+q_2^{-\lambda_2/2}}
      +
        zR^{-1/2}
    }
,
\end{multline}
which is valid for all real $\alpha,\beta$ and $z\geq 1$
and all integers $R\in [1,z]$ and $\lambda_1,\lambda_2\geq 0$.
The implied constant is an absolute one.
This estimate is also valid for real $R,\lambda_1$ and $\lambda_2$,
however the implied constant may then depend on $q_1$ and $q_2$.
We set
\begin{gather*}
\lambda_1=\frac{4\log z}{9\log q_1},
\quad
\lambda_2=\frac{4\log z}{9\log q_2}
\quad
\text{ and }
\quad
R=z^{2/9}
.
\end{gather*}
Then clearly $R\in [1,z]$ and a short calculation shows that all four summands in the error term are $\ll z^{8/9}$,
which proves the first part.
For the second part we make use of Lemma~\ref{lem_fourier_coeff_estimate} and Parseval's identity to estimate~(\ref{eqn:independence_first_part}) by
\begin{multline}
  \frac{z^2}{R^2}
  \,
  \sup_{h\in\dZ}
  \abs{F_{q_1,\lambda_1}(h,\alpha)}^2
  \,
  \sup_{t\in \dR}
  \abs{
    \sum_{h_1<q_1^{\lambda_1}}{
      \min\left\{R^2,\Abs{h_1/q_1^{\lambda_1}+t}^{-2}\right\}
    }
  }
\\
  \times
  \sum_{h_2<q_2^{\lambda_2}}
  \abs{F_{q_2,\lambda_2}(h_2,\beta)}^2
\ll
  z^2
  q_1^{-2c\lambda_1}
  \frac{q_1^{\lambda_1}}{R}
,
\end{multline}
where $c=c_{q_1}\Abs{(q_1-1)\alpha}^2$.
Therefore for some constant $C$
the following holds for all $x,z\geq 0$ and all integers $R\in [1,z]$.
\begin{multline}\label{eqn:independence_all_parts}\nonumber
    \abs{\sum_{x<n\leq x+z}\e\p{\alpha s_{q_1}(n)+\beta s_{q_2}(n)}}
\\
  \leq
    C\p{
      q_1^{\lambda_1}q_2^{\lambda_2}
    +
      z^{1/2}R^{1/2}
    +
      zR^{1/2}\p{q_1^{-\lambda_1/2}+q_2^{-\lambda_2/2}}
    +
      zq_1^{\lambda_1(1/2-c)}R^{-1/2}
    }
.
\end{multline}
Again we may assume that $R,\lambda_1$ and $\lambda_2$ are real numbers.
We set
\begin{gather*}
\lambda_1=\frac{2\log z}{(4+c)\log q_1},
\quad
\lambda_2=\frac{2\log z}{(4+c)\log q_2}
\text{ and }
\quad
R=z^{\frac{2-2c}{4+c}}
.
\end{gather*}
With these choices we get after a short calculation
\begin{equation*}
    \sum_{x<n\leq x+z}\e\p{\alpha s_{q_1}(n)+\beta s_{q_2}(n)}
  \ll
    z^{1-c/(4+c)}
.
\end{equation*}
To get a convenient form of the exponent,
we note that $q_1\geq 2$, which implies $c_{q_1}\geq \pi^2/(36\log q_1)$.
By the same condition and monotonicity of $x/(4+x)$ we get
\[
  \frac c{4+c}
\geq
  \frac{\pi^2\Abs{(q_1-1)\alpha}^2}{36\log q_1\p{4+\frac{\pi^2\Abs{(q_1-1)\alpha}^2}{36\log q_1}} }
\geq
  \frac{\Abs{(q_1-1)\alpha}^2}{\frac{144\log q_1}{\pi^2}+\frac 14}
\geq
  \frac{\Abs{(q_1-1)\alpha}^2}{15\log q_1}.
\]
\end{proof}
By Corollary~\ref{cor_1} and~(\ref{eqn:independence_exponential_sum}) we see that for all real $\alpha$ and $\beta$
the function $\varphi(m)=\e\p{\alpha s_{q_1}(m)+\beta s_{q_2}(m)}$
admits a ``change of variables'' as long as $2-(8/9+1)c>0$, that is, $c<18/17$.
We assume now that
$(q_1-1)\alpha\not\in\dZ$
or
$(q_2-1)\beta\not\in\dZ$.
Then by partial summation and equation~(\ref{eqn:independence_sup_bound})
the second sum in~(\ref{eqn:cor_expsum_conclusion}) can be eliminated,
leading to the following statement:

Let $q_1,q_2\geq 2$ be relatively prime and $\alpha,\beta\in\dR$
such that $(q_1-1)\alpha\not\in\dZ$ or $(q_2-1)\beta\not\in\dZ$.
Then for all $c\in (1,18/17)$
there exist $\varepsilon>0$ and $C$ such that for $N\geq 1$ we have
\[
  \sum_{1\leq n\leq N}\e\p{\alpha s_{q_1}\p{\floor{n^c}}+\beta s_{q_2}\p{\floor{n^c} }}
\leq
  CN^{1-\varepsilon}
.
\]
From this exponential sum estimate we get the statement of Theorem~\ref{app_2}
by an orthogonality argument, which completes the proof.

Note that by the same orthogonality argument~(\ref{eqn:joint_introduction})
can be deduced from
from~(\ref{eqn:independence_sup_bound}),
which gives an alternative to Kim's proof~\cite{K99}.
\subsection{The Zeckendorf sum-of-digits function}\label{subsection_Zeckendorf}
In our third application we study the distribution in residue classes
of the values of the Zeckendorf sum-of-digits function on $\floor{n^c}$.

For $k\geq 0$ let $F_k$ be the $k$-th Fibonacci number, that is,
$F_0=0$, $F_1=1$ and $F_k=F_{k-1}+F_{k-2}$ for $k\geq 2$.
By Zeckendorf's Theorem~\cite{Z72} every positive integer $n$ admits a unique representation
\[
n=\sum_{i\geq 2}\varepsilon_iF_i
,
\]
where $\varepsilon_i\in \{0,1\}$ and $\varepsilon_i=1\Rightarrow \varepsilon_{i+1}=0$.
By this theorem we may write the $i$-th coefficient $\varepsilon_i$ as a function of $n$.
The Zeckendorf sum-of-digits of $n$ is then defined as
\[
  s_Z(n)
=
  \sum_{i\geq 2}\varepsilon_i(n)
.
\]
We set $s_Z(0)=0$.
We note that $s_Z(n)$ is the least $k$ such that $n$ is the sum of $k$ Fibonacci numbers.
\begin{thm}[The Zeckendorf sum-of-digits function on $\floor{n^c}$]\label{app_3}
Let $m\geq 1$ and $a$ be integers.
Then for all $c\in (1,4/3)$
there exists $\varepsilon>0$ such that
uniformly for $x\geq 1$ we have
\begin{equation}\label{eqn:zeckendorf_residue_classes}\nonumber
    \abs{\left\{n\leq x: s_Z\p{\floor{n^c}}\equiv a\bmod m\right\}}
  =
    \frac xm
  +
    O\p{x^{1-\varepsilon}}
.
\end{equation}
\end{thm}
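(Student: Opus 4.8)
The plan is to reduce the residue-class statement to an exponential sum bound by orthogonality of additive characters modulo $m$, exactly as in the previous two applications. Writing $e_m(t)=\e(t/m)$, we have
\[
  \abs{\{n\leq x:s_Z(\floor{n^c})\equiv a\bmod m\}}
  =
  \frac 1m\sum_{j<m}e_m(-ja)\sum_{1\leq n\leq x}e_m\p{j\,s_Z\p{\floor{n^c}}}
  .
\]
The term $j=0$ contributes $x/m$, so it suffices to show that for each fixed $j$ with $1\leq j<m$ there is $\varepsilon>0$ with $\sum_{n\leq x}\e\p{\alpha s_Z(\floor{n^c})}\ll x^{1-\varepsilon}$, where $\alpha=j/m$ is a fixed rational that is not an integer. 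By Corollary~\ref{cor_1} (with $\varphi(n)=\e\p{\alpha s_Z(n)}$) this in turn follows once we establish a bound of the shape
\[
  \int_0^1\sup_{x\geq 0}\abs{\sum_{x<n\leq x+z}\e\p{\alpha s_Z(n)+n\theta}}\ud\theta\ll z^{a}
\]
for some $a<1$, together with a pointwise bound on the $\theta=0$ integrand (to remove the second sum in~(\ref{eqn:cor_expsum_conclusion}) by partial summation); the condition $c<4/3$ should then be exactly the range in which the resulting exponent $\frac{2-(a+1)c}{3-a}$ is positive, which corresponds to taking $a$ close to $1/2$.

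The heart of the proof is therefore the exponential sum estimate for $\sum\e\p{\alpha s_Z(n)+n\theta}$ over an interval, and this is where I expect the real work to lie. The strategy mirrors the Thue-Morse case in Proposition~\ref{prp_app_1}: introduce a truncated Zeckendorf sum-of-digits function $s_{Z,\lambda}$ that records only the digits $\varepsilon_i$ with $i<\lambda+2$ (equivalently, reduces $n$ modulo $F_{\lambda+2}$), prove an analogue of Lemma~\ref{lem_f_lambda} stating that replacing $s_Z(n+r)-s_Z(n)$ by $s_{Z,\lambda}(n+r)-s_{Z,\lambda}(n)$ fails for only $O(\abs{I}\abs{r}/F_\lambda+\abs r)$ values of $n$, and then apply van der Corput's inequality (Lemma~\ref{lem_vdc}) to the $s_{Z,\lambda}$-sum. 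The key structural input, already alluded to in the paper's remark following~(\ref{eqn:sinprod}), is a \emph{Zeckendorf dyadic decomposition}: any interval of integers decomposes into $O(\lambda)$ subintervals each of the form where $s_{Z,\lambda}$ (or a suitable shift of it) behaves like the sum-of-digits restricted to a full "block" $[\ell F_k,(\ell+1)F_k)$, on which the character sum over $n$ factors through the Fibonacci-like recursion. One then needs the Fibonacci analogue of the product identity~(\ref{eqn:sinprod}) and of Lemma~\ref{lem_FM}, i.e.\ a bound $\int_0^1\prod_{k<\lambda}\abs{g_k(\theta)}\ud\theta\ll \rho_Z^\lambda$ for the relevant factors $g_k$ coming from the transfer operator of the Zeckendorf recursion, with some $\rho_Z<1$; such contraction estimates for the Zeckendorf (Ostrowski) numeration system are known, and the product gain $\rho_Z^\lambda$ against the $O(\lambda)$ blocks of "length" up to $F_\lambda$ is what produces the savings $z^a$ with $a<1$.

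Concretely I would proceed in the following order. First, fix notation for $s_{Z,\lambda}$ and establish the truncation lemma (an easy counting argument, parallel to Lemma~\ref{lem_f_lambda}, using that $n\mapsto n\bmod F_{\lambda+2}$ is close to $F_{\lambda+2}$-periodic). Second, prove the Zeckendorf dyadic decomposition of an arbitrary finite interval into $O(\lambda)$ Fibonacci-aligned blocks; this is the analogue of the statement the paper deferred to this section, and one first treats the case $F_K\leq b<F_{K+1}$, $a<F_K$ and reduces the general case by adding a multiple of $F_{K+1}$. Third, derive on each such block the exact evaluation of $\sum_{n}\e\p{\alpha s_{Z,\lambda}(n)+n\theta}$ as a product over digit positions $k$, using the non-consecutiveness constraint $\varepsilon_i\varepsilon_{i+1}=0$ to write the sum as a product of $2\times2$ transfer matrices depending on $\theta$ and $\alpha$. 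Fourth, integrate in $\theta$ over $[0,1]$ and invoke (or prove, following~\cite{FM96}-type arguments) the contraction bound $\rho_Z^\lambda$ for the resulting integral of the product of matrix norms, giving the $z^a$ estimate. Fifth, feed the pointwise $\theta=0$ bound — which comes from Lemma~\ref{lem_fourier_coeff_estimate}-type estimates for the Zeckendorf Fourier coefficients, or directly from the transfer-matrix product at $\theta=0$, showing $\abs{\sum_{n\leq u}\e(\alpha s_Z(n))}\ll u^{1-\eta(\alpha)}$ for non-integral $\alpha$ — into partial summation to kill the main term $\frac1c\sum_{m\leq N^c}\varphi(m)m^{1/c-1}$ in Corollary~\ref{cor_1}. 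Finally, combine with orthogonality as above to conclude. The main obstacle, and the step requiring genuine care rather than bookkeeping, is the combination of the Zeckendorf dyadic decomposition with the van der Corput step: unlike the base-$q$ case, the Fibonacci blocks do not all have comparable length and the truncated function $s_{Z,\lambda}$ is only quasi-periodic, so one must check that the error terms accumulated over the $O(\lambda)$ blocks and over the $R$ shifts in van der Corput remain dominated by the main saving $F_\lambda^{?}\rho_Z^\lambda$ after optimizing $\lambda$ and $R$ in terms of $z$; getting the exponent to land below $1$ in the whole range $c<4/3$ is exactly the delicate balancing.
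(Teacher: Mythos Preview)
Your overall architecture---orthogonality to reduce to exponential sums, then Corollary~\ref{cor_1} with $a=1/2$ (so that $\frac{2-(a+1)c}{3-a}>0$ iff $c<4/3$), then partial summation using a pointwise bound at $\theta=0$---is exactly what the paper does. The Zeckendorf dyadic decomposition you mention is indeed proved here (Lemma~\ref{lem_phi-adic}).

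However, the machinery you propose for the two key exponential sum bounds is vastly heavier than what the paper uses, and in one place it is aimed at the wrong target. The paper obtains the integral bound $\int_0^1\sup_x\abs{\sum_{x<n\leq x+z}\e(\alpha s_Z(n)+n\theta)}\ud\theta\ll z^{1/2}$ by the \emph{trivial} Cauchy--Schwarz/Parseval inequality on Fibonacci blocks: for $G_k(\alpha,\theta)=\sum_{u<F_k}\e(\alpha s_Z(u)+\theta u)$ one has $\int_0^1|G_k|\,\ud\theta\leq\bigl(\int_0^1|G_k|^2\bigr)^{1/2}=F_k^{1/2}$, and then the dyadic decomposition into $O(\log z)$ blocks gives $z^{1/2}$. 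No truncation $s_{Z,\lambda}$, no van der Corput, no transfer matrices, no Fouvry--Mauduit-type contraction estimate is needed; the exponent $1/2$ requires nothing about $s_Z$ beyond the block structure. Your proposed route might in principle yield a smaller $a$ and hence a larger range for $c$, but that is not the goal here, and the ``delicate balancing'' you flag as the main obstacle simply does not arise.

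For the pointwise bound at $\theta=0$ the paper is again much simpler than your proposal: $G_k(\alpha,0)$ satisfies the scalar linear recurrence $G_{k+1}=G_k+\e(\alpha)G_{k-1}$, whose characteristic roots have modulus strictly less than $\varphi$ whenever $\alpha\notin\dZ$, giving $G_k(\alpha,0)\ll\varphi^{k(1-\eta)}$ directly. There is no need for Fourier coefficients in the style of Lemma~\ref{lem_fourier_coeff_estimate}.
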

The proof of this statement is based on the following proposition.
\begin{prp}\label{prp_zeckendorf_fourier_expression}
There exist $C$ such that for all $\alpha\in\dR$ and $z\geq 1$ we have
\begin{equation}\label{eqn:zeckendorf_integral}
\int_0^1\sup_{x\geq 0}\abs{\sum_{x<n\leq x+z}\e\p{\alpha s_Z(n)+n\theta}}\ud\theta\leq Cz^{1/2}.
\end{equation}
Moreover for $\alpha\not\in\dZ$ there exist $\eta>0$ and $C_1$ such that for all $z\geq 1$
\begin{equation}\label{eqn:zeckendorf_abs}
\sup_{x\geq 0}\abs{\sum_{x<n\leq x+z}\e\p{\alpha s_Z(n)}}\leq C_1z^{1-\eta}
.
\end{equation}
\end{prp}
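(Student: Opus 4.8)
The plan is to mimic the proof of Proposition~\ref{prp_app_1}, with the product formula~(\ref{eqn:sinprod}) replaced by a linear recursion adapted to the Zeckendorf numeration system. For $\lambda\geq 0$ I would set $S_\lambda(\theta)=\sum_{0\leq m<F_\lambda}\e\p{\alpha s_Z(m)+m\theta}$ and split the range $0\leq m<F_\lambda$ according to the digit $\varepsilon_{\lambda-1}(m)$: if $\varepsilon_{\lambda-1}(m)=0$ then $m<F_{\lambda-1}$, and if $\varepsilon_{\lambda-1}(m)=1$ then $m=F_{\lambda-1}+m'$ with $0\leq m'<F_{\lambda-2}$, in which case $\varepsilon_{\lambda-2}(m)=0$ necessarily and hence $s_Z(m)=1+s_Z(m')$. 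This yields the recursion
\[
  S_\lambda(\theta)=S_{\lambda-1}(\theta)+\e\p{\alpha}\e\p{F_{\lambda-1}\theta}\,S_{\lambda-2}(\theta)
  \qquad(\lambda\geq 2),
\]
with $S_0=0$ and $S_1=1$. By the same carry-free reasoning, if one prescribes the Zeckendorf digits at all positions $\geq k$, the set of $n\in\dN$ realising that fixed ``tail'' is an interval of integers --- a \emph{cylinder of level $k$}, of length $F_k$ or $F_{k-1}$ --- and its exponential sum $\sum_n\e\p{\alpha s_Z(n)+n\theta}$ equals $\e\p{\beta}S_k(\theta)$ or $\e\p{\beta}S_{k-1}(\theta)$ for a real $\beta$ depending only on the tail.

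Next I would establish the Zeckendorf analogue of the dyadic decomposition used for Proposition~\ref{prp_app_1} --- this is the ``very similar problem'' announced there: every finite interval of nonnegative integers is a disjoint union of cylinders, with only $O(1)$ cylinders of each level. Every cylinder occurring in the decomposition of $(x,x+z]$ has length at most $z$, so only levels with $F_k\ll z$ appear, and there are $O(\log z)$ cylinders in all; together with the triangle inequality and the previous paragraph this gives, uniformly in $\alpha$ and $\theta$,
\[
  \sup_{x\geq 0}\abs{\sum_{x<n\leq x+z}\e\p{\alpha s_Z(n)+n\theta}}
  \ll\sum_{F_\lambda\ll z}\abs{S_\lambda(\theta)}.
\]
Since Parseval's identity gives $\int_0^1\abs{S_\lambda(\theta)}^2\ud\theta=F_\lambda$, Cauchy--Schwarz yields $\int_0^1\abs{S_\lambda(\theta)}\ud\theta\leq F_\lambda^{1/2}$; integrating the last display and summing the geometric series $\sum_{F_\lambda\ll z}F_\lambda^{1/2}\ll z^{1/2}$ (using $F_\lambda\asymp\Phi^\lambda$, $\Phi=\p{1+\sqrt 5}/2$) proves~(\ref{eqn:zeckendorf_integral}), with an absolute constant.

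For~(\ref{eqn:zeckendorf_abs}) I would set $\theta=0$, so the recursion becomes constant-coefficient, $S_\lambda(0)=S_{\lambda-1}(0)+\e\p{\alpha}S_{\lambda-2}(0)$, with characteristic polynomial $t^2-t-\e\p{\alpha}$ having two distinct roots $t_\pm$ (the discriminant $1+4\e\p{\alpha}$ never vanishes), which satisfy $t_++t_-=1$ and $\abs{t_+t_-}=1$. Writing $\varrho=\max\{\abs{t_+},\abs{t_-}\}\geq 1$, the inequality $1=\abs{t_++t_-}\geq\varrho-1/\varrho$ forces $\varrho\leq\Phi$, and the case $\varrho=\Phi$ forces equality in the triangle inequality, which is easily seen to happen only when $\e\p{\alpha}$ is a positive real number, that is, only when $\alpha\in\dZ$. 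Hence for $\alpha\notin\dZ$ we have $\varrho<\Phi$, and from $S_\lambda(0)=At_+^\lambda+Bt_-^\lambda$ (with $A,B$ depending on $\alpha$, indeed $S_\lambda(0)=(t_+^\lambda-t_-^\lambda)/(t_+-t_-)$) we get $\abs{S_\lambda(0)}\ll_\alpha\varrho^\lambda$. Plugging $\theta=0$ into the reduction bound above and fixing any $\varrho'\in\p{\max\{1,\varrho\},\Phi}$,
\[
  \sup_{x\geq 0}\abs{\sum_{x<n\leq x+z}\e\p{\alpha s_Z(n)}}
  \ll_\alpha(\log z)\sum_{F_\lambda\ll z}\varrho^\lambda
  \ll_\alpha z^{\log\varrho'/\log\Phi},
\]
which is~(\ref{eqn:zeckendorf_abs}) with $\eta=1-\log\varrho'/\log\Phi>0$.

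The main obstacle is the cylinder decomposition: obtaining, for an arbitrary integer interval, a decomposition into cylinders with only $O(1)$ cylinders per level requires some care at the places where a cylinder abuts its neighbours, because of the ``no two consecutive ones'' constraint of the Zeckendorf representation --- this is precisely the point the text promises to treat in detail here. Everything else is routine: the root computation in the last step is elementary, and it is there that the hypothesis $\alpha\notin\dZ$ is used, while~(\ref{eqn:zeckendorf_integral}) needs only the trivial $L^2$-bound for $S_\lambda$ (a Fouvry--Mauduit-type refinement would give an exponent below $1/2$, which is not needed here).
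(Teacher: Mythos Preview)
Your proposal is correct and follows essentially the same route as the paper: define the block sums $S_\lambda=G_\lambda$, bound their $L^1$-norm by Cauchy--Schwarz/Parseval for~(\ref{eqn:zeckendorf_integral}), analyse the constant-coefficient recursion at $\theta=0$ for~(\ref{eqn:zeckendorf_abs}), and reduce arbitrary intervals to block sums via a Zeckendorf-adic decomposition (which the paper states and proves as Lemma~\ref{lem_phi-adic}). The only cosmetic difference is that the paper bounds the larger root explicitly by $\tfrac12+\tfrac12(17+8\cos 2\pi\alpha)^{1/4}$, whereas you deduce $\varrho<\Phi$ from $t_++t_-=1$, $|t_+t_-|=1$ and the reverse triangle inequality; both arguments are valid.
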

\begin{proof}
For $k\geq 0$ we define
\[
G_k(\alpha,\theta)=\sum_{0\leq u<F_k}\e\p{\alpha s_Z(u)+\theta u}
.
\]
By the Cauchy-Schwarz inequality and the formula $F_k\asymp \varphi^k$,
where $\varphi=(\sqrt{5}+1)/2$,
we clearly have
\begin{equation}\label{eqn:zeckendorf_integral_block}
\int_0^1\abs{\sum_{n<F_k}G_k(\alpha,\theta)}\ud\theta\leq F_k^{1/2}\ll \varphi^{k/2}.
\end{equation}
Moreover, by the relation
$s_Z(u+F_k)=1+s_Z(u)$
that holds for $k\geq 2$ and $0\leq u<F_{k-1}$
the terms $G_k(\alpha,0)$ satisfy the linear recurrence relation
\[
G_{k+1}(\alpha,0)
=
G_k(\alpha,0)
+
\e(\alpha)G_{k-1}(\alpha,0)
.
\]
Its characteristic polynomial has the roots
$\tfrac 12\pm \tfrac 12\sqrt{1+4\e(\alpha)}$, whose absolute values are
bounded by
$\tfrac 12+\tfrac 12(17+8\cos(2\pi\alpha))^{1/4}$.
This expression is equal to $\varphi$ if $\alpha\in\dZ$ and
strictly less than $\varphi$ otherwise.
Consequently, if $\alpha\not\in\dZ$, there is some $\eta>0$ such that
\begin{equation}\label{eqn:zeckendorf_abs_block}
G_k(\alpha,0)\ll \varphi^{k(1-\eta)}
.
\end{equation}
The expression for $G_k(\alpha,\theta)$ involves a sum over the interval
$[0,F_k)$.
In order to deal with arbitrary finite intervals $I$ in $\dN$,
we decompose the interval $I$ according to the Zeckendorf representation of its endpoints.
This procedure is analogous to the decomposition of an interval into dyadic intervals,
which we used in the proof of Theorem~\ref{app_1}.
\begin{lem}\label{lem_phi-adic}
Let $0\leq A<B$ be integers.
There exist integers $L\geq 2$ and $a_j$, $b_j$ for $2\leq j\leq L$ such that
$A=a_2\leq \cdots\leq a_L=b_L\leq\cdots \leq b_2=B$
having the properties that
$\varepsilon_i(a_j)=\varepsilon_i(b_j)=0$ for $2\leq i<j\leq L$ and that
$a_{j+1}-a_j\in\{0,F_{j-1}\}$ and
$b_j-b_{j+1}\in\{0, F_j\}$ for $2\leq j<L$.
\end{lem}
\begin{proof}
We first show that it is sufficient to assume that
$0\leq A<F_K\leq B<F_{K+1}$ for some $K\geq 2$.
Let $K=\max\{i:\varepsilon_i(A)\neq\varepsilon_i(B)\}$ and
$C=\sum_{i>K}\varepsilon_i(A)F_i=\sum_{i>K}\varepsilon_i(B)F_i$.
Then $0\leq A-C<F_K\leq B-C<F_{K+1}$ and
by our assumption we get a decomposition
$A-C=a_2\leq\cdots\leq a_L=b_L\leq\cdots\leq b_2=B-C$ as in the Lemma.
We have $\varepsilon_i(a_j)=\varepsilon_i(b_j)=0$ for $2\leq j\leq L$ and $i>K$
and since $\varepsilon_K(B)=1$, we have $\varepsilon_i(C)=0$ for $i\leq K+1$.
Therefore $A=a_2+C\leq\cdots\leq a_L+C=b_L+C\leq\cdots\leq b_2+C=B$
is a valid decomposition of the interval $[A,B]$.

It remains to prove the simplified statement.
In the case that $A=0$ we set
$a_2=\ldots=a_{K+1}=0$ and
$b_j=\sum_{i\geq j}\varepsilon_i(B)F_i$ for $2\leq j\leq K+1$.
Otherwise we set
$b_j=\sum_{i\geq j}\varepsilon_i(B)F_i$ for $2\leq j\leq K$
and to choose $a_j$, we use the following assertion which we prove by (downward) induction on $k$.
\begin{itemize}
\item[]
Let $K\geq 2$.
Assume that $0<A\leq F_K$ and $k=\min\{i:\varepsilon_i(A)=1\}$.
There exist integers $A=a_k\leq\cdots\leq a_K=F_K$ such that
for $k\leq j<K$ and $2\leq i<j$ we have
$\varepsilon_i(a_j)=0$ and
$a_{j+1}-a_j\in\{0,F_{j-1}\}$.
\end{itemize}
If $k=K$, then $A=F_K$ and we choose $a_K=A$.
Otherwise $2\leq k<K$ and we set $A'=A+F_{k-1}$ and $k'=\min\{i:\varepsilon_i(A')=1\}$.
Then $k'>k$.
We choose $a_{k'},\ldots,a_K$ according to the assumption,
$a_k=A$ and $a_{k+1}=\cdots=a_{k'-1}=A'$.
This choice gives an admissible decomposition of the interval $[A,F_K]$ and the
statement is proved.
Setting $a_2=\cdots=a_{k-1}=A$ completes the proof of Lemma~\ref{lem_phi-adic}.
\end{proof}
By this lemma we can decompose an arbitrary finite interval in $\dN$
into intervals of the form
$[A,A+F_j)$,
where $\varepsilon_i(A)=0$ for $i\leq j$,
in such a way that for each $j\geq 1$ there are
at most $2$ intervals of this form.
Noting also that $s_Z(n)=s_Z(A)+s_Z(n-A)$ for all $n$ in such an interval
and using the formula $F_k\asymp \varphi^k$,
one can easily derive~(\ref{eqn:zeckendorf_integral}) and~(\ref{eqn:zeckendorf_abs})
from~(\ref{eqn:zeckendorf_integral_block}) and~(\ref{eqn:zeckendorf_abs_block}).
\end{proof}
We plug~(\ref{eqn:zeckendorf_integral}) into Corollary~\ref{cor_1}
and eliminate the second sum in~(\ref{eqn:cor_expsum_conclusion}) by partial summation and~(\ref{eqn:zeckendorf_abs}),
which results in the statement that
for $\alpha\in\dR\setminus\dZ$
and for $c\in (1,4/3)$
there exist $\eta>0$ and $C$
such that
\begin{equation*}
\sum_{1\leq n\leq N}\e\p{\alpha s_Z\p{\floor{n^c}} }\leq CN^{1-\eta}
\end{equation*}
for $N\geq 1$.
By transferring this to a statement about residue classes,
we obtain the statement of Theorem~\ref{app_3}.
\section{Proofs of the main results}
We start with a couple of lemmas that we need in the proofs of
Theorem~\ref{thm_1} and Proposition~\ref{prp_1}.
The first one will allow proving
that the left hand sides of~(\ref{eqn:expsum_conclusion}) and~(\ref{eqn:essential_conclusion}) are always $O(A)$.
\begin{lem}\label{lem_lhs_bounded}
Let $f:\dR^+\ra\dR^+$ be differentiable and assume that $f'$ is increasing and positive.
Then
\[\sum_{f(A)<m\leq f(2A)}\p{f^{-1}}'(m)\ll A\]
for $A>0$.
\end{lem}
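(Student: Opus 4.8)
The plan is to reduce the sum $\sum_{f(A)<m\leq f(2A)}(f^{-1})'(m)$ to an integral and then bound the integral directly. First I would observe that, since $f'$ is increasing and positive, the inverse function $f^{-1}$ is well-defined on the range of $f$, it is differentiable there with $(f^{-1})'(y)=1/f'(f^{-1}(y))>0$, and $(f^{-1})'$ is \emph{decreasing} (because $f'$ is increasing and $f^{-1}$ is increasing). A decreasing positive summand can be compared to its integral in the standard way: for a decreasing function $g$ on $[f(A),f(2A)]$ one has $\sum_{f(A)<m\leq f(2A)}g(m)\leq \int_{f(A)}^{f(2A)}g(t)\ud t+g(f(A))$, and we can absorb the boundary term since $g(f(A))=(f^{-1})'(f(A))=1/f'(A)\leq 1$ once $f'(A)\geq$ some constant — or, more robustly, simply note $g(f(A))\leq \int_{f(A)-c}^{f(A)}g$ if we have a little room, or handle small $A$ separately. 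In any case the boundary contribution is $O(1)$, hence $O(A)$ for $A$ bounded below, and $F$-type arguments elsewhere in the paper already tolerate $O(1)$ slack.

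The key step is then the change of variables in the integral: substituting $t=f(u)$, so $\ud t=f'(u)\ud u$ and $(f^{-1})'(t)=1/f'(u)$, gives
\[
  \int_{f(A)}^{f(2A)}\p{f^{-1}}'(t)\ud t
=
  \int_{A}^{2A}\frac{1}{f'(u)}f'(u)\ud u
=
  \int_{A}^{2A}\ud u
=
  A
.
\]
Thus the integral is exactly $A$, and combined with the $O(1)$ boundary term we get $\sum_{f(A)<m\leq f(2A)}(f^{-1})'(m)\leq A+O(1)\ll A$ for $A>0$ (treating the bounded range $0<A\leq A_1$ trivially, where the left-hand side is itself bounded).

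I do not expect any serious obstacle here: the only mild point of care is the boundary term from comparing a sum of a decreasing function to its integral, and making sure the estimate is uniform down to small $A>0$ rather than only for large $A$. Since the statement only claims $\ll A$ (not $\sim A$ or $\leq A$), the crude bound "integral plus first term" is more than enough, and for $A$ in any fixed bounded interval the sum has at most $O(f(2A))$ terms each of size $O(1)$, which is trivially $O(A)$ after possibly enlarging the implied constant — though in fact one should just note the sum is empty or finite and bounded there. The heart of the matter is simply recognizing that the integral evaluates exactly to $A$ via the substitution inverse to $t=f(u)$.
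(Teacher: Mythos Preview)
Your proposal is correct and follows essentially the same route as the paper: compare the sum of the decreasing function $(f^{-1})'$ to its integral, which evaluates to exactly $A$, and absorb the boundary term $1/f'(A)$ as $O(1)$ while handling small $A$ trivially (the paper simply notes the sum is empty for $A$ below some threshold $a$, whence $1/f'(A)\leq 1/f'(a)$ for $A\geq a$).
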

\begin{proof}
If $g:\dR^+\ra\dR^+$ is decreasing and $0<s\leq t$, we have
\begin{multline*}
    \sum_{s<m\leq t}g(m)
  =
    \sum_{\floor{s}+1\leq m\leq\floor{t}}g(m)
  =
    \int_{\floor{s}}^{\floor{t}}g\p{\floor{x}+1}\ud x
\\
  \leq
      g\p{\floor{s}+1}
    +
      \int_{\floor{s}+1}^{\floor{t}}g(x)\ud x
  \leq
      g(s)
    +
      \int_s^tg(x)\ud x
.
\end{multline*}
We apply this to the function $g(x)=\p{f^{-1}}'(x)$,
noting also that there is some $a>0$
such that the sum in the lemma is equal to $0$ for $A<a$.
For $A\geq a$ we have
\[
    \sum_{f(A)<m\leq f(2A)}\p{f^{-1}}'(m)
  \leq
      \frac 1{f'(A)}
    +
      f^{-1}(x)\Big|_{f(A)}^{f(2A)}
  \leq
      \frac 1{f'(a)}
    +
      A
  \ll
      A
.
\]
\end{proof}
In the next lemma we study properties of functions $f$ as in Theorem~\ref{thm_1} and Proposition~\ref{prp_1}.
\begin{lem}\label{lem_properties_f}
Assume that $f:\dR^+\ra\dR$ is two times continuously differentiable,
$f,f',f''>0$
and that there exist $c_1\geq 1/2$ and $c_2>0$
such that for $0<x\leq y\leq 2x$ we have
$c_1f''(x)\leq f''(y)\leq c_2f''(x)$.
Then the following estimates hold.
\begin{alignat}{3}
  xf''(x) &\ll yf''(y)&&
  &&
    \text{ for }0<x\leq y
    \label{eqn:almost_monotone}
\\
    xf''(x) &\ll f'(x)&&\ll xf''(x)\log x
    \quad
  &&
    \text{ for }x\geq 2
    \label{eqn:df_d2f}
\\
    f'(x) &\leq f'(y)&&\ll f'(x)
  &&
    \text{ for }0<x\leq y\leq 2x,
    \label{eqn:df_quotient}
\\
    \log x &\ll f'(x)&&\ll x^\delta
  &&
    \text{ for some }\delta\geq 0\text{ and all }x\geq 2
    \label{eqn:df_x}
.
\end{alignat}
Moreover for $0<x\leq a\leq b\leq 2x$ we have
\begin{equation}\label{eqn:mvt_mon_1}
f(b)-f(a)\asymp f'(x)(b-a)
\end{equation}
and
\begin{equation}\label{eqn:mvt_mon_2}
f'(b)-f'(a)\asymp f''(x)(b-a).
\end{equation}
\end{lem}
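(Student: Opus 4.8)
\textbf{Proof plan for Lemma~\ref{lem_properties_f}.}
The plan is to extract everything from the single quantitative hypothesis $c_1f''(x)\le f''(y)\le c_2f''(x)$ for $0<x\le y\le 2x$, together with positivity of $f,f',f''$, by a telescoping/dyadic argument. First I would record the elementary consequence that the two–sided bound on $f''$ over intervals of the form $[x,2x]$ iterates: writing $y=2^k x\cdot t$ with $1\le t<2$ (so $y\in[2^kx,2^{k+1}x]$), comparison of $f''$ at the endpoints of each doubling step gives $c_1^{k}f''(x)\ll f''(y)\ll c_2^{k}f''(x)$, hence $f''(x)$ and $f''(y)$ differ by a factor that is at worst a power of $c_2/c_1$ and $c_1 c_2$; since $2^k\asymp y/x$, this gives $f''(y)=f''(x)\cdot O((y/x)^{C})$ for an absolute exponent depending on $c_1,c_2$. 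In particular, using $c_1\ge 1/2$, the step $f''(2x)\ge c_1 f''(x)\ge \tfrac12 f''(x)$ shows $xf''(x)$ roughly doubles over $[x,2x]$; chaining this gives $xf''(x)\ll yf''(y)$ for $x\le y$, which is~(\ref{eqn:almost_monotone}).

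Next I would prove~(\ref{eqn:df_d2f}). For the lower bound $f'(x)\gg xf''(x)$: since $f'(x)\ge f'(x)-f'(x/2)=\int_{x/2}^x f''(t)\,dt$ and on $[x/2,x]$ we have $f''(t)\ge c_1 f''(x)$, the integral is $\ge (x/2)c_1 f''(x)\gg xf''(x)$. For the upper bound $f'(x)\ll xf''(x)\log x$: split $[x_0,x]$ (with $x_0$ a fixed reference point, say $2$) dyadically into $O(\log x)$ intervals $[2^{-j-1}x,2^{-j}x]$; on each, $\int f''\le 2^{-j}x\cdot c_2 f''(2^{-j}x)$, and by~(\ref{eqn:almost_monotone}) applied with the arguments $2^{-j}x\le x$ we have $2^{-j}x\,f''(2^{-j}x)\ll x f''(x)$; summing the $O(\log x)$ terms and adding $f'(x_0)=O(1)$ (absorbed since $xf''(x)$ is bounded below away from $0$ once $x\ge 2$, using the lower bound just proved and $f''>0$) gives $f'(x)\ll xf''(x)\log x$. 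Then~(\ref{eqn:df_quotient}) is immediate: $f'$ is increasing since $f''>0$, giving $f'(x)\le f'(y)$; and $f'(y)-f'(x)=\int_x^y f''\le (y-x)c_2 f''(x)\le x c_2 f''(x)\ll f'(x)$ by the lower bound of~(\ref{eqn:df_d2f}), so $f'(y)\ll f'(x)$. For~(\ref{eqn:df_x}), the lower bound $\log x\ll f'(x)$ follows because $f'(2^{k}x_0)\ge f'(2^{k-1}x_0)+\int f''\ge f'(2^{k-1}x_0)+ c_1 2^{k-1}x_0 f''(2^{k-1}x_0)$, and since $tf''(t)$ is bounded below by a positive constant for $t\ge 2$ each doubling adds a fixed amount, so $f'$ grows at least like $\log x$; the upper bound $f'(x)\ll x^\delta$ for some $\delta\ge 0$ follows from the iterated bound $f''(y)\ll (y/x)^{C}f''(x)$, which on integrating gives $f'(x)\ll x^{1+C}$, so any $\delta\ge 1+C$ works. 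Finally~(\ref{eqn:mvt_mon_1}) and~(\ref{eqn:mvt_mon_2}): for $x\le a\le b\le 2x$, the mean value theorem gives $f(b)-f(a)=f'(\xi)(b-a)$ with $\xi\in[a,b]\subseteq[x,2x]$, and $f'(\xi)\asymp f'(x)$ by~(\ref{eqn:df_quotient}); likewise $f'(b)-f'(a)=f''(\eta)(b-a)$ with $\eta\in[x,2x]$ and $f''(\eta)\asymp f''(x)$ by hypothesis.

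The only mildly delicate point, and the one I would be most careful about, is the logarithmic upper bound in~(\ref{eqn:df_d2f}): one must control the sum $\sum_{j}2^{-j}x\,f''(2^{-j}x)$ over the $O(\log x)$ dyadic pieces, and this is exactly where~(\ref{eqn:almost_monotone}) — that $t\mapsto tf''(t)$ is increasing up to a constant — does the work, turning each summand into $O(xf''(x))$ rather than something that could telescope into an extra factor. Everything else is a routine application of the mean value theorem and the hypothesis; no genuinely hard estimate is involved, and the whole lemma is really just a careful unpacking of the ``$f''$ is doubling‑comparable'' assumption. I would present the estimates in the stated order, since each uses the previous ones, and reuse the reference point $x_0=2$ (where $f'(x_0)=O(1)$) throughout to handle the additive constants.
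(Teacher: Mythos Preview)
Your plan is correct and matches the paper's proof almost step for step: dyadic iteration of the hypothesis for~(\ref{eqn:almost_monotone}), integrating $f''$ over $[x/2,x]$ and over $[2,x]$ (using~(\ref{eqn:almost_monotone}) to pull out $xf''(x)$ and leave $\int 1/t$) for the two halves of~(\ref{eqn:df_d2f}), and the mean value theorem for the rest. Two cosmetic remarks: on $[x/2,x]$ the hypothesis, applied to the pair $(t,x)$ with $t\le x\le 2t$, gives $f''(t)\ge f''(x)/c_2$ rather than $c_1 f''(x)$; and for the upper bound $f'(x)\ll x^\delta$ the paper obtains it more directly by iterating $f'(2z)\ll f'(z)$ from~(\ref{eqn:df_quotient}), which avoids going back to the second derivative.
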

\begin{proof}
In order to prove~(\ref{eqn:almost_monotone}),
we show the equivalent statement that
\[
f''(x)\ll af''(ax)
\]
for $a\geq 1$ and $x>0$.
This is clear for $a=2^k$ by the inequalities
$c_1f''(x)\leq f''(2x)$ and $c_1\geq 1/2$.
If $2^k\leq a<2^{k+1}$, we have
$f''(ax)\geq c_1f''(2^kx)\geq c_12^{-k}f''(x)\gg 1/af''(x)$.
We turn to the first inequality in~(\ref{eqn:df_d2f}).
By the Mean Value Theorem there exists some $\xi\in [x/2,x]$ such that
$f'(x)
\geq
f'(x)-f'(x/2)
=
x/2f''(\xi)
\geq
x/(2c_2) f''(x)
$.
For the proof of the second inequality in~(\ref{eqn:df_d2f}),
let $x\geq 2$. For $t\leq x$ we have
$tf''(t) \ll xf''(x)$
by~(\ref{eqn:almost_monotone})
and therefore
\[
    f'(x)=f'(2)+\int_2^xf''(t)\ud t
  \ll
    f'(2)+xf''(x)\int_2^x\frac 1t\ud t
  \leq
    f'(2)+xf''(x)\log x
.
\]
For $x\geq 2$ we have
$xf''(x)\log x\gg f''(2)\gg f'(2)$
by~(\ref{eqn:almost_monotone})
and $f',f''>0$,
therefore
$f'(x)\ll xf''(x)\log x$.
The first inequality of~(\ref{eqn:df_quotient}) is obvious since $f'$ is increasing.
By applying the Mean Value Theorem
it follows that there exists
$\xi\in [x,2x]$ such that
$f'(2x)-f'(x)=xf''(\xi)\ll xf''(x)$.
Together with~(\ref{eqn:df_d2f}) we get
$f'(2x)\ll f'(x)$.
We prove~(\ref{eqn:df_x}).
The first estimate follows from~(\ref{eqn:almost_monotone}) if we set $x=1$ and
integrate in $y$.
By~(\ref{eqn:df_quotient})
there exists $c>0$ such that $f'(2z)\leq cf'(z)$ for all $z>0$, from which we get
$f'(x)\ll c^\frac{\log x}{\log 2}f'(1)$ for all $x\geq 1$.
Let $0<x\leq a\leq b\leq 2x$.
By the Mean Value Theorem there is some $\xi\in[a,b]$ such that $f(b)-f(a)=f'(\xi)(b-a)$.
From the monotonicity of $f'$ and~(\ref{eqn:df_quotient}) we get~(\ref{eqn:mvt_mon_1}).
Analogously,~(\ref{eqn:mvt_mon_2}) is proved
via the assumption $c_2f''(x)\leq f''(y)\leq c_2f''(x)$.
\end{proof}
In the following lemma we integrate over a well-known estimate
for the exponential sum $\sum\e(nx)$, where the sum extends over an interval containing $B$ integers.
\begin{lem}\label{lem_integral_1}
Let $a\leq b$ be real numbers and $B\geq 2$.
Then
\begin{equation*}
    \int_{a}^{b}{
      \min\left\{B,\Abs{x}^{-1}\right\}
    }
    \ud x
  \leq
    2\p{b-a+1}\p{1+\log B}
.
\end{equation*}
\end{lem}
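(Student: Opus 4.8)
The plan is to use the periodicity of the integrand and reduce to an integral over a single period. Set $g(x)=\min\{B,\Abs{x}^{-1}\}$; this is a nonnegative, $1$-periodic function, and the quantity to be estimated is $\int_a^b g(x)\ud x$.

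First I would evaluate the integral of $g$ over one period. On $[0,1/2]$ one has $\Abs{x}=x$, and since $B\geq 2$ the point $1/B$ at which $\Abs{x}^{-1}=B$ lies in $[0,1/2]$; using the reflection $x\mapsto 1-x$ to treat $[1/2,1]$, a short computation gives $\int_0^1 g(x)\ud x=2\int_0^{1/2}\min\{B,1/x\}\ud x=2\p{1+\log B-\log 2}$. This quantity is finite precisely because the spike of $\Abs{x}^{-1}$ at the integers is truncated at $B$, the truncated part contributing only $O(1)$.

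Next I would reduce a general interval to full periods. Writing $b-a=n+\rho$ with $n=\floor{b-a}$ and $\rho=\{b-a\}\in[0,1)$, split $\int_a^b g=\int_a^{a+n}g+\int_{a+n}^{b}g$. By $1$-periodicity the first term equals $n\int_0^1 g$; the second term equals $\int_a^{a+\rho}g$ after shifting by the integer $n$, and since $g\geq 0$ and $\rho\leq 1$ this is at most $\int_a^{a+1}g=\int_0^1 g$. Hence $\int_a^b g\leq(n+1)\int_0^1 g$. Finally, $n+1=\floor{b-a}+1\leq b-a+1$ and $2\p{1+\log B-\log 2}\leq 2\p{1+\log B}$, which together give the claimed bound $\int_a^b g\leq 2\p{b-a+1}\p{1+\log B}$.

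There is no serious obstacle here; the only point needing care is keeping the constant clean. A naive count of the (half-)unit intervals meeting $[a,b]$ yields a factor $b-a+2$ rather than $b-a+1$, which is not good enough when $b-a$ is small and $B$ is large, since the inequality is essentially tight in that regime (an interval of length of order $(1+\log B)/B$ centred at an integer nearly attains it). The decomposition into exactly $\floor{b-a}+1$ full periods, together with the slack coming from the $-\log 2$ term in the one-period integral, is what makes the factor $2$ and the summand $b-a+1$ come out correctly.
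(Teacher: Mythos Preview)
Your proof is correct and follows essentially the same route as the paper's. Both arguments exploit the $1$-periodicity and the symmetry about $1/2$ to reduce to the explicit computation $\int_0^{1/2}\min\{B,1/x\}\ud x=1+\log B-\log 2$; the paper simply writes the resulting inequality $\int_a^b g\leq 2(b-a+1)\int_0^{1/2}g$ without spelling out the period-counting step that you make explicit via $n=\floor{b-a}$.

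One minor remark on your closing paragraph: the inequality is not tight in the sense you suggest. For an interval of length $\ell\leq 1$ centred at an integer the left side is at most $2(1+\log B)$ while the right side is $2(\ell+1)(1+\log B)\geq 2(1+\log B)$, so a factor $b-a+2$ in place of $b-a+1$ would still yield a true (if weaker) inequality. Your real point---that the decomposition into exactly $\floor{b-a}+1$ full periods is what delivers the constant \emph{as stated in the lemma}---is valid, but the $-\log 2$ slack plays no role in securing either the factor $2$ or the summand $b-a+1$; it is simply discarded.
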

\begin{proof}
Since the integrand is $1$-periodic and symmetric with respect to $\frac 12$,
we have
\begin{multline*}
    \int_a^b{
      \min\left\{B,\Abs{x}^{-1}\right\}
    }
    \ud x
  \leq
    2(b-a+1)
    \int_0^{1/2}{
      \min\left\{B,\Abs{x}^{-1}\right\}
    }
    \ud x
\\
  \leq
      2(b-a+1)
      \p{
        \int_0^{1/B}{
          B
        }
        \ud x
      +
        \int_{1/B}^{1/2}{
          x^{-1}
        }
        \ud x
      }
\\
  \leq
    2(b-a+1)
    \p{1+\log (1/2)-\log (1/B)}
  \leq
    2\p{b-a+1}\p{1+\log B}
.
\end{multline*}
\end{proof}
\subsection{Proof of Proposition~\ref{prp_1}}\label{subsection_proof_prp_1}
We prepare for the proof of Proposition~\ref{prp_1} by giving some results on
the approximation of a twice differentiable function by an affine linear function.
\begin{lem}\label{lem_taylor}
Let $f:[a,b]\ra\dR$ be twice differentiable and $\abs{f''}\leq M$.
For all $\alpha\in f'([a,b])$ and $a\leq x\leq b$ we have
\[
    \abs{x\alpha+f(a)-a \alpha-f(x)}
  \leq
    M(b-a)^2
.
\]
\end{lem}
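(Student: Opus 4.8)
The plan is to estimate the quantity $D(x) := x\alpha + f(a) - a\alpha - f(x)$ by recognizing it as the difference between the affine interpolant of $f$ through the point $(a,f(a))$ with slope $\alpha$, evaluated at $x$, and the actual value $f(x)$. Since $\alpha \in f'([a,b])$, we may pick $\xi_0 \in [a,b]$ with $f'(\xi_0) = \alpha$. First I would write $D(x)$ via the fundamental theorem of calculus as
\[
  D(x) = \int_a^x \bigl(\alpha - f'(t)\bigr)\,\ud t = \int_a^x \bigl(f'(\xi_0) - f'(t)\bigr)\,\ud t,
\]
and then express the inner difference again by integrating $f''$, giving $f'(\xi_0) - f'(t) = \int_t^{\xi_0} f''(s)\,\ud s$, whose absolute value is at most $M\abs{\xi_0 - t} \le M(b-a)$ for every $t \in [a,b]$.

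Plugging this bound into the outer integral yields
\[
  \abs{D(x)} \le \int_a^x \abs{f'(\xi_0) - f'(t)}\,\ud t \le \int_a^x M(b-a)\,\ud t = M(b-a)(x-a) \le M(b-a)^2,
\]
using $x \le b$ in the last step. This is exactly the claimed inequality, so the argument closes immediately.

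An even shorter route, which I might prefer for the writeup, is to apply Taylor's theorem with Lagrange remainder around the point $\xi_0$: for some $\xi$ between $x$ and $\xi_0$ we have $f(x) = f(\xi_0) + f'(\xi_0)(x-\xi_0) + \tfrac12 f''(\xi)(x-\xi_0)^2$, and similarly $f(a) = f(\xi_0) + f'(\xi_0)(a-\xi_0) + \tfrac12 f''(\xi')(a-\xi_0)^2$; subtracting and using $\alpha = f'(\xi_0)$ gives $D(x) = \tfrac12 f''(\xi')(a-\xi_0)^2 - \tfrac12 f''(\xi)(x-\xi_0)^2$, which is bounded in absolute value by $\tfrac12 M\bigl((a-\xi_0)^2 + (x-\xi_0)^2\bigr) \le M(b-a)^2$ since both $\abs{a-\xi_0}$ and $\abs{x-\xi_0}$ are at most $b-a$.

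There is no real obstacle here; the only point requiring a moment's care is the observation that $\alpha \in f'([a,b])$ guarantees the existence of the point $\xi_0$ with $f'(\xi_0) = \alpha$, which is what lets us rewrite the constant slope $\alpha$ as a value of $f'$ and thereby reduce everything to a bound on $f''$. The factor $(b-a)^2$ (rather than the sharper $\tfrac12(b-a)^2$) is deliberately loose, which is harmless for the later applications and makes the first, integral-based proof completely clean.
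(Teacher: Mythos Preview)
Your proof is correct and follows essentially the same route as the paper: both arguments pick $\xi_0\in[a,b]$ with $f'(\xi_0)=\alpha$, then bound $D(x)$ by a product of two factors each at most $b-a$ times the sup of $\abs{f''}$; the paper just uses the Mean Value Theorem twice where you use the fundamental theorem of calculus followed by a bound on $f'(\xi_0)-f'(t)$. One tiny technical remark: writing $f'(\xi_0)-f'(t)=\int_t^{\xi_0}f''(s)\,\ud s$ assumes $f''$ is integrable, which is not guaranteed by mere twice differentiability, but since you only use the consequence $\abs{f'(\xi_0)-f'(t)}\le M\abs{\xi_0-t}$, which follows directly from the MVT applied to $f'$, the argument stands.
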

\begin{proof}
By the Mean Value Theorem there exists some $\xi_1\in [a,x]$ such that
$f(x)-f(a)=f'(\xi_1)(x-a)$, that is, such that
$\abs{x\alpha+f(a)-a\alpha-f(x)}=
(x-a)\abs{f'(\xi_1)-\alpha}$.
There exists some $y\in[a,b]$ such that $\alpha=f'(y)$.
By applying the Mean Value Theorem to the function $f'$, we get some
$\xi_2$ between $\xi_1$ and $y$ such that
$\abs{f'(\xi_1)-\alpha}=\abs{f'(\xi_1)-f'(y)}=\abs{(\xi_1-y)f''(\xi_2)}$.
From this the statement follows easily.
\end{proof}
The following result will permit us to replace the function
$\floor{f(n)}$ by a Beatty sequence on an interval
$(a,b]$.
\begin{lem}\label{lem_beattyfication}
Let $f:[a,b]\ra\dR$ be twice differentiable and $\abs{f''}\leq M$.
For all $\alpha\in f'([a,b])$ and $a\leq x\leq b$ such that
$\Abs{x\alpha+f(a)-a\alpha}>M(b-a)^2$ we have
\[\floor{f(x)}=\floor{x\alpha+f(a)-a\alpha}.\]
\end{lem}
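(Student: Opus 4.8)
The plan is to derive Lemma~\ref{lem_beattyfication} directly from Lemma~\ref{lem_taylor}. Write $g(x)=x\alpha+f(a)-a\alpha$ for the affine function interpolating $f$ at $a$ with slope $\alpha$. Lemma~\ref{lem_taylor} tells us that $\abs{g(x)-f(x)}\leq M(b-a)^2$ for every $x\in[a,b]$ and every $\alpha\in f'([a,b])$; call this quantity $\varepsilon\leq M(b-a)^2$. The hypothesis of the present lemma is precisely $\Abs{g(x)}>M(b-a)^2\geq\varepsilon$, i.e.\ $g(x)$ is at distance more than $\varepsilon$ from every integer. So the whole argument reduces to the elementary observation: if $\Abs{u}>\varepsilon$ and $\abs{u-v}\leq\varepsilon$, then $\floor{u}=\floor{v}$.

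First I would record that elementary fact with a one-line justification. Writing $u=g(x)$ and $v=f(x)$, the condition $\Abs{u}>\varepsilon$ means that $\{u\}\in(\varepsilon,1-\varepsilon)$, so the open interval $(u-\varepsilon,u+\varepsilon)$ contains no integer; since $v$ lies in this interval and $u$ does too, there is no integer strictly between $u$ and $v$ (nor equal to either endpoint's potential crossing), hence $\floor u=\floor v$. Then I would simply substitute back $u=x\alpha+f(a)-a\alpha$ and $v=f(x)$ to obtain $\floor{x\alpha+f(a)-a\alpha}=\floor{f(x)}$, which is the assertion.

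There is essentially no obstacle here: the only thing to be slightly careful about is the boundary case where $u$ or $v$ is itself an integer, but the strict inequality $\Abs{g(x)}>M(b-a)^2$ rules out $u\in\dZ$, and if $v\in\dZ$ then since $\abs{u-v}\leq\varepsilon<\Abs{u}$ is impossible (as $\Abs{u}\le\abs{u-v}$ when $v\in\dZ$) this case does not arise either. So the floors agree unconditionally under the stated hypothesis, and the proof is a two-step deduction: invoke Lemma~\ref{lem_taylor} for the approximation bound, then apply the elementary floor-preservation fact.
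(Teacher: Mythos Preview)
Your proof is correct and follows essentially the same route as the paper's: invoke Lemma~\ref{lem_taylor} to get $\abs{g(x)-f(x)}\leq M(b-a)^2$, then use the elementary fact that $\Abs{u}>d$ and $\abs{u-v}\leq d$ force $\floor{u}=\floor{v}$. The paper phrases this last step as $\floor{u-d}=\floor{u}=\floor{u+d}$ together with $u-d\leq v\leq u+d$, which is marginally cleaner than introducing an $x$-dependent $\varepsilon\leq M(b-a)^2$, but the content is identical.
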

\begin{proof}
We write $\beta=f(a)-a\alpha$ and $d=M(b-a)^2$.
The condition $\Abs{x\alpha+\beta}>d$ in the statement of the lemma
implies
$\floor{x\alpha+\beta-d}=\floor{x\alpha+\beta}=\floor{x\alpha+\beta+d}$.
Moreover by Lemma~\ref{lem_taylor} we get
$x\alpha+\beta-d\leq f(x)\leq x\alpha+\beta+d$.
Combining these observations yields the claim.
\end{proof}
We estimate the number of integers in an interval for which such an approximation fails.
\begin{lem}\label{lem_approx_lemma}
Let $a\leq b$ be integers and let $f:[a,b]\ra\dR$ be twice differentiable.
Assume that $\abs{f''}\leq M$.
For all $\alpha\in f'([a,b])$ and all $R\geq 1$ we have the estimate
\begin{multline}\label{eqn:approx_error}\nonumber
    \abs{\{n\in (a,b]:\floor{f(n)}\neq\floor{n\alpha+f(a)-a\alpha}\}}
\\
  \leq
    2M(b-a)^3+
    \frac {(b-a)}R+
    \sum_{1\leq r\leq R}
    \frac 1r\abs{\sum_{a<n\leq b}\e\p{nr\alpha}}
.
\end{multline}
\end{lem}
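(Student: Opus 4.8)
The plan is to count the integers $n\in(a,b]$ where the approximation $\floor{f(n)}=\floor{n\alpha+f(a)-a\alpha}$ fails, and by Lemma~\ref{lem_beattyfication} such an $n$ must satisfy $\Abs{n\alpha+\beta}\leq d$, where $\beta=f(a)-a\alpha$ and $d=M(b-a)^2$. So the quantity to estimate is bounded by $\abs{\{n\in(a,b]:\Abs{n\alpha+\beta}\leq d\}}$. First I would dispose of the trivial case $d\geq 1/2$: then the bound $2M(b-a)^3$ already dominates $(b-a)$, which trivially bounds the whole count, so we may assume $d<1/2$.

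Next I would detect the condition $\Abs{n\alpha+\beta}\leq d$ by a nonnegative Fej\'er-type or simpler trigonometric kernel. The cleanest choice is to use that for real $t$ with $\Abs{t}\leq d<1/2$ we have $1\leq\frac{1}{2d}\int_{-d}^{d}$ of a Dirichlet-kernel majorant, but a more elementary route is the standard bound: the indicator of $\Abs{t}\leq d$ is at most $\sum_{\abs{r}\leq R}c_r\e(rt)$ for suitable coefficients $c_r$ with $c_0\ll d+1/R$ and $\abs{c_r}\ll\min\{d+1/R,1/r\}$; here one takes $R\geq 1$ as the free parameter in the statement. Summing over $a<n\leq b$ gives
\[
\abs{\{n\in(a,b]:\Abs{n\alpha+\beta}\leq d\}}
\leq
c_0(b-a)+\sum_{1\leq\abs{r}\leq R}\abs{c_r}\abs{\sum_{a<n\leq b}\e\p{nr\alpha+r\beta}}.
\]
The factor $\e(r\beta)$ has absolute value $1$ and drops out, and pairing $r$ with $-r$ collapses the sum to $2\sum_{1\leq r\leq R}$. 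The term $c_0(b-a)$ contributes $\ll d(b-a)+(b-a)/R=M(b-a)^3+(b-a)/R$, matching the first two terms on the right-hand side of~(\ref{eqn:approx_error}) up to the absolute constant; and $\sum_{1\leq r\leq R}\abs{c_r}\abs{\sum\e(nr\alpha)}\ll\sum_{1\leq r\leq R}\frac1r\abs{\sum_{a<n\leq b}\e(nr\alpha)}$, which is exactly the last term.

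The main obstacle is simply producing the trigonometric majorant with the right coefficient bounds and constants so that everything assembles with a single absolute constant $2$ in front of $M(b-a)^3$; this is the classical Vaaler/Beurling–Selberg circle of ideas but here only the crude one-sided estimate is needed, so I would instead use the elementary kernel $\p{\frac{\sin(\pi R t)}{R\sin(\pi t)}}^2$ (the Fej\'er kernel) scaled appropriately, or even more simply majorize the indicator of a short arc by $1$ on that arc detected via $\min\{1,(R\Abs{t})^{-2}\}$-type bounds together with Lemma~\ref{lem_integral_1}; the bookkeeping to get the clean form~(\ref{eqn:approx_error}) is the only delicate point, and it is routine. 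Once the majorant is in place, the proof is a two-line summation as above.
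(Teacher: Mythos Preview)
Your proposal is correct and follows essentially the same route as the paper: reduce via Lemma~\ref{lem_beattyfication} to counting $n\in(a,b]$ with $\Abs{n\alpha+\beta}\leq d$ (after the trivial case $d\geq 1/2$), then bound this count by harmonic analysis. The only difference is packaging. Where you propose to build a trigonometric majorant for the indicator of a short arc by hand and then worry about the bookkeeping needed to land on the exact constant~$2$, the paper simply quotes the Erd\H{o}s--Tur\'an inequality in the sharp form with leading constant~$1$ (as in \cite[Lemma~1]{MRS02}), applied to the sequence $(\{n\alpha+\beta+d\})_{a<n\leq b}$ and the interval $[0,2d]$. This gives at once
\[
\abs{\{n\in(a,b]:\Abs{n\alpha+\beta}\leq d\}}
\;\leq\; 2d(b-a)+\frac{b-a}{R}+\sum_{1\leq r\leq R}\frac1r\abs{\sum_{a<n\leq b}\e(nr\alpha)},
\]
and since $2d(b-a)=2M(b-a)^3$ the lemma drops out with no further constants to track. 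Your majorant approach is of course how Erd\H{o}s--Tur\'an itself is proved, so the two arguments are the same underneath; citing it directly just spares you the delicate point you flagged.
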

\begin{proof}
Write $d=M(b-a)^2$ and $\beta=f(a)-a\alpha$.
If $d\geq \frac 12$ or $a=b$
the statement follows immediately since the left hand side is bounded by $b-a$.
Otherwise it suffices by Lemma~\ref{lem_beattyfication} to estimate the quantity
\[
\abs{\{n\in (a,b]:\Abs{n\alpha+\beta}\leq d\}}
.
\]
To do this, we apply the inequality of
Erd\H{o}s and Tur\'{a}n
to the sequence
$\p{\{n\alpha+\beta+d\}}_{a<n\leq b}$ in
$[0,1)$.
According to~\cite[Lemma 1]{MRS02}, the discrepancy of any real valued finite sequence $(x_1,\ldots,x_N)$ in
$[0,1)$,
where $N\geq 1$, satisfies
\begin{multline*}
    D_N(x_1,\ldots,x_N)
  =
    \sup_{0\leq r\leq s<1}\abs{\frac 1N\abs{\{1\leq n\leq N:r\leq x_n\leq s\}}-(s-r)}
\\
  \leq
    \frac 1{H+1}+\sum_{1\leq h\leq H}\frac 1h\abs{\frac 1N\sum_{1\leq n\leq N}\e(hx_n)}
\end{multline*}
for all $H\geq 1$.
This is the classical inequality of Erd\H{o}s and Tur\'{a}n with an improved constant, equal to $1$.

Considering the interval $[0,2d]$, we obtain from this the estimate
\begin{multline*}
    \abs{\frac 1{b-a}\abs{\{n\in (a,b]:\Abs{n\alpha+\beta}\leq d\}}-2d}
\\
  =
    \abs{\frac 1{b-a}\abs{\{n\in (a,b]:\{n\alpha+\beta+d\}\in [0,2d]\}}-2d}
\\
  \leq
      \frac 1R
    +
      \frac 1{b-a}
      \sum_{1\leq r\leq R}{
        \frac 1r\abs{\sum_{a<n\leq b}\e\p{nr\alpha+r\beta+rd}}
      }
,
\end{multline*}
from which the claim follows.
\end{proof}
The rough idea of the proof of Proposition~\ref{prp_1} is
to relate the two sums in~(\ref{eqn:essential_conclusion}) to each other in three steps,
introducing the expression~(\ref{eqn:essential_integral}).
We replace the function $\floor{f(n)}$
by a Beatty sequence $\floor{n\alpha+\beta}$ on small subintervals of
$(A,2A]$.
Analogously, we replace the expression $\p{f^{-1}}'(m)$ by the constant value $\frac 1\alpha$
on corresponding subintervals of
$(f(A),f(2A)]$.
To link the two expressions thus obtained we insert~(\ref{eqn:essential_integral}),
which expresses the error that arises when we replace the sum of $\varphi(n)$ over a Beatty sequence
by a sum of $\varphi(n)$ over all integers in an interval.
Afterwards we collect the error terms and we are done.
\begin{proof}[Proof of Proposition~\ref{prp_1}]
Let $A\geq 2$.
It is sufficient to concentrate on the case that $K$ is an integer and $2\leq K\leq A$, for the following reasons.
If $K<2$, then $\frac{(\log A)^2}K\gg 1$, and if $K>A$, then
$f''(A)K^2\geq Af''(A)A\gg 2f''(2)\gg 1$ by~(\ref{eqn:almost_monotone}).
Therefore the right hand side of~(\ref{eqn:essential_conclusion}) is bounded below for these cases, while
the left hand side of~(\ref{eqn:essential_conclusion}) is always bounded above by Lemma~\ref{lem_lhs_bounded}.
For general $K$ in $[2,A]$ we have $\abs{I(A,\floor{K})-I(A,K)}\ll \frac 1K$,
which can be deduced from the inequality
$\abs{ab-a'b'}\leq \abs{a-a'}\abs{b}+\abs{a'}\abs{b-b'}$
and the estimate $\alpha\geq f'(2)\gg 1$ that is valid for $\alpha\in[f'(A),f'(2A)]$.
This error is absorbed by the term $\frac{(\log A)^2}K$,
therefore the general case can easily be accounted for by adjusting the implied constant $C$.

To guarantee that all expressions involving $\varphi$ are well-defined, we set $\varphi(n)=0$ for $n\leq 0$.
For $K$ an integer and $2\leq K\leq A$
we partition the interval
$(A,2A]$
into smaller intervals of length at most $K$ as follows.
Define integral partition points $a_i=\ceil{A}+iK$ for $i\geq 0$
and set $L=\max\{i:a_i\leq 2A\}$, which is well defined since $K>0$.
The integer $L$ satisfies the estimate $L\leq\frac AK$.
We have the decomposition
\begin{equation}\label{eqn:interval_decomposition}
  (A,2A]
=
  \left(A,\ceil{A}\right]
  \cup
  \bigcup_{0\leq i<L}(a_i,a_{i+1}]
  \cup
  (a_L,2A]
.
\end{equation}
Let $\alpha\in\dR$.
Then by the triangle inequality and the relation $a_{i+1}-a_i=K$ we have for $i<L$
\begin{multline}\label{eqn:elementary_split}
    \abs{
        \sum_{a_i<n\leq a_{i+1}}\varphi\p{\floor{f(n)}}
      -
        \sum_{f(a_i)<m\leq f(a_{i+1})}\varphi(m)\p{f^{-1}}'(m)
    }
\\
  \leq
      T_1(\alpha,i)
    +
      T_2(\alpha,i)
    +
      T_3(\alpha,i)
    +
      T_4(\alpha,i)
,
\end{multline}
where
\begin{flalign*}
    T_1(\alpha,i)
  &=
    \abs{
        \sum_{a_i<n\leq a_{i+1}}\bigl(
            \varphi\p{\floor{f(n)}}
          -
            \varphi\p{\floor{n\alpha+f(a_i)-a_i\alpha}}
        \bigr)
    }
,
\\
    T_2(\alpha,i)
  &=
    \abs{
        \sum_{0<n\leq K}\varphi\p{\floor{n\alpha+f(a_i)}}
      -
        \frac 1\alpha\sum_{f(a_i)<m\leq f(a_i)+K\alpha}\varphi(m)
    }
,
\\
    T_3(\alpha,i)
  &=
    \abs{
        \frac 1\alpha\sum_{f(a_i)<m\leq a_{i+1}\alpha+f(a_i)-a_i\alpha}\varphi(m)
      -
        \frac 1\alpha\sum_{f(a_i)<m\leq f(a_{i+1})}\varphi(m)
    }
,
\\
    T_4(\alpha,i)
  &=
    \abs{
      \sum_{f(a_i)<m\leq f(a_{i+1})}{
        \varphi(m)\p{
            \frac 1\alpha
          -
            \p{f^{-1}}'(m)
        }
      }
    }
.
\end{flalign*}
We integrate
~(\ref{eqn:elementary_split}) in $\alpha$ from $f'(a_i)$ to $f'(a_{i+1})$,
divide by the length of the integration range,
and take the sum over $i$ from $0$ to $L-1$, obtaining
\begin{multline}\label{eqn:split}
    \abs{
        \sum_{\ceil{A}<n\leq a_L}\varphi\p{\floor{f(n)}}
      -
        \sum_{f(\ceil{A})<m\leq f(a_L)}\varphi(m)\p{f^{-1}}'(m)
    }
\\
  \leq
    \sum_{0\leq i<L}
      \frac 1{f'(a_{i+1})-f'(a_i)}
      \int_{f'(a_i)}^{f'(a_{i+1})}
        \left(
          \vphantom{\sum}
          T_1(\alpha,i)
          +T_2(\alpha,i)
        \right.
\\
        \left.
          \vphantom{\sum}
          +T_3(\alpha,i)
          +T_4(\alpha,i)
        \right)
      \ud \alpha
.
\end{multline}
The first summand will be estimated with the help of Lemma~\ref{lem_approx_lemma},
the second by $A\,I(A,K)$,
and the third and fourth terms will be estimated trivially.

We estimate the first summand in~(\ref{eqn:split}).
If $R$ is a positive integer, $0\leq i<L$ and
$\alpha\in f'([a_i,a_{i+1}])$, Lemma~\ref{lem_approx_lemma} gives
\begin{equation}\label{eqn:beatty_erdos_turan}
    T_1(\alpha,i)
  \leq
      2f''(A)K^3
    +
      \frac KR
    +
      \sum_{1\leq r\leq R}{
        \frac 1r
        \abs{\sum_{a_i<n\leq a_{i+1}}\e(nr\alpha)}
      }
.
\end{equation}
By~(\ref{eqn:mvt_mon_2}) we have
$f'(2A)-f'(A)\ll Af''(A)$ and
$f'(a_{i+1})-f'(a_i)\gg f''(A)K$ for $0\leq i<L$.
Note also that $Af''(A)\gg 2f''(2)>0$ for all $A\geq 2$
by~(\ref{eqn:almost_monotone})
and $f''>0$.
From Lemma~\ref{lem_integral_1} it follows that for $2\leq K\leq A$ and $r\geq 1$ we have
\begin{multline}\label{eqn:kuzmin_landau_integral}
    \sum_{0\leq i<L}{
      \frac 1{f'(a_{i+1})-f'(a_i)}
      \int_{f'(a_i)}^{f'(a_{i+1})}{
        \abs{\sum_{a_i<n\leq a_{i+1}}\e(nr\alpha)}
      }
      \ud \alpha
    }
\\
  \ll
    \frac 1{f''(A)K}
    \sum_{0\leq i<L}{
      \frac 1r
      \int_{rf'(a_i)}^{rf'(a_{i+1})}{
        \abs{\sum_{a_i<n\leq a_{i+1}}\e(xn)}
      }
      \ud x
    }
\\
  \leq
    \frac 1{f''(A)K}
    \frac 1r
    \int_{rf'(A)}^{rf'(2A)}{
      \min\left\{K,\Abs{x}^{-1}\right\}
    }
    \ud x
\\
  \ll
    \frac 1{f''(A)K}
    \frac 1r
    2(rAf''(A)+1)\p{1+\log K}
  \ll
    A\frac{\log K}{K}
.
\end{multline}
From~(\ref{eqn:beatty_erdos_turan}) and~(\ref{eqn:kuzmin_landau_integral})
and the estimates $L\leq\frac AK$ and $\sum_{i=1}^R\frac 1r\leq \log R+1$
it follows that for $2\leq K\leq A$ and $R\geq 2$ we have
\begin{multline}\label{eqn:first_step}
    \sum_{0\leq i<L}{
      \frac 1{f'(a_{i+1})-f'(a_i)}
      \int_{f'(a_i)}^{f'(a_{i+1})}{
        T_1(\alpha,i)
      }
      \ud \alpha
    }
\\
  \ll
    \frac AK
      \p{
        f''(A)K^3+\frac KR
      }
    +
      \frac {A\log K(\log R+1)}K
\\
  \ll
    A\p{
      f''(A)K^2+\frac 1R+\frac{\log K\log R}K
    }
,
\end{multline}
which concludes our treatment of the first term in~(\ref{eqn:split}).
We turn to the second summand.
Again we use~(\ref{eqn:mvt_mon_2}) and obtain the estimates
\[
\frac 1{f'(a_{i+1})-f'(a_i)}\ll \frac 1{f''(A)K}=\frac AK\frac 1{Af''(A)}\ll A\frac 1{f'(2A)-f'(A)}\frac 1K
\]
for $0\leq i<L$.
By inserting this and the definition of $T_2(\alpha,i)$, we easily obtain
\begin{equation}\label{eqn:second_step}
    \sum_{0\leq i<L}{
      \frac{1}{f'(a_{i+1})-f'(a_i)}
      \int_{f'(a_i)}^{f'(a_{i+1})}{
        T_2(\alpha,i)
      }
      \ud\alpha
    }
  \ll
    A\,I(A,K)
.
\end{equation}
To estimate the third term in~(\ref{eqn:split}), assume that $0\leq i<L$
and $\alpha\in\bigl[f'(a_i),f'(a_{i+1})\bigr]$.
We use Lemma~\ref{lem_taylor} (setting $x=a_{i+1}$) to get
\[
\abs{a_{i+1}\alpha +f(a_i)-a_i\alpha-f(a_{i+1})}
\leq c_2f''(A)K^2,
\]
therefore the two sums in the definition of $T_3(\alpha,i)$ differ by not more than
$c_2f''(A)K^2+1$ summands. Moreover, we have $L\leq \frac AK$.
Estimating $\frac 1\alpha\leq \frac 1{f'(A)}$ we get
\begin{multline}\label{eqn:third_step}
    \sum_{0\leq i<L}{
      \frac 1{f'(a_{i+1})-f'(a_i)}
      \int_{f'(a_i)}^{f'(a_{i+1})}{
        T_3(\alpha,i)
      }
      \ud \alpha
    }
\\
  \ll
    \frac AK\frac 1{f'(A)}(f''(A)K^2+1)
  =
    A\p{
        \frac{f''(A)K}{f'(A)}
      +
        \frac 1{f'(A)K}
      }
.
\end{multline}
Finally let $0\leq i<L$, $\alpha\in f'([a_i,a_{i+1}])$ and $f(a_i)<m\leq f(a_{i+1})$.
Choose $x,y\in[a_i,a_{i+1}]$ in such a way that $\alpha=f'(x)$ and $m=f(y)$.
Then by~(\ref{eqn:mvt_mon_2}) and the monotonicity of $f'$ we have
\begin{multline*}
    \abs{\frac 1\alpha-\p{f^{-1}}'(m)}
  =
    \abs{\frac 1{f'(x)}-\frac 1{f'(y)}}
  =
    \abs{\frac {f'(y)-f'(x)}{f'(x)f'(y)}}
\\
  \leq
    \frac{f'(a_{i+1})-f'(a_i)}{f'(a_i)^2}
  \ll
    \frac{f''(A)K}{f'(A)^2}
.
\end{multline*}
Moreover, the length of summation in the definition of $T_4(\alpha,i)$ can be estimated
using~(\ref{eqn:mvt_mon_1}), giving $f(a_{i+1})-f(a_i)+1\ll f'(A)K+1$.
It follows that
\begin{multline}\label{eqn:fourth_step}
    \sum_{0\leq i<L}{
      \frac 1{f'(a_{i+1})-f'(a_i)}
      \int_{f'(a_i)}^{f'(a_{i+1})}{
        T_4(\alpha,i)
      }
      \ud x
    }
\\
  \ll
    \frac AK
    \p{
        f'(A)K+1
    }\p{
      \frac{f''(A)K}{f'(A)^2}
    }
  \ll
    A\p{
        \frac{f''(A)K}{f'(A)}
      +
        \frac{f''(A)}{f'(A)^2}
    }
.
\end{multline}
We still have to take care of the first and the last interval in~(\ref{eqn:interval_decomposition}).
To do this, we take any interval
$(a,b]$
such that $A\leq a\leq b\leq a+K\leq 2A$.
For all
$m\in \bigl(f(a),f(b)\bigr]$
we have
$\p{f^{-1}}'(m)=\frac 1{f'\p{f^{-1}(m)}}\leq \frac 1{f'(A)}$ since $f'$ is monotonic,
moreover $f(b)-f(a)+1\ll f'(A)K+1\ll f'(A)K$ by~(\ref{eqn:mvt_mon_1}) and the relation $f'(A)\geq f'(2)>0$,
and finally $b-a+1\ll K$. Therefore
\begin{multline}\label{eqn:endpieces}
    \abs{
        \sum_{a<n\leq b}{
          \varphi\p{\floor{f(n)}}
        }
      -
        \sum_{f(a)<m\leq f(b)}{
          \varphi(m)\p{f^{-1}}'(m)
        }
    }
\\
  \ll
    K+f'(A)K\frac 1{f'(A)}
  \ll K
.
\end{multline}
Combining
~(\ref{eqn:split}),
~(\ref{eqn:first_step}),
~(\ref{eqn:second_step}),
~(\ref{eqn:third_step}),
~(\ref{eqn:fourth_step}) and
~(\ref{eqn:endpieces}) we get
\begin{multline}\label{eqn:collection}\nonumber
    \abs{
        \sum_{A<n\leq 2A}{
          \varphi\p{\floor{f(n)}}
        }
      -
        \sum_{f(A)<m\leq f(2A)}{
          \varphi(m)\p{f^{-1}}'(m)
        }
    }
\\
  \ll
    A\left(
      f''(A)K^2+\frac 1R+\frac{\log K\log R}K+I(A,K)
    \right.
\\
    \left.
      +\frac{f''(A)K}{f'(A)}+\frac 1{f'(A)K}+\frac{f''(A)}{f'(A)^2}+\frac KA
    \right)
\end{multline}
for $A,K,R\geq 2$.
Since
$f'(A)\geq f'(2)\gg 1$,
the first term dominates the fifth and seventh terms
and the third term dominates the sixth.
Since $Af''(A)\gg 2f''(2)\gg 1$
by~(\ref{eqn:almost_monotone}), we have
$f''(A)\gg \frac 1A$,
and therefore the first term also dominates the last term.
We choose $R=A$. Then the third term dominates the second,
and the error is
\[
  \ll
    A\p{
      f''(A)K^2+\frac{(\log A)^2}K+I(A,K)
    }
.
\]
\end{proof}
\subsection{Proof of Theorem~\ref{thm_1}}
We want to find an estimate for~(\ref{eqn:essential_integral});
more precisely, we want to treat the expression
\[\sum_{a<n\leq b}\varphi\p{\floor{n\alpha+\beta}}\]
with the help of exponential sums.
To do this, we resort to a useful approximation of the
sawtooth function $x\mapsto \{x\}-\frac 12$ by trigonometric polynomials that was given by Vaaler.
(See~\cite[Theorem A.6]{GK91}.)
\begin{lem}\label{lem_vaaler}
Assume that $H$ is a
positive
integer.
There exist real numbers
$a_H(h)\in [0,1]$ for $1\leq\abs{h}\leq H$
such that
\begin{equation}\label{eqn:vaaler}
    \abs{\psi(t)-\psi_H(t)}
  \leq
    \kappa_H(t)
\end{equation}
for all real $t$, where
\[
\begin{aligned}
    \psi(x)&=\{x\}-\frac 12,\\
    \psi_H(t)&=-\frac{1}{2\pi i}\sum_{1\leq \abs{h}\leq H}\frac{a_H(h)}{h}\e(ht)\\
\end{aligned}
\]
and
\[
    \kappa_H(t)=\frac{1}{2(H+1)}\sum_{0\leq\abs{h}\leq H}\p{1-\frac{\abs{h}}{H+1}}\e(ht).
\]
\end{lem}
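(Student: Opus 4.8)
The statement is Vaaler's approximation theorem for the sawtooth function, quoted here from~\cite[Theorem~A.6]{GK91}; since no proof is given, I record the route I would take.

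\emph{Reduction.} The heart of the matter is to construct two trigonometric polynomials $\psi^{+}$ and $\psi^{-}$ of degree at most $H$ which sandwich the sawtooth, $\psi^{-}(t)\le\psi(t)\le\psi^{+}(t)$ for all real $t$, and whose difference is exactly twice the error kernel, $\psi^{+}-\psi^{-}=2\kappa_H$. Granting this, set $\psi_H=\tfrac12(\psi^{+}+\psi^{-})$. Then $\psi^{+}-\psi_H=\tfrac12(\psi^{+}-\psi^{-})=\kappa_H$ and likewise $\psi^{-}-\psi_H=-\kappa_H$, so the sandwich gives $-\kappa_H(t)\le\psi(t)-\psi_H(t)\le\kappa_H(t)$, which is~(\ref{eqn:vaaler}) since $\kappa_H$, being the multiple $\tfrac1{2(H+1)^2}\bigl(\sin\pi(H+1)t/\sin\pi t\bigr)^2$ of the Fej\'er kernel, is nonnegative. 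It remains to produce $\psi^{\pm}$ and to check that the Fourier coefficients of $\psi_H$ have the stated form.

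\emph{Construction of the one-sided approximations.} Observe that $\psi$ is the $1$-periodisation of the compactly supported function $g(x)=(x-\tfrac12)\mathbf 1_{[0,1)}(x)$, since $\sum_{n\in\dZ}\mathbf 1_{[0,1)}(x+n)=1$ forces $\sum_{n\in\dZ}g(x+n)=\{x\}-\tfrac12=\psi(x)$. I would then build entire one-sided approximations of $g$ and periodise them. The essential input is Beurling's extremal function: there is an entire function $B$ of exponential type $2\pi$ with $B(x)\ge\operatorname{sgn}(x)$ on $\dR$, with $B(x)-\operatorname{sgn}(x)=O(x^{-2})$ and $\int_\dR\bigl(B-\operatorname{sgn}\bigr)=1$, with Fourier transform supported in $[-1,1]$, and satisfying the identity $B(x)+B(-x)=2\bigl(\sin\pi x/\pi x\bigr)^2$. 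Using $B$ (rescaled by the factor $H+1$) together with the Fej\'er kernel, Vaaler's method produces entire functions $g^{+}$ and $g^{-}$ of exponential type $2\pi(H+1)$, with spectra in $[-(H+1),H+1]$, with $g^{-}\le g\le g^{+}$ on $\dR$, with $g^{\pm}-g\in L^1(\dR)$ having an $O(x^{-2})$ tail, and with
\[
  g^{+}(x)-g^{-}(x)
  =\Bigl(\frac{\sin\pi(H+1)x}{\pi(H+1)x}\Bigr)^2
  =\tfrac12\Bigl(B\bigl((H+1)x\bigr)+B\bigl(-(H+1)x\bigr)\Bigr).
\]
Now Poisson summation applies: the series $\psi^{\pm}(x)=\sum_{n\in\dZ}g^{\pm}(x+n)$ converge absolutely, have $\widehat{\psi^{\pm}}(h)=\widehat{g^{\pm}}(h)$, and hence are trigonometric polynomials, arranged in Vaaler's normalisation to have degree at most $H$. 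Periodising $g^{-}\le g\le g^{+}$ gives $\psi^{-}\le\psi\le\psi^{+}$, and periodising the displayed identity gives, via the partial-fraction formula $\sum_{n\in\dZ}(x+n)^{-2}=\bigl(\pi/\sin\pi x\bigr)^2$, exactly $\psi^{+}-\psi^{-}=2\kappa_H$.

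\emph{The coefficients, and the main obstacle.} Since $\psi$ has mean $0$ and Fourier coefficients $\widehat\psi(h)=-1/(2\pi i h)$ for $h\ne0$, and $\psi_H=\tfrac12(\psi^{+}+\psi^{-})$, the mean of $\psi_H$ is $\tfrac12\bigl(\int_0^1\psi^{+}+\int_0^1\psi^{-}\bigr)$, which vanishes because the oddness of $\psi$ forces $\psi^{-}(t)=-\psi^{+}(-t)$; and for $1\le\abs h\le H$ one reads off from the explicit Fourier transforms of $g^{\pm}$ that $\widehat{\psi_H}(h)=-a_H(h)/(2\pi i h)$ with $a_H(h)$ real. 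That $0\le a_H(h)\le1$ is exactly where the special structure of Beurling's function is used: because $B(x)+B(-x)$ is a nonnegative kernel of small mass, the majorant and minorant of $g$ differ by a one-sided object, which pins down the sign and the size of the corrections to the Fourier coefficients. The genuinely hard part of the whole argument is the construction of the extremal pair $g^{+},g^{-}$ itself --- entire of the prescribed exponential type, sandwiching $g=(x-\tfrac12)\mathbf 1_{[0,1)}$, with the Fej\'er-kernel difference and with Fourier coefficients yielding $a_H(h)\in[0,1]$; this is Vaaler's theorem proper, resting on Beurling's extremal majorant of the signum function and interpolation at the integers and half-integers. Everything else (the periodisation, the partial-fraction identity, and the final sandwich) is routine, and for the present purpose it suffices to cite the result from~\cite[Theorem~A.6]{GK91}.
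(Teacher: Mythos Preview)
The paper gives no proof of this lemma at all; it merely cites \cite[Theorem~A.6]{GK91} and moves on. Your proposal goes well beyond this by sketching Vaaler's actual construction via Beurling's extremal function and periodisation, and the outline is correct in its main lines: the sandwich reduction, the Poisson-summation passage from entire majorants/minorants of $(x-\tfrac12)\mathbf 1_{[0,1)}$ to trigonometric polynomials, and the identification of $\psi^{+}-\psi^{-}$ with $2\kappa_H$ via the partial-fraction expansion of $(\sin\pi x)^{-2}$ are all standard and accurate. You also correctly flag that the genuinely nontrivial step is the construction of the extremal pair with the stated spectral and coefficient properties, and that citing the result suffices here --- which is exactly what the paper does.
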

Note that $\kappa_H(t)$ is a nonnegative real number since for all $H$ we have
\[
    \sum_{0\leq \abs{h}<H}(H-\abs{h})e(hx)
  =
    \abs{\sum_{0\leq h<H}\e\p{hx}}^2
.
\]
Let $\alpha$ and $\beta$ be real numbers and suppose that $\alpha\geq 1$.
An elementary argument shows that for all integers $m$ we have
\begin{multline}\label{eqn:detection}
      \floor{-\frac{m-\beta}\alpha}
    -
      \floor{-\frac{m+1-\beta}\alpha}
\\
  =
    \left\{
      \begin{array}{ll}
            1
          &
            \text{if }m=\floor{n\alpha+\beta}
            \text{for some integer }n
        \\
            0
          &
            \text{otherwise}.
        \end{array}
    \right\}
.
\end{multline}
With the help of this characterization of the elements of a Beatty sequence we prove the following statement,
which allows us to deduce Theorem~\ref{thm_1} from Proposition~\ref{prp_1}.
\begin{prp}\label{prp_reduction}
Let $\varphi:\dN\ra\dC$ be a function bounded by $1$.
For all real $\alpha\geq 1$, $\beta\geq 0$, $K\geq 0$ and $H\geq 1$ we have
\begin{multline*}
    \abs{
        \sum_{0<n\leq K}\varphi\p{\floor{n\alpha+\beta}}
      -
        \frac 1\alpha\sum_{\beta<m\leq\beta+K\alpha}\varphi(m)
    }
\\
  \leq
      \sum_{1\leq \abs{h}\leq H}
      \min\left\{
          \frac 1\alpha
        ,
          \frac 1{\abs{h}}
      \right\}
      \abs{\sum_{\beta<m\leq\beta+K\alpha}\varphi(m)\e\p{-m\frac h\alpha}}
\\
    +
      \frac{1}{H}\sum_{0\leq \abs{h}\leq H}
      \abs{\sum_{\beta<m\leq\beta+K\alpha}\e\p{-m\frac h\alpha}}
    +
      O(1)
.
\end{multline*}
The implied constant is an absolute one.
\end{prp}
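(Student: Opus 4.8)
The plan is to use the detection formula~(\ref{eqn:detection}) to rewrite the first sum as a sum over $m$ weighted by a difference of two floor functions, then replace each floor by its sawtooth expansion via Lemma~\ref{lem_vaaler}, and finally collect the main term together with error terms governed by the trigonometric polynomials $\psi_H$ and $\kappa_H$. Concretely, by~(\ref{eqn:detection}) we have
\[
  \sum_{0<n\leq K}\varphi\p{\floor{n\alpha+\beta}}
  =
  \sum_{\beta<m\leq\beta+K\alpha}\varphi(m)
  \p{
    \floor{-\tfrac{m-\beta}{\alpha}}
    -
    \floor{-\tfrac{m+1-\beta}{\alpha}}
  }
  + O(1),
\]
where the $O(1)$ accounts for boundary effects at $m=\floor{\beta}$ or near $m=\beta+K\alpha$ (since a priori $n$ ranges over $0<n\leq K$, the corresponding $m$ lie in a window that may be off by one integer at each end). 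Writing $\floor{x}=x-\tfrac12-\psi(x)$, the difference of floors becomes $\tfrac 1\alpha - \psi\p{-\tfrac{m-\beta}{\alpha}} + \psi\p{-\tfrac{m+1-\beta}{\alpha}}$. Summing $\varphi(m)$ against the constant $\tfrac1\alpha$ produces exactly the second sum in the statement, so it remains to bound $\abs{\sum_{\beta<m\leq\beta+K\alpha}\varphi(m)\p{\psi(-\tfrac{m+1-\beta}{\alpha})-\psi(-\tfrac{m-\beta}{\alpha})}}$.

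Next I would insert Lemma~\ref{lem_vaaler} twice, once at $t=-\tfrac{m-\beta}{\alpha}$ and once at $t=-\tfrac{m+1-\beta}{\alpha}$. Replacing each $\psi$ by $\psi_H$ costs at most $\kappa_H(-\tfrac{m-\beta}{\alpha})+\kappa_H(-\tfrac{m+1-\beta}{\alpha})$ in absolute value, summed against $\abs{\varphi(m)}\leq 1$; since $\kappa_H\geq 0$ is the Fej\'er-type kernel $\tfrac{1}{2(H+1)}\sum_{\abs h\leq H}(1-\tfrac{\abs h}{H+1})\e(ht)$, summing it over the $m$-range and expanding gives the term
\[
  \frac{1}{H+1}\sum_{0\leq\abs h\leq H}\p{1-\frac{\abs h}{H+1}}
  \abs{\sum_{\beta<m\leq\beta+K\alpha}\e\p{-\tfrac{mh}{\alpha}}}
  \ll
  \frac 1H\sum_{0\leq\abs h\leq H}\abs{\sum_{\beta<m\leq\beta+K\alpha}\e\p{-\tfrac{mh}{\alpha}}},
\]
which is (up to the harmless $2$ in $\tfrac1{2(H+1)}$ and dropping the weight) the middle term in the conclusion. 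The main contribution is then $\abs{\sum_m\varphi(m)\p{\psi_H(-\tfrac{m+1-\beta}{\alpha})-\psi_H(-\tfrac{m-\beta}{\alpha})}}$. Expanding $\psi_H(t)=-\tfrac{1}{2\pi i}\sum_{1\leq\abs h\leq H}\tfrac{a_H(h)}{h}\e(ht)$, the $h$-th term picks up a factor $\e(-\tfrac{(m+1)h}{\alpha})-\e(-\tfrac{mh}{\alpha})=\e(-\tfrac{mh}{\alpha})\p{\e(-\tfrac h\alpha)-1}$, and $\abs{\e(-\tfrac h\alpha)-1}=2\abs{\sin(\pi h/\alpha)}$. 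So the main term is bounded by
\[
  \frac{1}{\pi}\sum_{1\leq\abs h\leq H}\frac{a_H(h)}{\abs h}\abs{\sin(\pi h/\alpha)}
  \abs{\sum_{\beta<m\leq\beta+K\alpha}\varphi(m)\e\p{-\tfrac{mh}{\alpha}}}.
\]

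The last step, and the one requiring a little care, is to verify that $\tfrac{1}{\pi}\tfrac{a_H(h)}{\abs h}\abs{\sin(\pi h/\alpha)}\leq \min\{\tfrac1\alpha,\tfrac1{\abs h}\}$, which is where the hypothesis $\alpha\geq 1$ enters. Since $a_H(h)\in[0,1]$, we have $\tfrac{1}{\pi}\tfrac{a_H(h)}{\abs h}\abs{\sin(\pi h/\alpha)}\leq \tfrac{1}{\pi\abs h}$, and using $\abs{\sin(\pi t)}\leq \pi\abs t$ with $t=h/\alpha$ gives $\tfrac{1}{\pi}\tfrac{a_H(h)}{\abs h}\abs{\sin(\pi h/\alpha)}\leq \tfrac{1}{\pi}\cdot\tfrac{1}{\abs h}\cdot\tfrac{\pi\abs h}{\alpha}=\tfrac1\alpha$; taking the minimum of the two bounds yields the claimed coefficient $\min\{\tfrac1\alpha,\tfrac1{\abs h}\}$. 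Collecting the three pieces — the main oscillatory sum over $h$, the Fej\'er-kernel error, and the $O(1)$ boundary term — gives exactly the inequality in Proposition~\ref{prp_reduction}. The only genuinely fiddly point I anticipate is bookkeeping the endpoint discrepancies between the ranges $0<n\leq K$ and $\beta<m\leq\beta+K\alpha$ (and the fact that $\beta\geq 0$, $K\geq 0$ are allowed to be non-integers), but since each such mismatch affects at most $O(1)$ values of $m$ and $\varphi$ is bounded by $1$, it is absorbed into the $O(1)$ term with an absolute implied constant.
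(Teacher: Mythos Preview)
Your proposal is correct and follows essentially the same route as the paper: detection formula~(\ref{eqn:detection}) to pass to a sum over $m$, splitting off the main term $\tfrac1\alpha\sum\varphi(m)$ via $\lfloor x\rfloor=x-\tfrac12-\psi(x)$, Vaaler's approximation to replace $\psi$ by $\psi_H$ at the cost of the Fej\'er-type kernel $\kappa_H$, and then bounding the $\psi_H$ contribution using $|a_H(h)|\leq 1$ together with $|\e(-h/\alpha)-1|\leq\min\{2,2\pi|h|/\alpha\}$ (your $|\sin(\pi h/\alpha)|$ formulation is the same thing). One small remark: the hypothesis $\alpha\geq 1$ is actually needed earlier, to ensure injectivity of $n\mapsto\lfloor n\alpha+\beta\rfloor$ so that~(\ref{eqn:detection}) applies, rather than in the coefficient bound (which holds for all $\alpha>0$).
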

\begin{proof}
We write $\psi(x)=\{x\}-\frac 12=x-\floor{x}-\frac 12$.
Since $\alpha\geq 1$, the function $n\mapsto\floor{n\alpha+\beta}$ is injective.
Using this fact and~(\ref{eqn:detection}), we see that
\begin{multline*}
    \sum_{0<n\leq K}
    \varphi\p{\floor{n\alpha+\beta}}
\\
  =
    \sum_{m\in\dZ}
    \varphi(m)\cdot
    \left\{
      \begin{array}{ll}
        1&m=\floor{n\alpha+\beta}\text{ for some }0<n\leq K\\
        0&\text{otherwise}
      \end{array}
    \right\}
\\
  =
    \sum_{\floor{\beta}<m\leq\floor{\beta+K\alpha}}
    \varphi(m)\cdot
    \left\{
      \begin{array}{ll}
        1&m=\floor{n\alpha+\beta}\text{ for some }n\\
        0&\text{otherwise}
      \end{array}
    \right\}
\\
  =
    \sum_{\floor{\beta}<m\leq\floor{\beta+K\alpha}}
      \varphi(m)
      \p{
          \floor{-\frac{m-\beta}{\alpha}}
        -
          \floor{-\frac{m+1-\beta}{\alpha}}
      }
\\
  =
      \frac 1\alpha \sum_{\beta<m\leq\beta+K\alpha}\varphi(m)
\\
    +
      \sum_{\beta<m\leq\beta+K\alpha}
      \varphi(m)
      \p{
          \psi\p{-\frac{m+1-\beta}{\alpha}}
        -
          \psi\p{-\frac{m-\beta}{\alpha}}
      }
    +
      O(1)
.
\end{multline*}
It remains to treat the second sum.
For brevity, write
\[
L=\{m\in\dZ:\beta<m\leq\beta+K\alpha\}
.
\]
Let $H\geq 1$ be an integer.
For each $m$ we replace $\psi$ by $\psi_H$ with the help of~(\ref{eqn:vaaler}) to get
\begin{multline*}
  \left|
    \sum_{m\in L}{
      \varphi(m)
      \p{
          \psi\p{
            -\frac{m+1-\beta}{\alpha}-\gamma
          }
        -
          \psi\p{
            -\frac{m-\beta}{\alpha}-\gamma
          }
      }
    }
  \right.
\\
  -
  \left.
    \frac {-1}{2\pi i}
      \sum_{m\in L}{
        \varphi(m)
        \sum_{1\leq\abs{h}\leq H}{
          \frac{a_H(h)}{h}
          \p{
              \e\p{-h\frac{m+1-\beta}{\alpha}}
            -
              \e\p{-h\frac{m-\beta}{\alpha}}
          }
        }
      }
  \right|
\\
  \leq
    \frac{1}{2H+2}
    \sum_{m\in L}
    \sum_{\abs{h}\leq H}
      \p{1-\frac{\abs{h}}{H+1}}
      \p{
          \e\p{-h\frac{m+1-\beta}{\alpha}}
        +
          \e\p{-h\frac{m-\beta}{\alpha}}
      }
\\
  \leq
    \frac{1}{H+1}
    \sum_{0\leq \abs{h}\leq H}{
      \abs{
        \sum_{m\in L}{
          \e\p{-h\frac{m}{\alpha}}
        }
      }
    }
.
\end{multline*}
Finally we use the inequalities $\abs{a_H(h)}\leq 1$ and $\abs{\e(x)-1}\leq\min\{2, 2\pi x\}$ to calculate:
\begin{multline*}
    \abs{
      \frac 1{2\pi i}
      \sum_{m\in L}{
        \varphi(m)
        \sum_{1\leq\abs{h}\leq H}{
          \frac{a_H(h)}{h}
          \p{
              \e\p{-h\frac{m+1-\beta}{\alpha}}
            -
              \e\p{-h\frac{m-\beta}{\alpha}}
          }
        }
      }
    }
\\
  =
    \abs{
      \frac 1{2\pi}
      \sum_{1\leq\abs{h}\leq H}{
        \frac{a_H(h)}{h}
        \e\p{-\frac{\beta}\alpha}
        \p{\e\p{-\frac{h}\alpha}-1}
        \sum_{m\in L}{
          \varphi(m)
          \p{-h\frac m\alpha}
        }
      }
    }
\\
  \leq
    \sum_{1\leq\abs{h}\leq H}{
      \min\left\{
          \frac 1\alpha
        ,
          \frac 1{\abs{h}}
      \right\}
      \abs{
        \sum_{m\in L}{
          \varphi(m)
          \p{-h\frac m\alpha}
        }
      }
    }
.
\end{multline*}
If $H\geq 1$ is a real number, we apply these calculations to $\floor{H}$.
Note that in this process the summations over $h$ remain unchanged and that $1/(\floor{H}+1)\leq 1/H$,
therefore the assertion follows.
\end{proof}
We will use the following standard lemma
to extend the range of a summation in exchange for a controllable factor.
\begin{lem}\label{lem_vinogradov}
Let $x\leq y\leq z$ be real numbers and $a_n\in\dC$ for $x<n\leq z$.
Then
\begin{equation*}
    \abs{\sum_{x<n\leq y}a_n}
  \leq
    \int_0^1{
      \min\left\{y-x+1,\Abs{\xi}^{-1}\right\}
      \abs{\sum_{x<n\leq z}a_n\e\p{n\xi}}
    }
    \ud \xi
.
\end{equation*}
\end{lem}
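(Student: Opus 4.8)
The plan is to write the partial sum $\sum_{x<n\leq y}a_n$ as a Fourier integral of the complete sum $\sum_{x<n\leq z}a_n\e(n\xi)$ against a Dirichlet-type kernel, using the orthogonality relation $\int_0^1\e(k\xi)\ud\xi=1$ for $k=0$ and $\int_0^1\e(k\xi)\ud\xi=0$ for $k\in\dZ\setminus\{0\}$, and then to bound that kernel uniformly in $\xi$.

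First I would observe that every sum in the statement ranges over integers, since $a_n$ is only defined for integers $n\in(x,z]$. For such an integer $n$ the indicator of the condition $x<n\leq y$ equals $\sum_{x<m\leq y}\int_0^1\e\p{(n-m)\xi}\ud\xi$, the sum being over integers $m$, because the integral contributes $1$ precisely when $m=n$ and $0$ for every other integer $m$. Multiplying by $a_n$, summing over the integers in $(x,z]$, and interchanging the two finite sums with the integral gives the identity
\[
  \sum_{x<n\leq y}a_n
  =
  \int_0^1\p{\sum_{x<n\leq z}a_n\e(n\xi)}\p{\sum_{x<m\leq y}\e(-m\xi)}\ud\xi .
\]
Bounding the modulus of the integral by the integral of the modulus then yields
\[
  \abs{\sum_{x<n\leq y}a_n}
  \leq
  \int_0^1\abs{\sum_{x<n\leq z}a_n\e(n\xi)}\,\abs{\sum_{x<m\leq y}\e(-m\xi)}\ud\xi .
\]

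It remains to establish the uniform estimate $\abs{\sum_{x<m\leq y}\e(-m\xi)}\leq\min\{y-x+1,\Abs{\xi}^{-1}\}$. The number of integers $m$ with $x<m\leq y$ is $\floor{y}-\floor{x}\leq y-x+1$, which gives the first bound by the triangle inequality (and if there are no such integers, the left-hand side of the lemma already vanishes). For the second bound I would write those integers as $m_0+1,\ldots,m_0+N$ with $m_0=\floor{x}$ and $N\geq 1$, so that the sum is geometric with ratio $\e(-\xi)$; when $\xi\notin\dZ$ its modulus is at most $\abs{\e(-N\xi)-1}/\abs{\e(-\xi)-1}\leq 2/\abs{\e(-\xi)-1}=1/\abs{\sin(\pi\xi)}$, while when $\xi\in\dZ$ one has $\Abs{\xi}=0$ and $\min\{y-x+1,\Abs{\xi}^{-1}\}=y-x+1$, so the claim holds trivially. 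Since $\abs{\sin(\pi\xi)}\geq 2\Abs{\xi}$ — the graph of the sine lies above its chord on $[0,\tfrac12]$, and both functions are even and $1$-periodic — the quantity $1/\abs{\sin(\pi\xi)}$ is at most $\Abs{\xi}^{-1}$. Inserting the resulting bound for $\abs{\sum_{x<m\leq y}\e(-m\xi)}$ into the last display completes the proof.

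There is no genuine obstacle here; this is a standard ``completing the sum'' argument. The only point requiring a little care is the uniformity in the real endpoints $x$ and $y$: all of the dependence on them is isolated in the elementary geometric sum $\sum_{x<m\leq y}\e(-m\xi)$, whose size is governed solely by the number of integers in the interval and by the distance from $\xi$ to the nearest integer.
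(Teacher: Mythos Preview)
Your proof is correct and follows exactly the same approach as the paper: both use the orthogonality relation $\int_0^1\e(k\xi)\ud\xi=\delta_{k,0}$ to write $\sum_{x<n\leq y}a_n=\int_0^1\bigl(\sum_{x<m\leq y}\e(-m\xi)\bigr)\bigl(\sum_{x<n\leq z}a_n\e(n\xi)\bigr)\ud\xi$ and then take absolute values. The paper simply says ``from which the statement follows'' at that point, whereas you have spelled out the standard bound $\abs{\sum_{x<m\leq y}\e(-m\xi)}\leq\min\{y-x+1,\Abs{\xi}^{-1}\}$ for the geometric sum.
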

\begin{proof}
Since
$\int_0^1\e\p{k\xi}\ud \xi=\delta_{k,0}$ for $k\in\dZ$
it follows that
\begin{equation*}
    \sum_{x<n\leq y}a_n
  =
    \sum_{x<n\leq z}a_n\sum_{x<m\leq y}\delta_{n-m,0}
  =
    \int_0^1
    \sum_{x<m\leq y}\e\p{-m\xi}
    \sum_{x<n\leq z}a_n\e\p{n\xi}
    \ud \xi,
\end{equation*}
from which the statement follows.
\end{proof}
Finally, to obtain the correct error term in the theorem,
we will use the following lower bound on the $L^1$-norm of an exponential sum.
\begin{lem}\label{lem_lower_bound}
Let $a<b$ be real numbers and $x_m$ a complex number for $a<m\leq b$.
Then
\[
  \int_0^1
  \abs{
    \sum_{a<m\leq b}
      x_m\e\p{m\theta}
  }
  \ud \theta
  \geq
  \max_{a<m\leq b}
  \abs{x_m}
.
\]
\end{lem}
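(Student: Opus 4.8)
The statement to prove is Lemma~\ref{lem_lower_bound}: for real $a<b$ and complex numbers $x_m$ indexed by $a<m\leq b$,
\[
  \int_0^1\abs{\sum_{a<m\leq b}x_m\e\p{m\theta}}\ud\theta\geq\max_{a<m\leq b}\abs{x_m}.
\]
The plan is to exploit the orthogonality relation $\int_0^1\e(k\theta)\ud\theta=\delta_{k,0}$ for integers $k$, exactly as in the proof of Lemma~\ref{lem_vinogradov}. Fix an index $m_0$ with $a<m_0\leq b$; I want to isolate the coefficient $x_{m_0}$ by integrating the sum against $\e(-m_0\theta)$.

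First I would observe that, since all the indices $m$ in the sum are integers (they range over $a<m\leq b$ with $m\in\dZ$, as the summation convention of the paper dictates), for each such $m$ we have $\int_0^1\e\p{(m-m_0)\theta}\ud\theta=\delta_{m,m_0}$. Therefore
\[
  \int_0^1\e(-m_0\theta)\sum_{a<m\leq b}x_m\e\p{m\theta}\ud\theta
  =\sum_{a<m\leq b}x_m\int_0^1\e\p{(m-m_0)\theta}\ud\theta
  =x_{m_0}.
\]
Next I would take absolute values and pull the modulus inside the integral, using $\abs{\e(-m_0\theta)}=1$:
\[
  \abs{x_{m_0}}
  =\abs{\int_0^1\e(-m_0\theta)\sum_{a<m\leq b}x_m\e\p{m\theta}\ud\theta}
  \leq\int_0^1\abs{\sum_{a<m\leq b}x_m\e\p{m\theta}}\ud\theta.
\]
Since $m_0$ was an arbitrary index in the range, taking the maximum over all such $m_0$ yields the claimed inequality.

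There is really no main obstacle here — the argument is a one-line application of orthogonality plus the triangle inequality for integrals. The only thing to be careful about is the bookkeeping: confirming that the sum is genuinely over integers (so that the orthogonality relation applies to $m-m_0\in\dZ$), and handling the trivial degenerate case where the index range $a<m\leq b$ contains no integers at all, in which case the maximum on the right is an empty maximum and the statement holds vacuously (or one simply restricts to the nontrivial case). Apart from this, the proof is immediate.
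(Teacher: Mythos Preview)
Your proof is correct and is essentially identical to the paper's own argument: both multiply the sum by $\e(-m_0\theta)$ (which has modulus $1$), apply orthogonality $\int_0^1\e((m-m_0)\theta)\ud\theta=\delta_{m,m_0}$ to isolate $x_{m_0}$, and use the triangle inequality $\abs{\int f}\leq\int\abs{f}$. The only cosmetic difference is that the paper first moves $\e(-m_0\theta)$ inside the absolute value and then integrates, whereas you integrate first and then take absolute values.
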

\begin{proof}
For $a<n\leq b$ we have
\begin{multline*}
  \int_0^1
  \abs{
    \sum_{a<m\leq b}
      x_m\e\p{m\theta}
  }
  \ud \theta
  =
  \int_0^1
  \abs{
    \sum_{a<m\leq b}
      x_m\e\p{(m-n)\theta}
  }
  \ud \theta
\\
  \geq
  \abs{
    \sum_{a<m\leq b}
    x_m
    \int_0^1
      \e\p{(m-n)\theta}
    \ud \theta
  }
=x_n.
\end{multline*}
\end{proof}
\begin{proof}[Proof of Theorem~\ref{thm_1}]
Note first that by~(\ref{eqn:df_x}) we have $f'(x)\ra\infty$,
therefore there exists $A_0\geq 2$ such that
$f'(A)\geq 1$ for $A\geq A_0$.
Let $z>0$.
By an argument similar to that at the beginning of the proof of Proposition~\ref{prp_1}
we may restrict ourselves to the case that $z\leq A\,f'(A)$.
Also, we may assume that there exists an $m$ in the range $f(A)<m\leq f(2A)+z$ such that $\abs{\varphi(m)}=1$,
since the general case follows from this one by rescaling both sides of~(\ref{eqn:expsum_conclusion}).
To see this, we note that $A\geq 2$ is an integer and $f'(x)\geq 1$ for all $x\geq A$ and therefore
the relation~(\ref{eqn:expsum_conclusion}) only depends on integers $m$ in the range $f(A)<m\leq f(2A)+z$.
By Lemma~\ref{lem_lower_bound}, this restriction implies
\begin{multline}\label{eqn:J_lower_bound}
    \int_0^1
      \sup_{f(A)<x\leq f(2A)}
      \abs{
        \sum_{x<m\leq x+z}
        \varphi(m)\e\p{m\theta}
      }
    \ud \theta
\\
  \geq
    \sup_{f(A)<x\leq f(2A)}
    \int_0^1
      \abs{
        \sum_{x<m\leq x+z}
        \varphi(m)\e\p{m\theta}
      }
    \ud \theta
  \geq
    \sup_{f(A)<x\leq f(2A)}
    \sup_{x<m\leq x+z}
    \abs{\varphi(m)}
\\
  =
    \sup_{f(A)<m\leq f(2A)+z}
    \abs{\varphi(m)}
  \geq
    1
.
\end{multline}
If $z<\max\{2,f'(2A)\}$, this lower bound implies
$f'(A)(\log A)^3J(A,z)\gg 1$ and since by Lemma~\ref{lem_lhs_bounded} the left hand
side of~(\ref{eqn:expsum_conclusion}) is bounded, this proves the assertion in this case.
For the remaining part of the proof we assume therefore that
$\max\{2,f'(2A)\}\leq z\leq A\,f'(A)$.
Moreover, we assume throughout that
$1\leq K\leq A$ and that $H\geq 2$.
We want to apply Proposition~\ref{prp_1} and therefore we have to find an estimate for $I(A,K)$.
We apply Proposition~\ref{prp_reduction} to the expression in the absolute value in equation~(\ref{eqn:essential_integral}),
which is possible since $\alpha\geq f'(A)\geq 1$ for all $\alpha$ in question,
and obtain the estimate
\begin{multline}\label{eqn:reduction_applied}
      I(A,K)
    \ll
      \frac 1{f'(2A)-f'(A)}
      \frac 1K
      \int_{f'(A)}^{f'(2A)}
      \left(
          \sum_{1\leq \abs{h}\leq H}{
            \min\left\{
                \frac 1\alpha
              ,
                \frac 1{\abs{h}}
            \right\}
            S_1(\alpha,h)
          }
      \right.
\\
      \left.
        \vphantom{
          \sum_{1\leq \abs{h}\leq H}{
            \min\left\{
                \frac 1\alpha
              ,
                \frac 1{\abs{h}}
            \right\}
          }
        }
        +
          \frac 1H
          S_2(\alpha,0)
        +
          \frac 1H
          \sum_{1\leq \abs{h}\leq H}{
            S_2(\alpha,h)
          }
        +
          O(1)
      \right)
      \ud\alpha
,
\end{multline}
where
\begin{equation*}
      S_1(\alpha,h)
    =
      \sup_{
            f(A)<x\leq f(2A)
      }
      \abs{
        \sum_{x<m\leq x+K\alpha}{
          \varphi(m)\e\p{-m\frac h\alpha}
        }
      }
\end{equation*}
and
\begin{equation*}
      S_2(\alpha,h)
    =
      \sup_{
            f(A)<x\leq f(2A)
      }
      \abs{\sum_{x<m\leq x+K\alpha}\e\p{-m\frac h\alpha}}
.
\end{equation*}
The four summands in~(\ref{eqn:reduction_applied}) are arranged according to their importance.
We estimate them in the order of increasing importance, the treatment of the fourth term being trivial:
\begin{equation}\label{eqn:thm_first_step}
      \int_{f'(A)}^{f'(2A)}{
        O(1)
      }
      \ud\alpha
    \ll
      f'(2A)-f'(A)
.
\end{equation}
To estimate the third term, it is sufficient to consider the sum over $1\leq h\leq H$, since
$S_2(\alpha,-h)=S_2(\alpha,h)$.
We interchange the integration and the summation and substitute
$\theta=-\frac h\alpha$ to obtain
\begin{multline*}
    \int_{f'(A)}^{f'(2A)}{
      \frac 1H
      \sum_{1\leq h\leq H}{
        S_2(\alpha,h)
      }
    }
    \ud \alpha
\\
  \ll
    \frac 1{H}
    \sum_{1\leq h\leq H}{
      h
      \int_{-\frac h{f'(A)}}^{-\frac h{f'(2A)}}{
        \frac 1{\theta^2}
        \min\left\{
          f'(2A)K+1,\Abs{\theta}^{-1}
        \right\}
      }
      \ud \theta
    }
.
\end{multline*}
We note some simple estimates before applying Lemma~\ref{lem_integral_1}.
We have $0<f'(1)\leq f'(2A)\ll A^{\delta}$
for some $\delta\geq 0$ since $f'$ is monotone and by~(\ref{eqn:df_x}), and therefore
$f'(2A)K+1\ll A^{\delta+1}$.
By~(\ref{eqn:df_quotient}) we have
$0<-\frac 1\theta\leq\frac{f'(2A)}{h}\ll\frac{f'(A)}h$
for all $\theta$ under consideration.
Moreover, the length of the integration range
is
$
    \frac h{f'(A)}-\frac h{f'(2A)}
  \leq
    \frac h{f'(A)}
$
and finally from~(\ref{eqn:df_d2f}) and~(\ref{eqn:mvt_mon_2}) it follows that
$f'(A)\ll (f'(2A)-f'(A))\log A$.
Hence Lemma~\ref{lem_integral_1} gives
\begin{multline}\label{eqn:thm_second_step}
    \int_{f'(A)}^{f'(2A)}{
      \frac 1H
      \sum_{1\leq h\leq H}{
        S_2(\alpha,h)
      }
    }
    \ud\alpha
\\
  \ll
    f'(A)
    \frac 1H
    \sum_{1\leq h\leq H}{
      \frac {f'(A)}h
      \p{
          \frac h{f'(A)}
        +
          1
      }
      \p{1+\log A^{\delta+1} }
    }
\\
  \ll
      f'(A)
      \p{
          1
        +
          \frac{f'(A)\log H}{H}
      }
      \log A
\\
  \ll
      \bigl(f'(2A)-f'(A)\bigr)(\log A)^2
      \p{
        1+\frac{f'(A)\log H}{H}
      }
.
\end{multline}
The contribution of the second term in~(\ref{eqn:reduction_applied}) is
easily determined: the sum occurring in the definition of $S_2$ comprises not more than
$f'(2A)K+1\ll f'(A)K$ summands, therefore
\begin{equation}\label{eqn:thm_third_step}
    \int_{f'(A)}^{f'(2A)}\frac 1H S_2(\alpha,0)\ud \alpha
  \ll
    (f'(2A)-f'(A))K\frac{f'(A)}H
.
\end{equation}
Now we turn to the the treatment of the main term in~(\ref{eqn:reduction_applied}).
We concentrate on the case that $h>0$.
We exchange the integral and the sum and apply the substitution
$-\frac h\alpha=\theta$.
The factor $\min\left\{\frac 1\alpha,\frac 1h\right\}$ then transforms into
$\min\left\{-\frac 1\theta,\frac 1{\theta^2}\right\}$, which
is
$\ll \frac{f'(A)}h\min\left\{1,\frac{f'(A)}h\right\}$ by~(\ref{eqn:df_quotient}).
We obtain
\begin{multline*}
    \int_{f'(A)}^{f'(2A)}{
      \sum_{1\leq h\leq H}{
        \min\left\{\frac 1\alpha,\frac 1h\right\}
        S_1(\alpha,h)
      }
    }
    \ud\alpha
\\
  \ll
    f'(A)
    \sum_{1\leq h\leq H}{
      \frac 1h
      \min\left\{1,\frac{f'(A)}h\right\}
      \int_{-\frac h{f'(A)}}^{-\frac h{f'(2A)}}{
        S_1\p{-\frac h\theta,h}
      }
      \ud \theta
    }
\end{multline*}
and to estimate the integral we use Lemma~\ref{lem_vinogradov}:
\begin{multline*}
      \int_{-\frac h{f'(A)}}^{-\frac h{f'(2A)}}{
        S_1\p{-\frac h\theta,h}
      }
      \ud \theta
    \ll
      \int_0^1
        \min\left\{
            f'(2A)K+1
          ,
            \Abs{\xi}^{-1}
        \right\}
\\
    \times
      \int_{\xi-\frac h{f'(A)}}^{\xi-\frac h{f'(2A)}}
        \sup_{f(A)<x\leq f(2A)}
        \abs{
          \sum_{x<m\leq x+f'(2A)K}
          \varphi(m)
          \e\p{m\theta}
        }
      \ud \theta
      \ud \xi
.
\end{multline*}
The length of integration of the inner integral is bounded trivially by $\frac h{f'(A)}$ and the integrand is $1$-periodic,
so that we may replace this integral, using the definition~(\ref{eqn:expsum_integral}) of $J$, by the upper bound
\[
\p{\frac h{f'(A)}+1}
f'(2A)K\,
J\bigl(A,f'(2A)K\bigr)
,
\]
which is independent of $\xi$.
We use the estimate $f'(2A)K+1\ll A^{\delta+1}$ which we mentioned before and
Lemma~\ref{lem_integral_1} to obtain
\[
      \int_0^1
        \min\left\{
            f'(2A)K+1
          ,
            \Abs{\xi}^{-1}
        \right\}
      \ud \xi
    \ll
      \log A
.
\]
Splitting the summation over $h$ at $f'(A)$ we get
\begin{multline*}
    \sum_{1\leq h\leq H}{
      \frac 1h
      \min\left\{1,\frac{f'(A)}{h}\right\}
    }
    \p{
        \frac h{f'(A)}
      +
        1
    }
\\
  \ll
      \sum_{1\leq h\leq f'(A)}
        \frac 1h
    +
      \sum_{f'(A)<h\leq H}
        \frac 1h
        \frac {f'(A)}{h}
        \frac h{f'(A)}
  \ll
    \sum_{1\leq h\leq H}
      \frac 1h
  \ll
    \log H
.
\end{multline*}
Collecting the terms and using the estimate $f'(2A)\ll \bigl(f'(2A)-f'(A)\bigr)\log A$, which follows from Lemma~\ref{lem_properties_f}, we get
\begin{multline}\label{eqn:thm_fourth_step}
    \int_{f'(A)}^{f'(2A)}{
      \sum_{1\leq h\leq H}{
        \min\left\{\frac 1\alpha,\frac 1h\right\}
        S_1(\alpha,h)
      }
    }
    \ud \alpha
\\
  \ll
    f'(A)
    \bigl(f'(2A)-f'(A)\bigr)K
    (\log A)^2\log H\,
    J\bigl(A,f'(2A)K\bigr)
.
\end{multline}
By analogous reasoning
the sum over $-H\leq h\leq -1$ can be estimated by the same expression.
We choose
\[H=z\quad\text{and}\quad K=\frac{z}{f'(2A)}.\]
By the restrictions $\max\{2,f'(2A)\}\leq z\leq A\,f'(A)$ it easily follows that
$1\leq K\leq A$ and that $H\geq 2$, therefore this is an admissible choice.
Note also that $\log H\ll \log A$ by~(\ref{eqn:df_x}).
We combine
~(\ref{eqn:reduction_applied}),
~(\ref{eqn:thm_first_step}),
~(\ref{eqn:thm_second_step}),
~(\ref{eqn:thm_third_step}) and
~(\ref{eqn:thm_fourth_step})
to get the estimate
\begin{equation*}\label{eqn:connection}
    I\left(A,\frac{z}{f'(2A)}\right)
  \ll
      \frac {f'(A)(\log A)^3}{z}
    +
      f'(A)(\log A)^3J(A,z)
.
\end{equation*}
Applying Proposition~\ref{prp_1} we see that the left hand side of
~(\ref{eqn:expsum_conclusion}) is bounded by a constant times
\begin{equation*}\label{eqn:temp_error_term}
  \frac{f''(A)}{f'(A)^2}z^2
+
  \frac{f'(A)(\log A)^3}z
+
  f'(A)(\log A)^3J(A,z).
\end{equation*}
By~(\ref{eqn:J_lower_bound}) the second term in this expression is dominated by the third,
which completes the proof.
\end{proof}
\subsection*{Acknowledgements}
The author is grateful to Michael Drmota for helpful discussions
and to Jo\"el Rivat for suggesting numerous improvements of this article.
Moreover, the author would like to thank the anonymous referee for
carefully reading the paper.

The author was supported by the Austrian Science Fund (FWF), grants P22388 and P24725,
and by the Agence Nationale de la Recherche, grant ANR-10-BLAN 0103 MUNUM.
Moreover, the author acknowledges support by project F5502-N26 (FWF), which is a part of the Special Research Program ``Quasi Monte Carlo Methods: Theory and Applications''.




\bibliographystyle{siam}
\bibliography{bibliography}
\end{document}